\begin{document}

\newtheorem{theorem}{Theorem}    

\newtheorem{proposition}[theorem]{Proposition}
\newtheorem{conjecture}[theorem]{Conjecture}
\def\theconjecture{\unskip}
\newtheorem{corollary}[theorem]{Corollary}
\newtheorem{lemma}[theorem]{Lemma}
\newtheorem{sublemma}[theorem]{Sublemma}
\theoremstyle{definition}
\newtheorem{observation}[theorem]{Observation}
\newtheorem{remark}[theorem]{Remark}
\newtheorem{definition}[theorem]{Definition}
\newtheorem{notation}[theorem]{Notation}
\newtheorem{question}[theorem]{Question}
\newtheorem{questions}[theorem]{Questions}
\newtheorem{example}[theorem]{Example}
\newtheorem{problem}[theorem]{Problem}
\newtheorem{exercise}[theorem]{Exercise}

\numberwithin{theorem}{section} \numberwithin{theorem}{section}
\numberwithin{equation}{section}

\def\earrow{{\mathbf e}}
\def\rarrow{{\mathbf r}}
\def\uarrow{{\mathbf u}}
\def\varrow{{\mathbf V}}
\def\tpar{T_{\rm par}}
\def\apar{A_{\rm par}}

\def\reals{{\mathbb R}}
\def\torus{{\mathbb T}}
\def\heis{{\mathbb H}}
\def\integers{{\mathbb Z}}
\def\naturals{{\mathbb N}}
\def\complex{{\mathbb C}\/}
\def\distance{\operatorname{distance}\,}
\def\support{\operatorname{support}\,}
\def\dist{\operatorname{dist}\,}
\def\Span{\operatorname{span}\,}
\def\degree{\operatorname{degree}\,}
\def\kernel{\operatorname{kernel}\,}
\def\dim{\operatorname{dim}\,}
\def\codim{\operatorname{codim}}
\def\trace{\operatorname{trace\,}}
\def\Span{\operatorname{span}\,}
\def\dimension{\operatorname{dimension}\,}
\def\codimension{\operatorname{codimension}\,}
\def\nullspace{\scriptk}
\def\kernel{\operatorname{Ker}}
\def\ZZ{ {\mathbb Z} }
\def\p{\partial}
\def\rp{{ ^{-1} }}
\def\Re{\operatorname{Re\,} }
\def\Im{\operatorname{Im\,} }
\def\ov{\overline}
\def\eps{\varepsilon}
\def\lt{L^2}
\def\diver{\operatorname{div}}
\def\curl{\operatorname{curl}}
\def\etta{\eta}
\newcommand{\norm}[1]{ \|  #1 \|}
\def\expect{\mathbb E}
\def\bull{$\bullet$\ }
\def\C{\mathbb{C}}
\def\R{\mathbb{R}}
\def\Rd{{\mathbb{R}^d}}
\def\ep{\varepsilon}
\def\2{\frac{1}{2}}
\def\p{\frac{1}{p}}
\def\Sn{{{S}^{n-1}}}
\def\M{\mathbb{M}}
\def\N{\mathbb{N}}
\def\Q{{\mathbb{Q}}}
\def\Z{\mathbb{Z}}
\def\F{\mathcal{F}}

\def\S{\mathcal{S}}
\def\supp{\operatorname{supp}}
\def\dist{\operatorname{dist}}
\def\essi{\operatornamewithlimits{ess\,inf}}
\def\esss{\operatornamewithlimits{ess\,sup}}
\def\xone{x_1}
\def\xtwo{x_2}
\def\xq{x_2+x_1^2}
\def\Oz{\Omega}
\def\oz{\omega}
\newcommand{\abr}[1]{ \langle  #1 \rangle}

\def\Xint#1{\mathchoice
{\XXint\displaystyle\textstyle{#1}}%
{\XXint\textstyle\scriptstyle{#1}}%
{\XXint\scriptstyle\scriptscriptstyle{#1}}%
{\XXint\scriptscriptstyle\scriptscriptstyle{#1}}%
\!\int}
\def\XXint#1#2#3{{\setbox0=\hbox{$#1{#2#3}{\int}$ }
\vcenter{\hbox{$#2#3$ }}\kern-.6\wd0}}
\def\ddashint{\Xint=}

\newcommand{\ave}[1]{\langle #1\rangle}
\newcommand{\Norm}[1]{ \left\|  #1 \right\| }
\newcommand{\abs}[1]{ \left|  #1 \right| }
\newcommand{\set}[1]{ \left\{ #1 \right\} }
\def\one{\mathbf 1}
\def\whole{\mathbf V}
\newcommand{\modulo}[2]{[#1]_{#2}}
\renewcommand{\thefootnote}{\fnsymbol{footnote}}
\def\scriptf{{\mathcal F}}
\def\scriptg{{\mathcal G}}
\def\scriptm{{\mathcal M}}
\def\scriptb{{\mathcal B}}
\def\scriptc{{\mathcal C}}
\def\scriptt{{\mathcal T}}
\def\scripti{{\mathcal I}}
\def\scripte{{\mathcal E}}
\def\scriptv{{\mathcal V}}
\def\scriptw{{\mathcal W}}
\def\scriptu{{\mathcal U}}
\def\scriptS{{\mathcal S}}
\def\scripta{{\mathcal A}}
\def\scriptr{{\mathcal R}}
\def\scripto{{\mathcal O}}
\def\scripth{{\mathcal H}}
\def\scriptd{{\mathcal D}}
\def\scriptl{{\mathcal L}}
\def\scriptn{{\mathcal N}}
\def\scriptp{{\mathcal P}}
\def\scriptk{{\mathcal K}}
\def\frakv{{\mathfrak V}}

\newcommand{\B}{\mathbb B}
\newcommand{\T}{\mathbb T}

\allowdisplaybreaks

\arraycolsep=1pt
%
\newtheorem*{remark0}{\indent\sc Remark}
%
\renewcommand{\proofname}{Proof.} 

\title[Commutators of fractional integrals]
{Schatten properties of commutators of fractional integrals on spaces of homogeneous type}


\author[Hyt\"onen \& Wu]{Tuomas Hyt\"onen and Lin Wu$^*$}



%
\subjclass{42B20, 42B35}
\keywords{Commutator, fractional integral, Schatten class, space of homogenous type, Besov space, fractional Sobolev space, Bessel operator, heat kernel}
\thanks{T.H. was supported by the Research Council of Finland (project no.\ 364208). L.W. is supported by China Scholarship Council (Grant No. 202406310137)}
\thanks{$^*$Corresponding author.}

\address{Aalto University, Department of Mathematics and Systems Analysis, P.O. Box 11100, FI-00076 Aalto, Finland} \email{tuomas.hytonen@aalto.fi}

\address{Xiamen University, School of Mathematical Sciences, Xiamen 361005, China} \email{wulin@stu.xmu.edu.cn}



\begin{abstract}

Extending classical results of Janson and Peetre (1988) on the Schatten class $S^p$ membership of commutators of Riesz potentials on the Euclidean space, we obtain analogous results for commutators $[b,T]$, 
where $T\in\{T_\ep,\widetilde T_\alpha\}$ belongs to either one of two natural classes of fractional integral operators on a space of homogeneous type.
Our approach is based on recent related work of Hyt\"{o}nen and Korte on singular (instead of fractional) integrals; working directly with the kernels, it differs from the Fourier analytic considerations of Janson and Peetre, covering new operators even when specialised to $\mathbb R^d$.



The cleanest case of our characterization in spaces of lower dimension $d> 2$ and satisfying a $(1,2)$-Poincar\'e inequality is as follows. For a parameter $\ep \in (0,\frac12-\frac{1}{d})$ describing the order of the fractional integral $T_\eps$, we have a dichotomy: If $\frac{d}{1+d\ep }<p<\frac{1}{\eps}$, then $[b,T_{\ep}]\in S^p$ if and only if $b$ belongs to a suitable Besov (or fractional Sobolev) space. If $0<p\leq \frac{d}{1+d\ep }$, then $[b,T_{\ep}]\in S^p$ if and only if $b$ is constant. This is analogous to the result for singular integrals, where a similar cut-off happens at $p=d$, formally corresponding to fractional order $\eps=0$. We also obtain results for other parameter values, including dimensions $0<d\leq 2$.

As an application, these results are used to show Schatten properties of commutators of fractional Bessel operators, complementing recent related results of Fan, Lacey, Li, and Xiong (2025) on commutators of singular integrals in the Bessel setting.   
\end{abstract}

\maketitle

\setcounter{tocdepth}{1}
\tableofcontents

\section{Introduction}


Fractional integral operators play a fundamental role in analysis due to their widespread applications in potential analysis, harmonic analysis, PDE and Sobolev embeddings. 

The main classical example of fractional operators on the Euclidean space $\mathbb{R}^d$ with $d\ge 1$ is the Riesz potential $(-\Delta)^{-\frac{\alpha}{2}}$, with $\alpha=d\ep>0$, given by 
\begin{equation}\label{eq:Riesz}
  (-\Delta)^{-\frac{\alpha}{2}}f(x)
=c_{d,\alpha}\int_{\R^d}\frac{f(y)}{|x-y|^{d-\alpha}}dy
=c_{d,d\eps}\int_{\R^d}\frac{f(y)}{|x-y|^{d(1-\ep)}}dy,
\end{equation}
where
\begin{equation*}
  c_{d,\alpha}
  =\frac{\Gamma(\frac{d-\alpha }{2})}{2^{\alpha}\pi^{\frac{d}{2}}\Gamma(\frac{\alpha}{2})};
\end{equation*}
see \cite[Chapter 6]{Grafakos}. We write the two equivalent formulas above, parametrised by $\alpha$ and $\ep$, since they give rise to two different classes of generalisations, as we will see below.

The topic of this paper is {\em commutators} of fractional integral operators with pointwise multipliers, namely, operators of the type
\begin{equation*}
  [b,T]f= bTf-T(bf),
\end{equation*}
where $T=(-\Delta)^{-\frac{\alpha}{2}}$ or one of its generalisations that we shortly describe.

Some classical results in this theme are as follows:
In \cite{C1}, Chanillo showed that for any $0<\alpha<d$ and $1<p<\frac{d}{\alpha}$ as well as $ \frac{1}{q}=\frac{1}{p}-\frac{\alpha}{d}$, the commutator $[b, (-\Delta)^{-\frac{\alpha}{2}}]$ is bounded from $L^p(\mathbb{R}^d)$ to $L^q(\mathbb{R}^d)$
if and only if  $b\in BMO(\mathbb{R}^d)$, the space of functions of bounded mean oscillation.
 Later, Wang \cite{W} proved that $[b, (-\Delta)^{-\frac{\alpha}{2}}] $ is compact from $L^p(\mathbb{R}^d)$ to $L^q(\mathbb{R}^d)$
if and only if  $b\in VMO(\mathbb{R}^d)$, the $BMO$-closure of $C_c^{\infty}(\mathbb{R}^d)$.

In this paper, we are particularly interested in quantitative versions of compactness measured in terms of the Schatten $S^p$ norms
\begin{equation}\label{eq:Sp}
  \Norm{R}_{S^p(L^2(\mu))}
  :=\Big(\sum_{n=0}^\infty a_n(R)^p\Big)^{\frac1p},\qquad
  a_n(R):=\inf\{\Norm{R-F}_{L^2(\mu)\to L^2(\mu)}:\operatorname{rank}F\leq n\},
\end{equation}
where $a_n(R)$ is the $n$th approximation number (or singular value) of $R:L^2(\mu)\to L^2(\mu)$.
In this direction, 
Janson and Peetre \cite[p.~484]{JP} obtained the following results in the Euclidean space $\mathbb{R}^d$ with $d\ge 2$
 as a special case of their work on so-called ``paracommutators'':
\begin{enumerate}
\item For $p\ge 1$ and $ \big(\frac{d}{p}-1\big)_+<\alpha<\min \{\frac{d}{p},\frac{d}{2} \}$, the commutator $[b,{(-\Delta)}^{-\frac{\alpha}{2}}]$ belongs to the Schatten class $S^p(L^2(\R^d))$ if and only if $b$ belongs to the classical fractional Sobolev space $\dot{B}_{p,p}^{\frac{d}{p}-\alpha}(\mathbb{R}^d)$ (see the definition in (\ref{bRn})).
\item For $1\leq p <d$ and $0< \alpha \leq \frac{d}{p}-1$, the commutator $[b,{(-\Delta)}^{-\frac{\alpha}{2}}]$ belongs to the Schatten class $S^p(L^2(\R^d))$ if and only if $b$ is constant. 
\end{enumerate}

For $d\ge 2$ and $\ep \in (0,\frac{1}{2})$, these results can be restated as follows:
\begin{equation}\label{eqJP}
[b, (-\Delta)^{-\frac{d\ep}{2}}]\in S^p(L^2(\R^d)) \iff 
\begin{cases}
  b \in \dot{B}_{p,p}^{d(\frac{1}{p}-\ep)}(\mathbb{R}^d),& \quad \frac{d}{1+d\ep}<p<\frac{1}{\ep},\\
  b =\mathrm{constant},&\quad 0<p\leq \frac{d}{1+d\ep}.
\end{cases}	
\end{equation}

At the critical point $p=\frac{d}{1+d\ep}$, 
Frank, Sukochev, and Zanin \cite{FSZ} showed that $[b,(-\Delta)^{-\frac{d\ep}{2}}]$ belongs to the weak Schatten class $S^{p,\infty}$ if and only if $b$ belongs to the homogeneous Sobolev space $\dot{W}^{1}_{p}(\mathbb{R}^d)$.
For the corresponding Schatten $S^p$ properties of the fractional Laplacian operator $(-\Delta)^{\frac{\alpha}{2}}$ with positive order $\alpha>0$, we refer the reader to \cite{FSZ},\cite{JP},\cite{M}. These questions fall outside the scope of the present work.

Several authors have also considered related questions in the more general setting of a space of homogeneous type $(X,\rho,\mu)$, which is a set $X$ with a quasi-distance $\rho$ and a positive measure $\mu$ such that the balls defined by $B(x,r)=\{y\in X:\rho(x,y)<r\}$ satisfy a doubling condition; see Section \ref{sec:def} for a detailed definition. In this setting, two types of fractional integrals are defined by
\begin{equation}\label{def}
	I_{\ep}f(x)=\int_{X} \frac{1}{V(x,y)^{1-\ep}} f(y)d\mu(y),\qquad
	\tilde I_{\alpha}f(x)=\int_X\frac{\rho(x,y)^\alpha}{V(x,y)}f(y)d\mu(y),
\end{equation}
where $V(x,y):=\mu(B(x,\rho(x,y)))$; we will refer to them as ``volumic'' and ``metric'', respectively.
When $X=\mathbb{R}^d$ with $\rho(x,y)=|x-y|$ and $d\mu=dx$, both $I_{\ep}$ and $\tilde I_\alpha=\tilde I_{d\eps}$ reduce to the classical Riesz potential $(-\Delta)^{-\frac{d\eps}{2}}$. Note that $I_\eps$ arises by interpreting the whole $|x-y|^{d(1-\eps)}$ in \eqref{eq:Riesz} as $V(x,y)^{1-\eps}$, while $\tilde I_\alpha$ is based on applying a different interpretation $|x-y|^d\sim V(x,y)$ and $|x-y|^{\alpha}=\rho(x,y)^{\alpha}$ to the two factors of $|x-y|^{d-\alpha}$.

Many works on fractional integrals over spaces of homogeneous type, like \cite{GV}, are formulated in so-called ``normal'' spaces with $V(x,y)\sim \rho(x,y)$, in which case the volumic and the metric versions coincide. Without assuming normality, volumic fractional integrals have been studied e.g.\ in \cite{Pan}, and \cite{BC} obtained the $(L^p,L^q)$ boundedness of their commutators $[b,I_{\ep}]$ for $b \in BMO(X)$. These volumic fractional integrals admit a relatively clean theory in its own right. Nevertheless, it seems that metric fractional integrals, especially with $\alpha=1$, are actually the ones that more frequently arise in applications; see e.g.\ \cite[Theorem 3.22 and Section 9.1]{Heinonen} and \cite[Eq.~(7)]{PW}, where further references to such operators in different contexts are given. In particular, fractional powers $(-\Delta_\lambda)^{-\frac{\alpha}{2}}$ of the Bessel Laplacian $\Delta_\lambda$ turn out to be of the metric form; see \cite{BDLL} and Section \ref{sec:Bessel} below. Incorporating this prominent example into our theory was a major motivation for dealing with $\tilde I_\alpha$, and we will return to this example in more detail below. More generally, we show in Section \ref{heat kernel} that a large class of fractional operators arising from heat kernels fall under the umbrella of metric fractional integrals.


 
The aim of this paper is to investigate the Schatten class $S^p$ properties of the commutators $[b,T]$, where $T\in\{T_{\ep},\widetilde{T_{\alpha}}\}$ belongs to one of two classes of fractional integral operators modelled after $I_\eps$ and $\tilde I_\alpha$, respectively. We will show that $[b,T]\in S^p$ if and only if $b$ belongs to a suitable Besov space, with certain fractional oscillatory spaces 
as intermediate steps in proving this equivalence.


Our approach is based on the recent works of Hyt\"{o}nen and Korte \cite{H2,HK} where, building on the work of Janson--Peetre \cite{JP} and Rochberg--Semmes \cite{RS} in the Euclidean case, they established similar results on spaces of homogeneous type for singular instead of fractional integrals, corresponding formally to the case $\ep,\alpha=0$. Even in the Euclidean setting $X=\mathbb{R}^d$, our framework complements the results of Janson and Peetre \cite{JP}, whose operators are defined on the Fourier transform side, in contrast to the more direct spatial description in our theory. While the basic case of $[b,(-\Delta)^{-\frac{\alpha}{2}}]$ is covered by both, the examples beyond that are not comparable.
  
The following corollary, with clean conclusions under somewhat stronger assumptions than our main Theorem \ref{T} and Corollary \ref{C},  serves as an illustration of our results.

\begin{corollary}\label{C'}
	Let $(X,\rho,\mu)$ be a space of homogeneous type with lower dimension $d> 2$ (Definition \ref{def:dim}) and satisfying the $(1,2)$-Poincar\'e inequality (Definition \ref{def:Poincare}). 
Suppose that $\ep \in (0,\frac12-\frac{1}{d})$ and $\alpha=d\ep\in(0,\frac{d}{2}-1)$.
Then the following hold for all $b \in L_{\mathrm{loc}}^1(X)$:
%
\begin{enumerate}[\rm(i)]

\item\label{C:p>d'} If $p\in (\frac{d}{1+d\ep},\frac{1}{\eps}) =(\frac{d}{1+\alpha},\frac{d}{\alpha})$, then 
$$\begin{cases}
		[b,I_{\ep}]\in S^p(L^2(\mu)) &\iff  \quad b \in \dot{B}_p^{\frac1p-\ep}(\mu),\\
		[b,\tilde{I}_{\alpha}]\in S^p(L^2(\mu)) &\iff \quad b \in \widetilde{B}_p^{\alpha}(\mu),
	\end{cases}$$
where the two Besov spaces on the right are defined in \eqref{b2} and \eqref{b2 variant}, respectively.

\item\label{C:p<d'} If $p\in(0,\frac{d}{1+d\ep}]=(0,\frac{d}{1+\alpha}]$,
then $[b,I_{\ep}]\in S^p(L^2(\mu))$ or $[b,\tilde{I}_{\alpha}]\in S^p(L^2(\mu))$ if and only if $b$ is constant.
\end{enumerate}
More generally, the same conclusions hold for all strongly non-degenerate $\phi$-fractional integral operators (Definition \ref{DefT}) $T_\eps$ in place of $I_\eps$ and $\widetilde T_\alpha$ in place of $\tilde I_\alpha$, where $\phi(x,y)\in\{V(x,y)^\eps,\rho(x,y)^\alpha\}$, respectively.
\end{corollary}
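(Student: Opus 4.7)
The strategy is to deduce Corollary \ref{C'} from the more general Theorem \ref{T} and Corollary \ref{C} by specialising to the concrete operators $I_\eps$ and $\tilde I_\alpha$, and then invoking the stronger geometric assumptions (lower dimension $d>2$ and the $(1,2)$-Poincar\'e inequality) to simplify the function spaces appearing on the $b$-side of the characterisation. In particular, the role of Corollary \ref{C'} is to serve as a clean corollary, so most of the analytic content — the construction of near-diagonal wavelet-type bases, martingale/dyadic reductions, and the transference between Schatten norms of the commutator and suitable oscillation seminorms of $b$ — is assumed to have already been delivered by Theorem \ref{T} and Corollary \ref{C}.

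The first step is to verify that both $I_\eps$ and $\tilde I_\alpha$ from \eqref{def} fit the framework of Definition \ref{DefT} with $\phi(x,y)=V(x,y)^\eps$ and $\phi(x,y)=\rho(x,y)^\alpha$ respectively, and that both are strongly non-degenerate. Since the kernels are positive and pointwise comparable to $\phi(x,y)/V(x,y)$ with no oscillation, the non-degeneracy condition reduces to choosing, for each ball $B(x_0,r)$, a well-separated companion ball where the kernel admits a matching lower bound; the doubling property and the volume comparability $V(x,y)\sim V(y,x)$ make this straightforward and uniform in scale.

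Next I would match the parameter thresholds. The restrictions $\eps\in(0,\tfrac12-\tfrac1d)$ and $\alpha=d\eps\in(0,\tfrac{d}{2}-1)$ are exactly what is needed to place us in the regime where Theorem \ref{T} yields the two-sided characterisation with the \emph{Besov} norm rather than one of the intermediate fractional oscillatory seminorms. The upgrade from an oscillatory seminorm to the Besov (equivalently fractional Sobolev) seminorm in \eqref{b2}, \eqref{b2 variant} is where the $(1,2)$-Poincar\'e inequality enters: it converts a mean oscillation of $b$ on a ball into an $L^2$-average of $|\nabla b|$ (or of a suitable upper gradient), which in turn controls the Besov seminorm of the relevant order. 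The two endpoints of the admissible range in (\ref{C:p>d'}) correspond, at $p=1/\eps$, to the $S^p$-summability of the singular values failing on dimensional grounds (the kernel's decay becomes too slow), and at $p=d/(1+d\eps)$, to the smoothness index $1/p-\eps$ becoming so large that no non-trivial function on $X$ can meet it — except constants. This last observation is precisely what drives part (\ref{C:p<d'}): in that subcritical range, Corollary \ref{C} forces $b$ into a Besov-type class of regularity above what the Poincar\'e space $X$ can carry, and the Poincar\'e inequality upgrades "zero upper gradient almost everywhere" to globally constant.

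The main obstacle, beyond careful bookkeeping of these thresholds, is the verification of strong non-degeneracy for $\tilde I_\alpha$, where the metric weight $\rho(x,y)^\alpha$ and the volume $V(x,y)$ are decoupled and one cannot simply appeal to comparability with a power of $V$. Here the lower dimension hypothesis $d>2$ provides the uniform volume lower bound $V(x,r)\gtrsim r^d$ on small scales needed to compare $\rho^\alpha$ with $V^{\alpha/d}$ on the relevant annuli, after which the argument used for $I_\eps$ adapts. Once this is in place, (\ref{C:p>d'}) and (\ref{C:p<d'}) follow by direct substitution into Theorem \ref{T} and Corollary \ref{C}, and the final sentence of the corollary (the general $\phi$-fractional version) is literally the statement of those results in this parameter range.
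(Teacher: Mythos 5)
Your high-level plan (reduce to Theorem \ref{T} / Corollary \ref{C} after checking that $I_\eps$ and $\tilde I_\alpha$ are strongly non-degenerate) is the paper's route, but two of your key justifications are wrong, and the one numerical observation that actually makes the corollary work is missing. First, the restriction $\eps<\frac12-\frac1d$ is \emph{not} about ``Besov norm versus oscillatory seminorm''; it is precisely the condition that $\frac{d}{1+d\eps}>2$, so that the entire range $p\in(\frac{d}{1+d\eps},\frac{1}{\eps})$ in part \eqref{C:p>d'} lies inside $[2,\infty)$, which is where the upper Schatten bound (Theorem \ref{T}\eqref{T:p>2}; see Remark \ref{R1.14}) is available. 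Without making this inequality explicit you have not actually verified that the hypotheses of Theorem \ref{T} are met on the stated range. Second, you misattribute the role of the $(1,2)$-Poincar\'e inequality: the passage between oscillation seminorms and Besov seminorms (Proposition \ref{P2}) uses only the lower dimension and no Poincar\'e inequality at all. The Poincar\'e inequality enters solely in the constancy statement, Theorem \ref{T}\eqref{T:1<p<d} via Corollary \ref{corHK}; and the correct way it is used here is that, since $\frac{d}{1+d\eps}>2$, Remark \ref{R1.8} upgrades the assumed $(1,2)$-Poincar\'e inequality to the $(1,\frac{d}{1+d\eps})$-Poincar\'e inequality required there. For part \eqref{C:p<d'} you also gloss over the step that handles $p<2$ (indeed $p$ can be arbitrarily small): one must use the monotonicity $S^p\subset S^q$ with $q=\frac{d}{1+d\eps}$, apply the two-sided characterisation at the exponent $q$ (legitimate since $q>2$ and $q<\frac1\eps$), and then invoke \eqref{4.4}.

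Your discussion of strong non-degeneracy of $\tilde I_\alpha$ is also off. There is no special difficulty caused by the ``decoupling'' of $\rho^\alpha$ and $V$: since the kernel equals $\rho(x,y)^\alpha/V(x,y)$ exactly, Lemma \ref{lem:K>0} applies verbatim to both $I_\eps$ and $\tilde I_\alpha$ (this is Example \ref{ex:basic}); on the annulus one has $\rho(x,y)\sim Ar$ and $V(x,y)\sim V(x_0,r)$ by doubling, with no need to compare $\rho^\alpha$ with $V^{\alpha/d}$. Moreover, the bound $V(x,r)\gtrsim r^d$ that you invoke does not follow from the lower dimension hypothesis \eqref{dD'}, which is a two-radius ratio condition, not an absolute Ahlfors-type lower bound; fortunately it is also not needed.
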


We note that the $(1,2)$-Poincar\'e inequality is a natural assumption, in the sense that it is the version of the Poincar\'e inequality most frequently established in concrete situations; see e.g. Baudoin et al. \cite{BBG}.

\begin{remark}
For $(X,\rho,\mu)=(\R^d,|x-y|,dx)$,
both $\dot B_p^{\frac1p-\ep}(\mu)$ and $\widetilde{B}_p^{d\eps}(\mu)$ coincide with the classical Besov space $\dot B_{p,p}^{d(\frac1p-\eps)}(\R^d)$, and Corollary \ref{C'} applies in particular to $I_{\eps}=\tilde I_{d\eps}=(-\Delta)^{-\frac{d\eps}{2}}$. We see that the Corollary reproduces the classical Janson--Peetre result \eqref{eqJP}, except for the fact that our restrictions on the dimension (our $d>2$ vs.\ $d\geq 2$ in \eqref{eqJP}) and the fractional parameter (our $\eps\in(0,\frac12-\frac1d)$ vs.\ $\eps\in(0,\frac12)$ in \eqref{eqJP}) are somewhat stronger. These restrictions arise from a limitation of our method described in Remark \ref{R1.14}. On the other hand, in this smaller range, Corollary \ref{C'} not only recovers \eqref{eqJP} for $(-\Delta)^{-\frac{\alpha}{2}}$, but also covers a large range of other fractional operators as in Definition \ref{DefT}.
\end{remark}

As a more serious application of our abstract results, we characterise the Schatten properties of commutators of fractional powers of the Bessel Laplacian 
\begin{equation}\label{eq5.1}
\Delta_{\lambda}^{(n+1)}:=\frac{\partial^2}{\partial x_1^2}+\dots
 +\frac{\partial^2}{\partial x_n^2} +\frac{\partial^2}{\partial x_{n+1}^2} +\frac{2\lambda}{x_{n+1}} \cdot \frac{\partial}{\partial x_{n+1}}
\end{equation}
on $\R_+^{n+1}=\R^n\times(0,\infty)$. We give here the following illustrative result, leaving a more general statement for Corollary \ref{PB2}:

\begin{corollary}\label{PB2'}
Let $n\ge 2$, $\lambda>0$, and $0<\alpha<\frac{n-1}{2}$. Let $(-\Delta_{\lambda})^{-\alpha / 2}$ be the fractional Bessel operator in $(\mathbb{R}^{n+1}_+,|\cdot |,dm_{\lambda}^{(n+1)}) $, where $dm_\lambda^{(n+1)}(x)=x_{n+1}^{2\lambda}dx$. Then the following conclusions hold for all $b \in L_{\mathrm{loc}}^1(\mathbb{R}^{n+1}_+)$:
\begin{equation*}
  [b,(-\Delta_\lambda)^{-\frac{\alpha}{2}}]\in S^p(L^2(dm_\lambda^{(n+1)}))\quad\iff\quad
  \begin{cases} b\in \widetilde{B}_{p}^{\alpha}(dm_{\lambda}^{(n+1)}), & \text{if}\quad p\in (\frac{n+1}{\alpha+1},\frac{n+1}{\alpha}), \\
  b=\text{const}, & \text{if}\quad p\in(0, \frac{n+1}{\alpha+1}],\end{cases}
\end{equation*}
where $\widetilde{B}_{p}^{\alpha}(dm_{\lambda}^{(n+1)})$ is defined as in \eqref{b2 variant} with $(X,\rho,\mu)=(\mathbb{R}^{n+1}_+,|\cdot |,dm_{\lambda}^{(n+1)})$.
%
%
%
%
%
\end{corollary}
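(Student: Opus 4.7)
The plan is to derive this corollary as a direct application of the metric case of Corollary \ref{C'} to the Bessel setting $(X,\rho,\mu) = (\mathbb{R}^{n+1}_+, |\cdot|, dm_\lambda^{(n+1)})$. Setting $d = n+1$, the hypothesis $n \geq 2$ gives the required $d > 2$, the assumption $\alpha \in (0, \frac{n-1}{2})$ matches $\alpha \in (0, \frac{d}{2} - 1)$, and the two $p$-ranges translate on the nose: $(\frac{n+1}{\alpha+1}, \frac{n+1}{\alpha}) = (\frac{d}{1+\alpha}, \frac{d}{\alpha})$ and $(0, \frac{n+1}{\alpha+1}] = (0,\frac{d}{1+\alpha}]$. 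Thus the task reduces to verifying three structural hypotheses of Corollary \ref{C'}: (a) $(\mathbb{R}^{n+1}_+, |\cdot|, dm_\lambda^{(n+1)})$ is a space of homogeneous type with lower dimension $n+1$; (b) it satisfies the $(1,2)$-Poincar\'e inequality; and (c) $(-\Delta_\lambda)^{-\alpha/2}$ is a strongly non-degenerate $\phi$-fractional integral operator in the sense of Definition \ref{DefT}, of the metric type $\widetilde{T}_\alpha$ with $\phi(x,y) = |x-y|^\alpha$.

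For (a), a direct computation gives the volume bound
$$V_\lambda(x,r) := m_\lambda^{(n+1)}(B(x,r)) \simeq r^{n+1}(x_{n+1} + r)^{2\lambda},$$
from which the doubling property and the lower dimension $n+1$ (and upper dimension $n+1+2\lambda$) follow at once. For (b), the weight $x_{n+1}^{2\lambda}$ belongs to the Muckenhoupt $A_2$ class on the upper half-space, so the weighted $(1,2)$-Poincar\'e inequality is classical; alternatively, one may invoke the Bakry--\'Emery type arguments in \cite{BBG}.

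The substantive work lies in (c). The subordination formula
$$(-\Delta_\lambda)^{-\alpha/2} f(x) = \frac{1}{\Gamma(\alpha/2)} \int_0^\infty t^{\alpha/2 - 1} e^{t \Delta_\lambda} f(x)\, dt$$
expresses the fractional power as an integral of the Bessel heat semigroup. The Bessel heat kernel is known to satisfy Gaussian-type upper and lower bounds together with H\"older continuity, as established in \cite{BDLL}. The abstract framework of Section \ref{heat kernel} is designed precisely to convert such heat-kernel estimates into the assertion that the resulting fractional operator meets Definition \ref{DefT} in the metric form $\widetilde{T}_\alpha$ with $\phi(x,y) = |x-y|^\alpha$. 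Integration in $t$ against the Gaussian upper bound yields the pointwise estimate $K(x,y) \lesssim \frac{|x-y|^\alpha}{V_\lambda(x,|x-y|)}$, and differentiation under the integral combined with the heat kernel H\"older estimate produces the required off-diagonal smoothness.

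The main obstacle is the strong non-degeneracy clause in (c): one must exhibit a configuration of point pairs on which the kernel is, up to constants, bounded below by its size estimate. This is extracted from the Gaussian lower bound for $e^{t\Delta_\lambda}(x,y)$ in the regime $t \sim |x-y|^2$, where the leading contribution to the subordination integral concentrates. Once strong non-degeneracy is verified, Corollary \ref{C'} applies verbatim and delivers the stated dichotomy in terms of $\widetilde{B}_p^\alpha(dm_\lambda^{(n+1)})$ versus $b$ being constant.
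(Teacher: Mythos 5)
Your proposal follows essentially the same route as the paper: reduce to Corollary \ref{C'} with $d=n+1$ (the parameter translation $\alpha\in(0,\tfrac{n-1}{2})=(0,\tfrac d2-1)$ and the two $p$-ranges are correct), verify the homogeneous-type structure with lower dimension $n+1$ via the volume estimate $V_\lambda(x,r)\sim r^{n+1}(x_{n+1}^{2\lambda}+r^{2\lambda})$, verify a $(1,2)$-Poincar\'e inequality, and establish that $(-\Delta_\lambda)^{-\alpha/2}$ is a strongly non-degenerate metric fractional integral with $\phi(x,y)=|x-y|^\alpha$ by subordination to the Bessel heat kernel — precisely the content of Proposition \ref{Bessel}, Proposition \ref{P5.5}, and the heat-kernel machinery of Section \ref{heat kernel}. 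Your identification of the Gaussian lower bound at scale $t\sim|x-y|^2$ as the source of strong non-degeneracy matches Lemma \ref{L8.10} combined with Proposition \ref{GP} and Lemma \ref{lem:K>0}.

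One step of your argument is incorrect as stated: the weight $x_{n+1}^{2\lambda}$ does \emph{not} belong to $A_2$ for all $\lambda>0$. The one-dimensional power weight $t^{2\lambda}$ on $(0,\infty)$ satisfies the $A_2$ condition only when $2\lambda<1$ (test intervals $(0,\eps)$ make $\fint t^{-2\lambda}\,dt$ diverge for $\lambda\ge\tfrac12$), and more generally $t^{2\lambda}\in A_p$ only for $p>2\lambda+1$, which would yield a $(1,p)$-Poincar\'e inequality with $p$ growing with $\lambda$ rather than the needed $(1,2)$ (or better) inequality. The Poincar\'e inequality for these weights in the full range $\lambda>0$ is true but requires the stronger fact that power weights are $p$-admissible beyond the Muckenhoupt range; the paper simply cites \cite[Proposition 4.2]{H2} (restated as Proposition \ref{Bessel}), which gives even the $(1,1)$-Poincar\'e inequality. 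You should replace the $A_2$ justification by such a reference. A minor further point: the two-sided Gaussian bounds and H\"older continuity of the Bessel heat kernel that you attribute to \cite{BDLL} are only available there for $n=0$; for $n\ge1$ the upper bounds come from \cite{FLL} and the lower bound is proved in the paper (Lemma \ref{L8.10}) using the product structure $e^{t\Delta_\lambda}=e^{t\Delta^{(n)}}e^{t\Delta_\lambda^{(1)}}$, so this part cannot simply be quoted.
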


\begin{proof}[Sketch of proof]
This is a direct application of Corollary \ref{C'}, once we verify the following:
\begin{enumerate}[\rm(i)]
  \item $(\mathbb{R}^{n+1}_+,|\cdot |,dm_{\lambda}^{(n+1)})$ is a space of homogeneous type of lower dimension $d=n+1>2$ that satisfies the $(1,2)$ (in fact, even the stronger $(1,1)$) Poincar\'e inequality. This is \cite[Proposition 4.2]{H2}, restated as Proposition \ref{Bessel}.
  \item\label{it:Bessel-phi} $(-\Delta_\lambda)^{-\frac{\alpha}{2}}$ is a strongly non-degenerate $\phi$-fractional integral operator on this space with $\phi(x,y)=\abs{x-y}^\alpha$. This is Proposition \ref{P5.5}.
\end{enumerate}
\end{proof}

Part \eqref{it:Bessel-phi} of the proof above is due to \cite{BDLL} in the special case $n=0$ (i.e., on $\R_+=(0,\infty)$), but seems to be unavailable in the previous literature for $n\geq 1$. We establish these properties in Section \ref{sec:Bessel}. Besides Corollary \ref{PB2'}, this may have independent interest in bringing the fractional Bessel operators $(-\Delta_{\lambda})^{-\frac{\alpha}{2}}$ under the general umbrella of fractional integrals on spaces of homogeneous type, for which other results can then be directly quoted from the literature.

There are previous results related to Corollary \ref{PB2'} by Fan, Lacey, Li, and Xiong \cite{FLL}, who  
deal with the Schatten properties of commutators associated with the Bessel Riesz transforms  
$R_{\lambda,j} = \partial_{j} (-\Delta_\lambda)^{-\2}$.  
By the kernel estimates of $R_{\lambda,j}$ obtained in \cite{DGK},  
these are operators of singular integral type,
formally corresponding to $\widetilde{T}_{\alpha}$ with $\alpha = 0$,
which are in the scope of the theory of Hyt\"onen and Korte \cite{H2,HK}.

The structure of the paper is as follows. 
In Section \ref{sec:def}, we give the main definitions and state the general form of our main results.
In Section \ref{sec:prelim}, we prove some basic lemmas supporting the definition in Section \ref{sec:def}, and we introduce the definition of dyadic cubes as preparatory tools.
Section \ref{sec:upper} is devoted to establishing  upper Schatten bounds of the commutators $[b,T]$. 
In Section \ref{sec:Besov}, we present the equivalent characterization of the fractional Sobolev norms via some fractional oscillatory norms.
In Section \ref{sec:median}, we simplify the recent complex median method of Wei and Zhang \cite{WZ} and apply it to fractional integrals.
Building on the results from Section \ref{sec:Besov} and \ref{sec:median}, we derive the lower Schatten bounds of the commutators in Section \ref{sec:lower}.
Adapting results from Hyt\"onen and Korte \cite{HK}, we show in Section \ref{sec:const} that certain Besov spaces only consist of constants.
The proofs the main results, including Corollary \ref{C'}, are then completed by synthesizing the preceding estimates in Section \ref{sec:synth}. 

The final three sections provide examples of fractional integrals that fall under the scope of our theory.
In Section \ref{sec:regular}, we study kernels with additional regularity, which is often available in applications.
In Section \ref{heat kernel}, we show that negative fractional powers $\mathcal L^{-s}$ of generators of heat semigroups $e^{-t\mathcal L}$, under quite general assumptions on the heat kernel $p_t(x,y)$, are metric fractional integrals in the sense of our definition. In Section \ref{sec:Bessel}, we deal with the specific case of fractional Bessel operators, after verifying that they fit into our general framework.

\subsection*{Notation}  
We write \(X \lesssim Y\) to mean \(X \leq C Y\) for some constant \(C > 0\) independent of key variables, and \(X \sim Y\) when both \(X \lesssim Y\) and \(Y \lesssim X\) hold. 

\section{Definitions and main results}\label{sec:def}

We now provide the full set of relevant definitions and then state the general form of our main results, a special case of which was formulated in Corollary \ref{C'} in the Introduction.

Firstly, we recall that $(X,\rho,\mu)$ is a space of homogeneous type if $\rho:X \times X \rightarrow [0,+\infty)$ is a quasi-metric on the set $X$ satisfying the following properties:
$\mathrm{(\romannumeral1)}$ $\rho(x,y)=\rho(y,x)\ge 0$ for all $x,y\in X$,
$\mathrm{(\romannumeral2)}$ $\rho(x,y)=0$ if and only if $x=y$,
$\mathrm{(\romannumeral3)}$ there exists a constant $A_0\ge 1$ such that for all $x,y,z \in X$,
\begin{equation}\label{quasitri}
\rho(x,y)\leq A_0[\rho(x,z)+\rho(z,y)],	
\end{equation}
 and $\mu$ is a positive Borel measure on $X$, satisfying the doubling condition: 
\begin{equation}\label{eq1}
0<\mu(2B)\leq C\mu(B)<\infty, \quad \text{for   all balls }B.
\end{equation}
We abbreviate
\begin{equation*}
  V(x,r):=\mu(B(x,r)),\qquad
  V(x,y):=\begin{cases} V(x,\rho(x,y)), & x\neq y, \\ \mu(\{x\}), & x=y.\end{cases}
\end{equation*}

 
 \begin{definition}\label{def:dim}
 We say that $(X,\rho,\mu)$ has upper dimension $D>0$ if there exists a constant  $C_{\mu}\ge 1$ such that for any $x \in X$ and $0<r\leq R<\infty$, 
\begin{equation}\label{dD}
	 \frac{V(x,R)}{V(x,r)}\leq C_{\mu}(\frac{R}{r})^D,  
\end{equation}
and $(X,\rho,\mu)$ has lower dimension $d>0$ if there exists a constant  $\widetilde{C}_{\mu}\ge 1$ such that for any $x \in X$ and $0<r\leq R<\infty$, 
\begin{equation}\label{dD'}
	 \frac{V(x,R)}{V(x,r)}\ge  \widetilde{C}_{\mu}(\frac{R}{r})^d.
\end{equation}
 Note that \eqref{dD} is equivalent to the doubling condition \eqref{eq1}.
 \end{definition}

A measure $\mu$ on $X$ is said to be Ahlfors $\gamma$-regular, if there is a constant $\beta \ge 1$ such that 
$${\beta}^{-1}r^{\gamma}\leq \mu(B(x,r))\leq \beta r^{\gamma},$$
for any $0<r<\infty$ and any ball $B(x,r)$ in $X$. 
A metric space $X$ carrying an Ahlfors $\gamma$-regular measure is called an Ahlfors $\gamma$-regular space.
It has both the upper dimension $D=\gamma$ and the lower dimension $d=\gamma$.

\subsection{Fractional integrals and their commutators}
 
 To streamline the discussion of the two different versions of fractional integrals, 
for $\ep\in (0,1)$ and $\alpha\in(0,\infty)$, we denote
 \begin{equation}\label{eq:phixr}
  \phi(x,r)=\phi(B(x,r))\in\{ V(x,r)^\eps, r^\alpha\}.
\end{equation}
and, consistently with the two cases in \eqref{eq:phixr},
\begin{equation}\label{eq:phixy}
  \phi(x,y):=\phi(x,\rho(x,y))\in\{ V(x,y)^\eps, \rho(x,y)^\alpha\}.
\end{equation}

\begin{definition}\label{D1}
A function $K \in L_{\mathrm{loc}}^1 (X \times X  )$ is called
 a {\em $\phi$-fractional integral kernel} if there is a constant $C_{K}$ such that
\begin{equation}\label{1.1}
|K(x,y)|\leq C_K \frac{\phi(x,y)}{V(x,y)}\qquad\Big(\text{with}\quad\frac{0}{0}:=0\Big)
\end{equation}
 for all $x,y \in X$. 
\end{definition}

 \begin{definition}\label{D1'}
 A $\phi$-fractional integral kernel $K $ is said to be 
   {\em strongly non-degenerate} if there are positive constants $A,c_1$ and $\overline{C}$, and $\eta\leq\frac{\pi}{9}$, such that
  for every $x_0\in X$ and $r>0$, there exists a point $y_0 \in B(x_0,\overline{C}Ar) \setminus B(x,Ar)$ such that for some $v\in\C$ with $\abs{v}=1$, we have at least one of the following two options:
\begin{equation}\label{eq:sndg1}
|K(x,y)|\ge c_1\cdot \frac{\phi(x_0,r)}{V(x_0,r)},\qquad
|\arg(\bar v K(x,y))|\leq\eta
\end{equation} 
for all $x\in B(x_0,r)$ and $y\in B(y_0,r)$, or
\begin{equation}\label{eq:sndg2}
|K(y,x)|\ge c_1\cdot \frac{\phi(x_0,r)}{V(x_0,r)},\qquad
|\arg(\bar v K(y,x))|\leq\eta
\end{equation} 
for all $x\in B(x_0,r)$ and $y\in B(y_0,r)$.
 \end{definition}


 \begin{remark}\label{rem:ndg2}
Although ``strongly non-degenerate'' is the main notion of non-degeneracy that we use in this paper, we reserve the simpler name ``non-degenerate'' for another variant (Definition \ref{D1reg}), since this variant is closer to the notion of ``non-degenerate'' for singular integral kernels as defined in \cite[Eq.\ (1.7)]{H2}.
 
The existence of a (strongly) non-degenerate kernel requires in particular that $B(x,r)\neq X$ for all $r>0$, hence that $X$ is unbounded, and thus (by \cite[Lemma 1.9]{BC}) that $\mu(X)=\infty$. Hence, this assumption is implicitly in force in all results dealing with (strongly) non-degenerate kernels. A modification can be made to accommodate spaces of finite diameter; see \cite[Section 10]{H2} for related discussion in the case of singular (instead of fractional) kernels.

Moreover, the existence of a (strongly) non-degenerate kernel also implies the non-empty annulus property $B(x,\overline{C}r) \setminus B(x,r)\neq\varnothing$ for all $x\in X$ and $r>0$, which is equivalent to inequality \eqref{dD'} for some $d>0$ by \cite[Remark 1.2]{HMY'}. In particular, this implies that $\mu(\{x\})=0$ for all $x\in X$, which is hence also implicitly assumed in all results dealing with (strongly) non-degenerate kernels.
 \end{remark}
 
 In many cases, strong non-degeneracy can be deduced from a simple two-sided bound for a non-negative fractional kernel. A prominent example is the fractional Bessel kernel, see \eqref{5.9}. We will give the proof of Lemma \ref{lem:K>0} in Section \ref{sec:lemmas}.
 
 \begin{lemma}\label{lem:K>0}
Let $(X,\rho,\mu)$ be a space of homogeneous type with the non-empty annulus property $B(x,\overline{C}r) \setminus B(x,r)\neq\varnothing$ for all $x\in X$ and $r>0$. Let $K$ be a $\phi$-fractional integral kernel that satisfies
\begin{equation}\label{eq:Ksim}
  K(x,y)\sim\frac{\phi(x,y)}{V(x,y)}
\end{equation}
for all $x,y\in X$. Then $\phi$ is strongly non-degenerate.
 \end{lemma}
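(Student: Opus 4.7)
The plan is to show the first option \eqref{eq:sndg1} of strong non-degeneracy with $v=1$ for every $(x_0,r)$: since the assumption $K(x,y)\sim \phi(x,y)/V(x,y)$ forces $K$ to be real and positive on the diagonal complement (with the positive reals being a sub-case of complex numbers), the argument condition is satisfied trivially with $\eta=0\leq \pi/9$. All the work therefore reduces to locating a suitable $y_0$ and to establishing the pointwise comparability $K(x,y)\sim \phi(x_0,r)/V(x_0,r)$ uniformly for $x\in B(x_0,r)$ and $y\in B(y_0,r)$.

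First I would fix a constant $A=A(A_0)$ to be chosen large enough below. Applying the non-empty annulus hypothesis at the scale $Ar$ (the $\overline C$ of the conclusion may be taken to be the $\overline C$ of the hypothesis, possibly enlarged) produces a point $y_0\in B(x_0,\overline C Ar)\setminus B(x_0,Ar)$. The main geometric step is then a routine chain of quasi-triangle applications \eqref{quasitri} showing that $\rho(x,y)\sim Ar$ whenever $x\in B(x_0,r)$ and $y\in B(y_0,r)$. For the upper bound, two applications of \eqref{quasitri} give $\rho(x,y)\leq A_0 r+A_0^2\overline C Ar+A_0^2 r\lesssim Ar$. For the lower bound, I would first write $\rho(x_0,y_0)\leq A_0\rho(x_0,y)+A_0 r$ (so $\rho(x_0,y)\geq (A/A_0-1)r$) and then $\rho(x_0,y)\leq A_0\rho(x_0,x)+A_0\rho(x,y)$, to arrive at $\rho(x,y)\geq (A/A_0^2-1/A_0-1)r$. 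Taking $A$ sufficiently large in terms of $A_0$, I can ensure this is both $\geq 2A_0 r$ and $\gtrsim Ar$.

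Once $\rho(x,y)\sim Ar$ is in place, the doubling condition \eqref{dD} yields $V(x,y)\sim V(x_0,r)$: the upper bound follows from $B(x,\rho(x,y))\subset B(x_0, CAr)$ for some $C=C(A_0,\overline C)$, and the lower bound from $B(x_0,r)\subset B(x,2A_0 r)\subset B(x,\rho(x,y))$. In either of the cases $\phi(x,y)\in\{V(x,y)^\eps,\rho(x,y)^\alpha\}$, these two comparabilities immediately give $\phi(x,y)\sim\phi(x_0,r)$, and hence by the hypothesis $K(x,y)\sim\phi(x,y)/V(x,y)\sim \phi(x_0,r)/V(x_0,r)$. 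This produces \eqref{eq:sndg1} with $v=1$, $\eta=0$, and some $c_1>0$ depending only on $A_0$, $\overline C$, $C_\mu$, $D$, and the comparability constants in the hypothesis. No serious obstacle is expected here; the argument is essentially bookkeeping with the quasi-triangle inequality and doubling, and the only decision to be made is the choice of $A$ in terms of $A_0$.
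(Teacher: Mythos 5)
Your proposal is correct and follows essentially the same route as the paper: pick $y_0$ from the annulus at scale $Ar$ for $A$ large, note that positivity of $K$ makes the argument condition trivial with $v=1$, $\eta=0$, and use the quasi-triangle inequality plus doubling to get $\rho(x,y)\sim Ar$, $V(x,y)\sim V(x_0,r)$, and hence $K(x,y)\sim\phi(x_0,r)/V(x_0,r)$. The only (immaterial) difference is that you verify just option \eqref{eq:sndg1}, which the definition permits, while the paper remarks that \eqref{eq:sndg2} holds analogously.
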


\begin{definition}\label{DefT}
Let $K$ be a $\phi$-fractional integral kernel satisfying \eqref{eq:phixy} and \eqref{1.1}.
The associated {\em $\phi$-fractional integral} is defined by
\begin{equation}\label{1.3}
Tf(x)=\int_{X} K(x,y)f(y)d\mu(y)
\end{equation}
for all $f\in L^1_{\operatorname{loc}}(X)$ and $x\in X$ for which the integral \eqref{1.3} is well defined. The operator $T$ is called strongly non-degenerate whenever its kernel $K$ has the corresponding property. We write $T\in\{T_\eps,\widetilde T_\alpha\}$ according to the two cases in \eqref{eq:phixy}.
\end{definition}

\begin{example}\label{ex:basic} 
The basic fractional integrals $I_\eps$ and $\tilde I_\alpha$ from \eqref{def} are strongly non-degenerate $\phi$-fractional integrals with $\phi(x,y)=V(x,y)^\eps$ and $\phi(x,y)=\rho(x,y)^\alpha$, respectively.
\end{example}

\begin{proof}
That $I_\eps$ and $\tilde I_\alpha$ from \eqref{def} are $\phi$-fractional integrals with the respective $\phi$ is clear; indeed, their kernels are equal to the upper bound defining a $\phi$-fractional integral kernel. The strong non-degeneracy follows from Lemma \ref{lem:K>0}.
\end{proof}

The following basic lemma, whose proof we postpone to Section \ref{sec:lemmas}, guarantees that \eqref{1.3} is well defined for a rather rich class of functions:
 
 \begin{lemma}\label{lem:Tf}
 Let $\phi$ and $K$ satisfy \eqref{eq:phixy} and \eqref{1.1}. Then
 \begin{enumerate}[\rm(i)]
   \item\label{it:Tf(x)} If $f\in L^1(X)$ is boundedly supported, then \eqref{1.3} is well defined for a.e.\ $x\in X$. 
   \item\label{it:Tf,g} If $f\in L^p(X)$ and $g\in L^{p'}(X)$ are boundedly supported, where $p\in[1,\infty]$ and $\frac{1}{p}+\frac{1}{p'}=1$, then $\langle Tf,g\rangle=\int_X Tf(x)g(x)d\mu(x)$ is well defined, and
\begin{equation}\label{eq:Tf,g}
   |\langle Tf,g\rangle|\leq\int_X\int_X|K(x,y) f(y)g(x)|d\mu(y)d\mu(x)
   \lesssim \phi(x_0,r)\Norm{f}_{L^p(X)}\Norm{g}_{L^{p'}(X)}
\end{equation}
if the supports of $f$ and $g$ are contained in $B(x_0,r)$.
 \end{enumerate}
 \end{lemma}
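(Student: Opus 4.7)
The plan is to establish \eqref{it:Tf,g} first, via a bilinear Schur-type argument, and then to deduce \eqref{it:Tf(x)} from it by Fubini. The core reduction is to prove the two one-sided Schur bounds
\begin{equation*}
\sup_{x \in B(x_0,r)} \int_{B(x_0,r)} \frac{\phi(x,y)}{V(x,y)}\,d\mu(y) \lesssim \phi(x_0,r),
\qquad
\sup_{y \in B(x_0,r)} \int_{B(x_0,r)} \frac{\phi(x,y)}{V(x,y)}\,d\mu(x) \lesssim \phi(x_0,r),
\end{equation*}
after which a factorisation $H = H^{1/p}H^{1/p'}$ with $H = \phi/V$, together with Hölder's inequality, yields $\int\int |Kfg|\,d\mu \lesssim \phi(x_0,r)\|f\|_{L^p}\|g\|_{L^{p'}}$. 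Note that the symmetric second bound follows from the first once one checks $V(x,y)\sim V(y,x)$ (a direct consequence of the doubling property and the quasi-triangle inequality), which in both cases also gives $\phi(x,y)\sim \phi(y,x)$.

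The metric case $\phi(x,y)=\rho(x,y)^\alpha$ is routine by annular decomposition: with $R_k = 2^{-k+1}A_0 r$ and $A_k = \{y\in B(x_0,r) : R_{k+1}\leq \rho(x,y)< R_k\}$, the pointwise bound $\rho^\alpha \leq R_k^\alpha$, the lower bound $V(x,y)\geq V(x,R_{k+1})$, and the doubling estimate $\mu(A_k)\leq V(x,R_k)\lesssim V(x,R_{k+1})$ give $\int_{A_k}\phi/V\,d\mu\lesssim R_k^\alpha$, which sums geometrically in $k$ to $\lesssim r^\alpha = \phi(x_0,r)$.

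The volumic case $\phi(x,y)=V(x,y)^\eps$ is the main obstacle, since the analogous per-annulus bound $\int_{A_k} V^{\eps-1}\,d\mu \lesssim V(x,R_k)^\eps$ does not form a convergent geometric series without a lower-dimension hypothesis (which the lemma does not assume). I would instead use the layer-cake identity
\begin{equation*}
V(x,y)^{\eps-1} = (1-\eps)\int_{V(x,y)}^\infty s^{\eps-2}\,ds
\end{equation*}
and Fubini, reducing matters to estimating $\mu\{y\in B(x_0,r) : V(x,y)<s\}$. The key observation, using only doubling, is that this set is contained in a closed ball centred at $x$ of $\mu$-measure $\lesssim s$, so the distribution function is bounded by $\min(V(x_0,r),Cs)$. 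Splitting the $s$-integral at the threshold $s_0\sim V(x_0,r)\sim V(x,2A_0 r)$ (the two are comparable when $x\in B(x_0,r)$ by doubling) yields $\int_{B(x_0,r)} V(x,y)^{\eps-1}\,d\mu(y)\lesssim V(x_0,r)^\eps = \phi(x_0,r)$, as required; the symmetric bound is obtained analogously after interchanging $x$ and $y$.

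Given \eqref{it:Tf,g}, part \eqref{it:Tf(x)} is immediate: for $f\in L^1(X)$ boundedly supported and any ball $B\subset X$, choose $z_0,r'$ with $B\cup\supp f\subset B(z_0,r')$ and apply \eqref{it:Tf,g} with $p=1$, $p'=\infty$, and $g=\mathbf{1}_B$ to obtain $\int_B\int_X |K(x,y)f(y)|\,d\mu(y)\,d\mu(x)<\infty$; Fubini then gives absolute convergence of the defining integral for $Tf(x)$ for $\mu$-a.e.\ $x\in B$, and a countable exhaustion of $X$ by balls completes the proof.
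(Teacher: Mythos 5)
Your proof is correct, but it follows a genuinely different route from the paper's. The paper first proves a pointwise maximal-function estimate (its Lemma \ref{lem:intK}), namely $\int_{B(x,r)}|K(x,y)f(y)|\,d\mu(y)\lesssim \phi(x,r)\,Mf(x)$, handling the volumic case by a recursive choice of radii $r_{k+1}<r_k$ with $V(x,r_{k+1})<\tfrac12 V(x,r_k)\leq V(x,2r_{k+1})$ (including a careful treatment of a possible atom at $x$); part \eqref{it:Tf(x)} then follows from the weak $(1,1)$ inequality for $M$, and part \eqref{it:Tf,g} from the $L^p$ (resp.\ $L^{p'}$) boundedness of $M$ applied to whichever of $f,g$ lies in a reflexive range, using the symmetry of the kernel bound in $x$ and $y$. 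You instead prove the uniform Schur bounds $\sup_{x}\int_{B(x_0,r)}\phi(x,y)V(x,y)^{-1}d\mu(y)\lesssim\phi(x_0,r)$ (which is the $f\equiv 1$ case of the paper's lemma), treating the volumic case by the layer-cake identity together with the doubling-based estimate $\mu\{y:V(x,y)<s\}\lesssim s$, and then conclude \eqref{it:Tf,g} by the Schur test $H=H^{1/p}H^{1/p'}$ and \eqref{it:Tf(x)} by Fubini. Both arguments are sound; your Schur-test formulation handles all $p\in[1,\infty]$, including both endpoints, in a single stroke without the paper's case distinction on which of $p,p'$ exceeds $1$, and your distribution-function argument is a clean substitute for the recursive radii (the containment $\{y:V(x,y)<s\}\subset\overline{B}(x,r_s)$ with $\mu(\overline{B}(x,r_s))\leq\mu(B(x,2r_s))\lesssim s$ is exactly the point where doubling enters, and it absorbs the atom issue automatically). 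What the paper's approach buys in exchange is the stronger pointwise bound in terms of $Mf(x)$ for general $f$, which is what makes its proof of \eqref{it:Tf(x)} a one-line appeal to the weak $(1,1)$ inequality.
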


\begin{remark}
If the kernel $K$ satisfies (\ref{1.1}) and (\ref{1.2}) with $\ep,\alpha =0$, then $K$ is called a standard kernel and the corresponding operator $T$ as in  (\ref{1.3}) is called a Calder\'on-Zygmund operator. Giving a meaning to the integral \eqref{1.3} is much trickier in this case, and one usually only requires that the representation \eqref{1.3} is valid for $x$ outside the support of $f$.
\end{remark}
 

%
%

Our main object of study is {\em commutators} associated to the fractional integral $T$:


\begin{definition}\label{DfC}
Let $b \in L_{\mathrm{loc}}^1(X)$.
Let $T$ be a $\phi$-fractional integral operator as in Definition \ref{DefT}, with $\phi$-fractional integral kernel.
The commutator $[b,T]$ is defined by
\begin{equation*}
[b,T]f(x) := \int_X (b(x)-b(y))K(x,y)f(y)d\mu(y)
\end{equation*}
for all $f\in L^1_{\operatorname{loc}}(X)$ and $x\in X$ such that the integral is well defined.
\end{definition}

\begin{lemma}\label{lem:[b,T]}
\mbox{}
\begin{enumerate}[\rm(i)]
  \item\label{it:both} If both $Tf(x)$ and $T(bf)(x)$ are well defined, then so is $[b,T]f(x)$‚ and we have
\begin{equation*}
    [b,T]f(x)=b(x)Tf(x)-T(bf)(x).
\end{equation*}
  \item\label{it:fBd} Case \eqref{it:both} holds in particular if $f\in L^\infty(X)$ is boundedly supported.
  \item\label{it:f,gBd} If both $f,g\in L^\infty(X)$ are boundedly supported, then $\langle [b,T]f,g\rangle=\langle Tf,bg\rangle-\langle T(bf),g\rangle$ is well defined.
\end{enumerate}
\end{lemma}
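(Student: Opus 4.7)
This is a straightforward technical lemma that unpacks the algebraic identity defining the commutator under suitable integrability. The strategy is to interpret ``well defined'' as the absolute convergence of the corresponding integral, so that linearity and Fubini's theorem can be freely applied, and to invoke Lemma \ref{lem:Tf} to secure this absolute convergence whenever the inputs are boundedly supported functions.

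For \eqref{it:both}, the hypothesis that $Tf(x)$ and $T(bf)(x)$ are both well defined amounts to the finiteness of $\int_X|K(x,y)f(y)|\,d\mu(y)$ and $\int_X|K(x,y)b(y)f(y)|\,d\mu(y)$. Since $b\in L^1_{\mathrm{loc}}(X)$ is finite a.e., I would split $(b(x)-b(y))K(x,y)f(y) = b(x)K(x,y)f(y) - K(x,y)b(y)f(y)$ and estimate by the triangle inequality to conclude that the integral defining $[b,T]f(x)$ is absolutely convergent; the pointwise identity $[b,T]f(x) = b(x)Tf(x) - T(bf)(x)$ then follows by linearity of integration.

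For \eqref{it:fBd}, if $f\in L^\infty(X)$ is supported in a ball $B$ then $\mu(B)<\infty$ and $b\in L^1(B)$, so both $f$ and $bf$ belong to $L^1(X)$ with support in $B$. Lemma \ref{lem:Tf}\eqref{it:Tf(x)} applied to each then provides a common full-measure set of $x$ where $Tf(x)$ and $T(bf)(x)$ are simultaneously well defined, and part \eqref{it:both} applies.

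For \eqref{it:f,gBd}, once $f,g\in L^\infty(X)$ are boundedly supported, the pairs $(f,bg)$ and $(bf,g)$ lie in $L^\infty(X)\times L^1(X)$ and $L^1(X)\times L^\infty(X)$ respectively, each pair with joint bounded support. Lemma \ref{lem:Tf}\eqref{it:Tf,g} then yields absolutely convergent double integral representations of $\langle Tf,bg\rangle$ and $\langle T(bf),g\rangle$; combining this with \eqref{it:fBd} and Fubini's theorem legitimises integrating the pointwise identity from \eqref{it:both} against $g$, giving $\langle[b,T]f,g\rangle=\langle Tf,bg\rangle-\langle T(bf),g\rangle$. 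There is no real obstacle here: the only point requiring care is the observation that multiplying a boundedly supported $L^\infty$ function by $b\in L^1_{\mathrm{loc}}$ produces a boundedly supported $L^1$ function, which is exactly what is needed to stay inside the scope of Lemma \ref{lem:Tf}.
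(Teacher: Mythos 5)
Your proposal is correct and follows essentially the same route as the paper: part (i) is the algebraic splitting that the paper calls ``immediate from the definitions,'' and parts (ii) and (iii) are obtained, exactly as in the paper, by observing that $f, bf$ (resp.\ the pairs $(f,bg)$ and $(bf,g)$) are boundedly supported and lie in the right $L^1$/$L^\infty$ classes so that Lemma \ref{lem:Tf}\eqref{it:Tf(x)} and \eqref{it:Tf,g} apply. The extra detail you supply (absolute convergence, the common full-measure set, Fubini) is consistent with, and merely expands on, the paper's terser argument.
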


\begin{proof}
\eqref{it:both}  is immediate from the definitions.

\eqref{it:fBd}: Under this assumption both $f,bf\in L^1(X)$ are boundedly supported, and the claim follows from Lemma \ref{lem:Tf}\eqref{it:Tf(x)}.

\eqref{it:f,gBd}: This follows from Lemma \ref{lem:Tf}\eqref{it:Tf,g} applied to both boundedly supported pairs of functions $(f,bg)\in L^\infty(X)\times L^1(X)$ and $(bf,g)\in L^1(X)\times L^\infty(X)$ in place of $(f,g)$.
\end{proof}

In particular, Lemma \ref{lem:[b,T]} shows that, under the very general assumption $b\in L^1_{\operatorname{loc}}(X)$, the commutator $[b,T]$ is well defined on a class of test functions that is dense in $L^2(X)$. Hence, the question of extending $[b,T]$ to a bounded operator on $L^2(X)$ is equivalent to estimates on this dense test class. We are mainly interested in the stronger property that $[b,T]$ is not only bounded on $L^2(X)$ but belongs to the Schatten class $S^p=S^p(L^2(X))$ of certain compact operators on $L^2(X)$. Our main results will provide sufficient and necessary conditions for this in the following sense:
\begin{enumerate}[\rm(i)]
  \item If $b$ belongs to a suitable subclass of $L^1_{\operatorname{loc}}(X)$, then $[b,T]f(x)$ is well-defined for all $f\in L^2(X)$ and a.e.\ $x\in X$, and the operator $[b,T]$ thus defined belongs to $S^p$.
  \item If $b\in L^1_{\operatorname{loc}}(X)$ and the operator $[b,T]$, first defined on boundedly supported $f\in L^\infty(X)$ only, has an extension to a bounded linear operator on $L^2(X)$ of class $S^p$, then $b$ belongs to a suitable subclass of $L^1_{\operatorname{loc}}(X)$.
\end{enumerate}

%
%
%

\subsection{Fractional Sobolev norms and the Poincar\'e inequality}

Suppose that $1<p<\infty$ and $0<s<1$. The classical fractional Sobolev space $\dot{B}_{p,p}^s(\R^d)$ is defined as all locally integrable function $b$ on the Euclidean space $\R^d$ such that 
\begin{equation}\label{bRn}
	\|b\|_{\dot{B}_{p,p}^s(\R^d)}=\left(\int_{\R^d} \int_{\R^d} \frac{|b(x)-b(y)|^p}{|x-y|^{d+sp}} dx\,dy\right)^{\frac{1}{p}}<\infty.
\end{equation}
For many purposes (see e.g.~\cite{GKS}), its relevant extension to spaces of homogeneous type is defined by
\begin{equation}\label{b1}
	\|b\|_{\dot{B}_{p,p}^s(\mu)}=\left(\int_X \int_X \left(\frac{|b(x)-b(y)|}{\rho(x,y)^s}\right)^p\frac{d\mu(x)d\mu(y)}{V(x,y)}\right)^{\frac{1}{p}}<\infty,
\end{equation}
where the integrand involves both $\rho$ and $V$, i.e., the factor $|x-y|^d$ in \eqref{bRn} is interpreted as a volume $V(x,y)$, but the factor $|x-y|^{sp}$ as a distance $\rho(x,y)^{sp}$. 

 However, for the study of $\phi$-fractional integrals $T$, the following variant seems more natural, and this will be confirmed by its appearance in the characterizing conditions of Theorem \ref{T} below. For $p\in(1,\infty)$ and $\phi$ as in \eqref{eq:phixy}, let
 \begin{equation}\label{eq:newBp}
  \|b\|_{\B_{p}(\phi,\mu)}:=\left(\int_X \int_X \frac{|b(x)-b(y)|^p}{V(x,y)^2}\phi(x,y)^p d\mu(x)d\mu(y)\right)^{\frac{1}{p}}.
\end{equation}
When $\phi=\rho^\alpha$, we also denote
\begin{equation}\label{b2 variant}
\|b\|_{\widetilde{B}_{p}^{\alpha}(\mu)}:=\|b\|_{\B_{p}(\rho^\alpha,\mu)}
:=\left(\int_X \int_X \frac{|b(x)-b(y)|^p}{V(x,y)^2} \cdot \rho(x,y)^{p\alpha} d\mu(x)d\mu(y)\right)^{\frac{1}{p}}.
\end{equation} 
 However, in the volumic case, we adopt a different normalisation
 \begin{equation}\label{b2}
\|b\|_{\dot{B}_{p}^\eps(\mu)}:=\|b\|_{\B_{p}(V^{\frac1p-\eps},\mu)}
:=\left(\int_X \int_X \frac{|b(x)-b(y)|^p}{V(x,y)^{1+p\ep}}d\mu(x)d\mu(y)\right)^{\frac{1}{p}},	
\end{equation}
noting that $V^{p(\frac1p-\ep)-2}=V^{1-\ep p-2}=V^{-1-\ep p}$. The motivation of this normalisation is that the parameter $\ep$ in $\dot{B}_{p}^\eps(\mu)$ plays a similar role as the classical smoothness parameter $s$ in \eqref{b1}. Notably, if $(X,\rho,\mu)$ is Ahlfors $d$-regular, then
\begin{equation*}
  \dot B^s_{p,p}(\mu)=\dot B^{\frac sd}_p(\mu)=\widetilde B^{\frac d p-s}_p(\mu)\quad \text{if}\quad V\sim \rho^d.
\end{equation*}
With this normalisation, the volumic Besov space that appears in our results about $S^p$ properties of commutators will be 
 \begin{equation*}
  \|b\|_{\dot{B}_{p}^{\frac1p-\eps}(\mu)}:=\|b\|_{\B_{p}(V^{\eps},\mu)}
:=\left(\int_X \int_X \frac{|b(x)-b(y)|^p}{V(x,y)^2} V(x,y)^{\ep p}d\mu(x)d\mu(y)\right)^{\frac{1}{p}};	
\end{equation*}
except for the dimensional factor $d$, this is similar to the form of the classical results as in \eqref{eqJP}.
 

A similar space also featured in analogous results for singular integrals in \cite{H2}. The space denoted by $\dot B_p(\mu)$ in \cite{H2} corresponds to $\dot B_p^{\frac1p}(\mu)=\tilde B^0_p(\mu)=\B_p(1,\mu)$ in the present notation, taking $\phi\equiv 1$ in \eqref{eq:newBp}.

%
%

We also recall the Poincar\'e inequality, which plays a significant role in several aspect of analysis on metric spaces (see \cite{Heinonen}), and our main result below is no exception.

\begin{definition}\label{def:Poincare}
Let $s>1$.
A space $(X,\rho,\mu)$ is said to satisfy the $(1,s)$-Poincar\'e inequality if $\rho$ is a metric (i.e., $A_0=1$ in \eqref{quasitri}), and 
 there exists $\lambda \ge 1$ and $c_P$ such that for every Lipschitz function $f$ on $X$, every  $x\in X$ and $r>0$,
\begin{equation}\label{PI}
	\fint_{B(x,r)}|f-\langle f \rangle_{B(x,r)} | d\mu \leq c_P \cdot r \cdot \left( \fint_{B(x,\lambda r)}(\mathrm{lip} f)^{s} d\mu \right)^{1/s}, 
\end{equation}
where the pointwise Lipschitz constant $\mathrm{lip}f$ is defined as   
\begin{equation}
\mathrm{lip}f(x):=\underset{r\rightarrow 0}{\lim \inf}\,\,\,\underset{\rho(x,y)\leq r }{\sup}\frac{|f(x)-f(y)|}{r}.	
\end{equation}
\end{definition}

\begin{remark}\label{R1.8}
On metric spaces, the abundance of Lipschitz functions makes the Poincar\'e inequality a useful and non-trivial condition. 
To accommodate quasi-metric spaces, we incorporate the assumption that $\rho$ is a metric into the definition of the Poincar\'e inequality. This  allows us to use the phrase ``let $X$ satisfy the Poincar\'e inequality''  as a shorthand for ``let $X$ be a metric space that satisfies the Poincar\'e inequality''.

For any $s>1$, if $X$ satisfies the $(1,s)$-Poincar\'e inequality, then $X$ satisfies the $(1,t)$-Poincar\'e inequality for every $t\ge s$. This is immediate from H\"older's inequality.

If $X$ is a complete doubling space, then it also satisfies 	the $(1,t)$-Poincar\'e inequality for {\em some} $t<s$. This is a deeper theorem from \cite{KZ}; we will not need it in the present work.
\end{remark}

\subsection{The main results}

We are now ready to state our main result. We will write simply $S^p:=S^p(L^2(\mu))$, where the space $L^2(\mu)$ is understood from the context.

\begin{theorem}\label{T}
Let $(X,\rho,\mu)$ be a space of homogeneous type with a lower dimension $d>0$.
Let $\phi$ be as in \eqref{eq:phixy} with parameter $\ep\in(0,1)$ or $\alpha\in(0,\infty)$.
Suppose that $T$ is a $\phi$-fractional integral. 
Then the following conclusions hold for all $b \in L_{\mathrm{loc}}^1(X)$:
\begin{enumerate}[\rm(1)]

\item \label{T:p>2}If $p\in [2,\infty)$, $\ep \in (0,1)$, and $\alpha\in (0,\infty)$, then 
$$\Norm{[b,T_\ep]}_{S^p}\lesssim\Norm{b}_{\dot{B}_p^{\frac1p-\ep}(\mu)}
\qquad\mathrm{and}\qquad
\Norm{[b,\widetilde{T_{\alpha}}]}_{S^p}\lesssim\Norm{b}_{\widetilde{B}_{p}^{\alpha}(\mu)}.$$
 
\item \label{T:p<1} If $p\in (1,\infty)$, $\ep \in (0,\frac{1}{p})$, $\alpha=d \ep$, and $K$ is strongly non-degenerate (Definition \ref{D1'}), then 
$$\Norm{b}_{\dot{B}_p^{\frac1p-\ep}(\mu)}\lesssim\Norm{[b,T_\ep]}_{S^p}
\qquad\mathrm{and}\qquad
\Norm{b}_{\widetilde{B}_{p}^{\alpha}(\mu)} \lesssim \Norm{[b,\widetilde{T_{\alpha}}]}_{S^p} .$$

\item \label{T:1<p<d} If $d\in (1,\infty)$, $\ep \in (0,1-\frac{1}{d})$, $\alpha=d\ep $, and $X$ satisfies the $(1,\frac{d}{1+d\ep})$-Poincar\'e inequality,
then 
\begin{equation}\label{4.4}
	\dot{B}_{\frac{d}{1+d\ep}}^{\frac1d}(\mu) =\widetilde{B}_{\frac{d}{1+\alpha}}^{\alpha}(\mu)\equiv \{\mathrm{constants}\}.
	\end{equation}                                                                                                                                                                     
\end{enumerate}
The first result \eqref{T:p>2} does not require the existence of  lower dimension $d$.
\end{theorem}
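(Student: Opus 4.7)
My plan tackles the three parts in sequence, drawing on the upper-bound machinery of Section~\ref{sec:upper}, the oscillation characterization of Besov norms from Section~\ref{sec:Besov}, the complex median method of Section~\ref{sec:median}, and the rigidity tools of Section~\ref{sec:const}.

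For part \eqref{T:p>2}, the cornerstone is the Hilbert--Schmidt case $p=2$: the $S^2$-norm of an integral operator equals the $L^2(\mu\otimes\mu)$-norm of its kernel, so for $[b,T]$ with kernel $(b(x)-b(y))K(x,y)$ and the pointwise bound $|K(x,y)|\lesssim \phi(x,y)/V(x,y)$, I immediately obtain $\|[b,T]\|_{S^2}^2\lesssim \|b\|_{\B_2(\phi,\mu)}^2$, which specialises to $\|b\|_{\dot B_2^{1/2-\eps}(\mu)}^2$ or $\|b\|_{\widetilde B_2^\alpha(\mu)}^2$ for $\phi=V^{\eps}$ or $\phi=\rho^\alpha$. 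For general $p\in[2,\infty)$ I would use the Christ-type dyadic cubes of Section~\ref{sec:prelim} to decompose $b=\sum_Q\Delta_Q b$ via Haar-like projections and split $[b,T]=\sum_Q[\Delta_Q b,T]$. Grouping pairs $(x,y)$ according to whether $\rho(x,y)$ is comparable to $\ell(Q)$ yields, at each scale, a family of nearly rank-one blocks indexed by cubes, with block $S^p$-norm controlled by the local oscillation of $b$ times $\phi(x_Q,\ell(Q))$. The Schatten $p$-triangle inequality combined with the near-orthogonality of these blocks (so that block sums behave like $\ell^p$-sums of singular values for $p\ge 2$) then bounds the total by the $\ell^p$-sum of cube oscillations, which by Section~\ref{sec:Besov} is equivalent to $\|b\|_{\B_p(\phi,\mu)}$.

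For part \eqref{T:p<1}, the essential input is the strong non-degeneracy of $K$: for every ball $B(x_0,r)$, Definition~\ref{D1'} supplies a companion ball $B(y_0,r)$ at distance $\sim r$ on which $|K|\gtrsim \phi(x_0,r)/V(x_0,r)$ with complex argument pinned within $\eta\le\pi/9$ of a fixed unit direction. I would select a sparse sub-collection $\mathcal{Q}$ of dyadic cubes with pairwise disjoint supports and pairwise disjoint companions, and for each $Q\in\mathcal{Q}$ build unit $L^2$-functions $f_Q$, $g_Q$ supported on $Q$ and $B(y_Q,r_Q)$, respectively. The complex sign on $g_Q$ is chosen via the simplified Wei--Zhang median construction of Section~\ref{sec:median}, so that $|\langle[b,T]f_Q,g_Q\rangle|$ dominates $\phi(x_Q,r_Q) V(x_Q,r_Q)^{-1}$ times a median-based oscillation of $b$ on $Q$. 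Since the $f_Q$'s and $g_Q$'s are each orthonormal, compressing $[b,T]$ by the associated rank-$|\mathcal{Q}|$ projections yields a matrix whose diagonal entries are $\langle[b,T]f_Q,g_Q\rangle$, and boundedness of the diagonal projection on $S^p$ gives $(\sum_Q|\langle[b,T]f_Q,g_Q\rangle|^p)^{1/p}\lesssim\|[b,T]\|_{S^p}$. Summing and invoking the median-oscillation characterization of the Besov norms from Section~\ref{sec:Besov} produces $\|b\|_{\dot B_p^{1/p-\eps}(\mu)}$ or $\|b\|_{\widetilde B_p^\alpha(\mu)}$ as a lower bound. The main obstacle is precisely the $p<2$ regime, where mean-based oscillations are insufficient and the complex median is needed to generate the correct signs on the $g_Q$.

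For part \eqref{T:1<p<d}, assume $b$ lies in the critical space $\dot B_{d/(1+d\eps)}^{1/d}(\mu)$; the $\widetilde B$ case is parallel. Following the approach of Hyt\"onen--Korte \cite{HK}, adapted in Section~\ref{sec:const}, I would combine the $(1,d/(1+d\eps))$-Poincar\'e inequality with a Haj\l{}asz--Koskela chain of balls joining generic points $x,y\in X$: telescoping Poincar\'e along the chain and controlling ball-volume growth via the lower dimension bound $V(x,r)\gtrsim r^d$ produces a pointwise estimate for $|b(x)-b(y)|$ in terms of an integrated maximal Lipschitz (or upper-gradient) quantity associated with $b$. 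At the specific exponent $p=d/(1+d\eps)$, this estimate becomes scale-invariant and matches the Besov norm's integrand, so finiteness of $\|b\|_{\dot B_{d/(1+d\eps)}^{1/d}(\mu)}$ is only compatible with $\mathrm{lip}\,b\equiv 0$ a.e., forcing $b$ to be constant. The identification $\dot B_{d/(1+d\eps)}^{1/d}(\mu)=\widetilde B_{d/(1+\alpha)}^\alpha(\mu)$ then reduces to the trivial equality $\{\mathrm{constants}\}=\{\mathrm{constants}\}$.
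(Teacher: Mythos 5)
Your overall architecture matches the paper's (upper bounds from the kernel size bound, lower bounds from the complex median method plus an oscillation characterization of the Besov norm, constancy from the Poincar\'e inequality), but there is a genuine gap in your argument for part \eqref{T:p<1}. You propose to select a sparse subcollection $\mathcal Q\subset\mathscr D$ with pairwise disjoint supports so that the test functions $f_Q,g_Q$ are \emph{orthonormal}, and then to read off $\bigl(\sum_{Q\in\mathcal Q}|\langle [b,T]f_Q,g_Q\rangle|^p\bigr)^{1/p}\lesssim\|[b,T]\|_{S^p}$ from a diagonal projection. This cannot work as stated: the Besov norm is equivalent (Proposition \ref{P2}) to the $\ell^p$-sum of oscillations over \emph{all} dyadic cubes, and $\mathscr D$ cannot be split into boundedly many families with pairwise disjoint (dilated) supports, since every point lies in one cube of every generation; the functions $1_{E_Q}/\mu(B_Q)^{1/2}$ attached to nested cubes overlap and are far from orthogonal. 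So either you control only a sparse sub-sum of the oscillation norm (which does not recover $\|b\|_{\B_p(\phi,\mu)}$), or the orthonormality claim is false. The paper's resolution is the Rochberg--Semmes ``nearly weakly orthonormal'' machinery: Proposition \ref{P3.7} requires only $|e_Q|+|h_Q|\lesssim\mu(Q)^{-1/2}1_{B_Q^*}$ and bounds $\|\{\langle Ae_Q,h_Q\rangle\}_{Q\in\mathscr D}\|_{\ell^{p,q}}$ by $\|A\|_{S^{p,q}}$ times the $L^2\times L^2\to L^1$ norm of the associated bi-sublinear maximal operator. This lemma is the missing ingredient; with it, one takes $e_Q=1_{E_Q}/\mu(B_Q)^{1/2}$ and $h_Q=1_{F_Q}/\mu(B_Q)^{1/2}$ for the median-constructed sets of Corollary \ref{cor:mm} over the full grid, and no sparse selection is needed. (Also, the median method is needed for complex-valued $b$ at every $p$, not only for $p<2$.)

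For part \eqref{T:p>2}, your $p=2$ Hilbert--Schmidt computation agrees with the paper, but for $p>2$ you assert a ``near-orthogonality'' of dyadic blocks that makes block sums behave like $\ell^p$-sums of singular values; this is precisely the nontrivial content of the Janson--Wolff-type estimate that the paper does not re-prove but imports as Proposition \ref{P11.8} (= \cite[Proposition 5.8]{H2}), applied after factoring the commutator kernel as $B\cdot L$ with $L=1/V$ and $B=(b(x)-b(y))K(x,y)V(x,y)$. Your sketch would need to supply that orthogonality argument in full. For part \eqref{T:1<p<d}, you propose to re-derive the chaining/telescoping argument behind \cite[Proposition 4.1]{HK}; the paper instead reduces to that proposition by checking, via the lower-dimension bound $V(x,y)\lesssim_{B_0}\rho(x,y)^d$ on a fixed ball, that membership in the critical Besov space implies local integrability of $|b(x)-b(y)|^p\rho(x,y)^{-p}V(x,y)^{-1}$ at $p=\tfrac{d}{1+d\eps}$ --- this exponent bookkeeping is exactly the step your sketch glosses over with ``becomes scale-invariant,'' and it is where the hypothesis $\alpha=d\eps$ is used.
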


Combining the upper and lower bounds, we can further present the following corollary:

\begin{corollary}\label{C}
Let $(X,\rho,\mu)$ be a space of homogeneous type with a lower dimension $d>0$. Let $\ep \in (0,1)$ and $\alpha=d\ep$.
Suppose that $T$ is a strongly non-degenerate fractional integral operator with kernel $K$ satisfying (\ref{1.1}) through \eqref{Non-degenerate}. Then the following conclusions hold for all $b \in L_{\mathrm{loc}}^1(X)$:
\begin{enumerate}[\rm(1)]

\item\label{C:p>d} If $p\in [2,\infty) $ and $\ep\in (0,\frac{1}{p})$, then 
$$\begin{cases}
		[b,T_{\ep}]\in S^p &\iff  \,\,\, b \in \dot{B}_p^{\frac1p-\ep}(\mu),\\
		[b,\widetilde{T_{\alpha}}]\in S^p &\iff \,\,\, b \in \widetilde{B}_p^{\alpha}(\mu).
	\end{cases}$$
	
\item\label{C:p<d} If $p \in (1,2)$ and $\ep \in  (\max\{0,\frac{1}{p}-\frac{1}{d}\},\p )$, then
$$\begin{cases}
		[b,T_{\ep}]\in S^p &\Longrightarrow  \,\,\, b \in \dot{B}_p^{\frac1p-\ep}(\mu),\\
		[b,\widetilde{T_{\alpha}}]\in S^p &\Longrightarrow \,\,\, b \in \widetilde{B}_p^{\alpha}(\mu).
	\end{cases}$$

\item\label{C:const} If $p\in(0,d)$ and $\ep \in (0,1-\frac{1}{d}) \cap (0,\frac{1}{p}-\frac{1}{d}]$ and $X$ satisfies the $(1,\frac{d}{1+d\ep})$-Poincar\'e inequality,
then $[b,T_{\ep}]\in S^p$ or $[b,\widetilde{T_{\alpha}}]\in S^p$ if and only if $b$ is constant.

\end{enumerate}
\end{corollary}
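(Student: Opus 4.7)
The plan is to assemble the three parts of Theorem \ref{T}, with the strong non-degeneracy assumption in Corollary \ref{C} being precisely what is needed to invoke the lower bound (\ref{T:p<1}) and the constancy statement (\ref{T:1<p<d}); the upper bound (\ref{T:p>2}) requires no non-degeneracy at all and contributes only to part (\ref{C:p>d}).

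For part (\ref{C:p>d}) with $p\in[2,\infty)$ and $\eps\in(0,\tfrac{1}{p})$, I would derive the forward implication from Theorem \ref{T}(\ref{T:p>2}), whose hypotheses $p\in[2,\infty)$ and $\eps\in(0,1)$ are weaker than those of the corollary, and the reverse implication from Theorem \ref{T}(\ref{T:p<1}), whose requirements $p>1$ and $\eps\in(0,\tfrac{1}{p})$ are also met. The $\widetilde{T_\alpha}$ case follows by identical reasoning with $\widetilde{B}_p^{\alpha}$ in place of $\dot{B}_p^{\frac{1}{p}-\eps}$. For part (\ref{C:p<d}) with $p\in(1,2)$, the upper bound from (\ref{T:p>2}) is no longer available, so only the one-sided implication from (\ref{T:p<1}) survives; the additional hypothesis $\eps>\tfrac{1}{p}-\tfrac{1}{d}$ is equivalent to $p>\tfrac{d}{1+d\eps}$ and simply guarantees that the target Besov space does not collapse to the constants.

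Part (\ref{C:const}) is the only step that requires an idea beyond direct quotation. The reverse implication is immediate, since a constant $b$ yields $[b,T_\eps]=0$. For the forward implication, the plan is to invoke the Schatten monotonicity $S^p\hookrightarrow S^{p'}$ whenever $p\le p'$, which holds for all $p,p'\in(0,\infty)$ because singular value sequences inherit the embedding $\ell^p\subset\ell^{p'}$. Setting $p_0:=\tfrac{d}{1+d\eps}$, the hypothesis $\eps\in(0,\tfrac{1}{p}-\tfrac{1}{d}]$ is precisely $p\le p_0$, so $[b,T_\eps]\in S^{p_0}$. The dimensional restriction $\eps<1-\tfrac{1}{d}$ then gives $p_0>1$, and the associated smoothness index $\tfrac{1}{p_0}-\eps=\tfrac{1}{d}$ automatically lies in $(0,\tfrac{1}{p_0})$; hence Theorem \ref{T}(\ref{T:p<1}) applies at $p_0$ and delivers $b\in\dot{B}_{p_0}^{1/d}(\mu)=\dot{B}_{d/(1+d\eps)}^{1/d}(\mu)$. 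Since the Poincar\'e hypothesis carried by the corollary is exactly the one demanded by Theorem \ref{T}(\ref{T:1<p<d}), that last theorem identifies the critical Besov space with $\{\text{constants}\}$. The $\widetilde{T_\alpha}$ case runs verbatim, with $\widetilde{B}_{d/(1+\alpha)}^{\alpha}$ replacing $\dot{B}_{d/(1+d\eps)}^{1/d}$.

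The principal obstacle I anticipate is purely one of parameter hygiene: each Besov smoothness index, each endpoint relation, and each application of Schatten monotonicity must be verified to lie inside the ranges permitted by Theorem \ref{T}. Once $p_0=\tfrac{d}{1+d\eps}$ is recognised as the critical exponent at which the upper bound, the lower bound, and the constancy statement all meet, the corollary reduces to a short synthesis whose only genuinely new ingredient over Theorem \ref{T} itself is the self-improvement $S^p\subset S^{p_0}$ for $p\le p_0$.
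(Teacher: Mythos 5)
Your proposal is correct and follows essentially the same route as the paper: parts (1) and (2) are direct quotations of Theorem \ref{T}\eqref{T:p>2} and \eqref{T:p<1}, and part (3) uses the inclusion $S^p\subset S^{q}$ with $q=\frac{d}{1+d\ep}$ followed by Theorem \ref{T}\eqref{T:p<1} at the exponent $q$ and the constancy statement \eqref{4.4}. The parameter checks you flag (namely $q>1$ from $\ep<1-\frac1d$, and $\ep<\frac1q=\frac1d+\ep$) are exactly the ones the paper performs.
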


\begin{figure}[H]
\centering

\begin{minipage}[t]{0.47\textwidth}
\centering
\begin{tikzpicture}[scale=2.5]

\draw[->, thick] (-0.35,0) -- (1.3,0) node[right] {${\tfrac{1}{p}}$};
\draw[->, thick] (0,-0.8) -- (0,1.3) node[above] {${\ep}$};


\draw[thick] (0,0) -- (1,1) ;
\draw[thick] (0.7,0) -- (1,0.3) ;

\draw[dashed] (0,1) -- (1,1);
\draw[dashed] (0,0.5) -- (0.5,0.5);
\draw[dashed] (0,0.3) -- (1,0.3);
\draw[dashed] (0,0) -- (-0.3,-0.3);
\draw[dashed] (1,1) -- (1.2,1.2) node[right] {${\ep = \tfrac{1}{p}}$};
\draw[dashed] (1,0.3) -- (1.3,0.6) node[right] {$\ep = \tfrac{1}{p} - \tfrac{1}{d}$};
\draw[dashed] (-0.1,-0.8) -- (0.7,0);

\draw[thick] (0.7,0) -- (0.7,0.03) node[below=3pt] {$\tfrac{1}{d}$};
\draw[thick] (0.5,0) -- (0.5,0.03) node[below=3pt] {$\tfrac{1}{2}$};
\draw[thick] (1,0) -- (1,0.03) node[below=3pt] {1};
\draw[thick] (0,0.5) -- (0.03,0.5) node[left=7pt] {$\tfrac{1}{2}$};
\draw[thick] (0,-0.7) -- (0.02,-0.7); 
\draw[thick] (0,0.3) -- (0.03,0.3) node[left=2pt] {$1-\tfrac{1}{d}$};
\draw[thick] (0,1) -- (0.03,1) node[left=3pt] {1};

\node at (0.3,0.13) {\footnotesize{\eqref{C:p>d}}}; 
\node at (0.71,0.4) {\footnotesize{\eqref{C:p<d}}};
\node at (1.1,0.13) {\footnotesize{\eqref{C:const}}};

\draw[black, thick] (1,0.3) -- (1,1);
\draw[black, thick] (1,0.3) -- (1.3,0.3);
\draw[black, thick] (0.5,0.5) -- (0.5,0);
\end{tikzpicture}

\caption*{\textbf{(a)} Case: $0<d\leq 2$}
\end{minipage}
\hfill
\begin{minipage}[t]{0.47\textwidth}
\centering
\begin{tikzpicture}[scale=2.5]

\draw[->, thick] (-0.35,0) -- (1.3,0) node[right] {${\tfrac{1}{p}}$};
\draw[->, thick] (0,-0.8) -- (0,1.3) node[above] {${\ep}$};

\draw[dashed] (-0.3,-0.3) -- (1.2,1.2) node[right] {${\ep = \tfrac{1}{p}}$};
\draw[dashed] (-0.2,-0.5) -- (1.2,0.9) node[right] {$\ep = \tfrac{1}{p} - \tfrac{1}{d}$};

\draw[dashed] (0,1) -- (1,1);
\draw[dashed] (0,0.5) -- (0.5,0.5);
\draw[dashed] (0,0.7) -- (1,0.7);
\draw[dashed] (0,0.2) -- (1.3,0.2);

\draw[thick] (0.3,0) -- (0.3,0.03) node[below=3pt] {$\tfrac{1}{d}$};
\draw[thick] (0.5,0) -- (0.5,0.03) node[below=3pt] {$\tfrac{1}{2}$};
\draw[thick] (1,0) -- (1,0.03) node[below=3pt] {1};
\draw[thick] (0,0.5) -- (0.03,0.5) node[left=7pt] {$\tfrac{1}{2}$};
\draw[thick] (0,0.2) -- (0.03,0.2) node[left=1pt] {$\tfrac{1}{2}-\tfrac{1}{d}$};
\draw[thick] (0,-0.3) -- (0.02,-0.3); 
\draw[thick] (0,0.7) -- (0.03,0.7) node[left=2pt] {$1-\tfrac{1}{d}$};
\draw[thick] (0,1) -- (0.03,1) node[left=3pt] {1};
\draw[thick] (0.3,0) -- (1,0.7);
\draw[thick] (1,0.7) -- (1.3,0.7);

\node at (0.28,0.12) {\footnotesize{\eqref{C:p>d}}}; 
\node at (0.7,0.54) {\footnotesize{\eqref{C:p<d}}};
\node at (1,0.3) {\footnotesize{\eqref{C:const}}};

\draw[thick] (1,0.7) -- (1,1);
\draw[black, thick] (0.5,0.5) -- (0.5,0.2);
\draw[thick] (0,0) -- (1,1);
\end{tikzpicture}

\caption*{\textbf{(b)} Case: $d> 2$}
\end{minipage}

\caption{
The different parameter ranges for $(\ep,p,d)$ in Corollary \ref{C}.
In cases \eqref{C:p>d} and \eqref{C:const}, we have a characterization,
and in \eqref{C:p<d}, a necessary condition for
$[b,T_{\ep}]\in S^p$ and $[b,\widetilde{T_{\alpha}}]\in S^p$
(with $\alpha=d\ep$). The region below the dashed line $\ep = \tfrac{1}{2} - \tfrac{1}{d}$ in case $d> 2$ corresponds to the parameter ranges in Corollary \ref{C'}, where only cases \eqref{C:p>d} and \eqref{C:const} appear.
}
\label{fig'}
\end{figure}

\begin{remark}\label{R1.14}
We point out that the method of estimating the upper Schatten bounds of the fractional commutator $[b, T] $ in Corollary \ref{C1} is limited to the parameter $p\in [2,\infty)$. The reverse of Corollary \ref{C} \eqref{C:p<d}  for $p\in (1,2)$ will be addressed in a forthcoming work of the first author with L. Zacchini.
\end{remark}


\section{Preliminaries}\label{sec:prelim}

This section has two subsections, \ref{sec:lemmas} on basic lemmas related to fractional integrals, and \ref{sec:dyadic} on dyadic cubes.

\subsection{Basic lemmas about fractional integrals}\label{sec:lemmas}

Here, we provide the proofs of the lemmas stated in Section \ref{sec:def}, plus some additional ones to make those proofs more streamlined.

We begin with the sufficient condition for strong non-degeneracy stated in Lemma \ref{lem:K>0}:

\begin{proof}[Proof of Lemma \ref{lem:K>0}]
 The non-empty annulus assumption implies that, for every point $x_0\in X$, radius $r>0$, and parameter $A>0$ yet to be chosen, there is a point $y_0\in B(x_0,\overline CAr)\setminus B(x,Ar)$. We will show that any such point satisfies properties \eqref{eq:sndg1} and \eqref{eq:sndg2} provided that $A>0$ is large enough (independently of $x_0$ and $r$).
 
Assumption \eqref{eq:Ksim} implies in particular that $K(x,y)>0$ for all $x\neq y$, and hence the bound concerning the argument in \eqref{eq:sndg1} and \eqref{eq:sndg2} is trivial with $v=1$ and $\eta=0$. As for the size bounds in \eqref{eq:sndg1} and \eqref{eq:sndg2}, we note that both $\rho(x,y)\sim\rho(x_0,y_0)\approx Ar$ and $V(x,y)\sim V(x_0,y_0)\sim V(x_0,Ar)$ for all $x,x_0,y,y_0$ as in Definition \ref{D1'} of strong non-degeneracy, as soon as $A$ is large enough. Fixing such an $A$, we then have $Ar\sim r$ and $V(x_0,Ar)\approx V(x_0,r)$, which implies that
\begin{equation*}
  K(x,y)\sim\frac{\phi(x,y)}{V(x,y)}\sim\frac{\phi(x_0,y_0)}{V(x_0,y_0)}\sim\frac{\phi(x_0,r)}{V(x_0,r)},
\end{equation*}
and the case of $K(y,x)$ is entirely analogous.
\end{proof}

To support the results about the well-definedness of $Tf$ and $[b,T]f$, we first give:

\begin{lemma}\label{lem:intK}
 Let $\phi$ and $K$ satisfy \eqref{eq:phixy} and \eqref{1.1}.
If $f\in L^1_{\operatorname{loc}}(X)$, then
\begin{equation}\label{eq:intK<phi}
  \int_{B(x,r)}|K(x,y) f(y)|d\mu(y)\lesssim\phi(x,r)Mf(x),
\end{equation}
where $M$ is the Hardy--Littlewood maximal operator.
 \end{lemma}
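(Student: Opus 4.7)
The plan is first to reduce to bounding
\[
  I := \int_{B(x,r)} \frac{\phi(x,y)}{V(x,y)}\,|f(y)|\,d\mu(y),
\]
which dominates the left-hand side of \eqref{eq:intK<phi} by the size bound \eqref{1.1}. Then I would split into the two cases \eqref{eq:phixy} for $\phi$ and handle them separately, since the metric case $\phi(x,y)=\rho(x,y)^\alpha$ is straightforward but the volumic case $\phi(x,y)=V(x,y)^{\eps}$ needs an extra argument.

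In the metric case, I would do a standard annular decomposition $B(x,r)=\bigcup_{k\geq 0} A_k$ with $A_k=B(x,2^{-k}r)\setminus B(x,2^{-k-1}r)$. On $A_k$ we have $\rho(x,y)\leq 2^{-k}r$ and, by doubling, $V(x,y)\gtrsim V(x,2^{-k}r)$, so
\[
  \int_{A_k}\frac{\rho(x,y)^\alpha}{V(x,y)}|f(y)|\,d\mu(y)
  \lesssim \frac{(2^{-k}r)^\alpha}{V(x,2^{-k}r)}\int_{B(x,2^{-k}r)}|f|\,d\mu
  \leq (2^{-k}r)^\alpha Mf(x).
\]
Summing the geometric series in $k$ (which converges since $\alpha>0$) yields $I\lesssim r^\alpha Mf(x)=\phi(x,r)Mf(x)$.

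In the volumic case the naive annular decomposition would only give $I\lesssim Mf(x)\sum_k V(x,2^{-k}r)^{\eps}$, and doubling alone does not force this sum to converge or to be controlled by $V(x,r)^\eps$ without a lower-dimension hypothesis. This is the one genuine obstacle, and the plan is to avoid it by a layer-cake representation: since $\eps\in(0,1)$,
\[
  V(x,y)^{\eps-1}=(1-\eps)\int_0^\infty \mathbf{1}_{V(x,y)<t}\,\frac{dt}{t^{2-\eps}}.
\]
Plugging this into $I$ and applying Fubini, the inner integral is taken over $\{y\in B(x,r):V(x,y)<t\}$, which is a ball $B(x,\rho_0(t))$ of measure at most $t$ by the definition of $\rho_0(t):=\sup\{s:V(x,s)<t\}$. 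Hence that inner integral is bounded by $\min\{V(x,r),t\}\,Mf(x)$, and
\[
  \int_0^\infty \frac{\min\{V(x,r),t\}}{t^{2-\eps}}\,dt
  =\int_0^{V(x,r)} t^{\eps-1}\,dt + V(x,r)\int_{V(x,r)}^\infty t^{\eps-2}\,dt
  =\frac{V(x,r)^{\eps}}{\eps(1-\eps)}.
\]
Therefore $I\lesssim V(x,r)^{\eps} Mf(x)=\phi(x,r)Mf(x)$, completing the proof in both cases. The only real subtlety to watch, apart from the layer-cake trick, is the measure-theoretic detail that $\{V(x,\cdot)<t\}$ is indeed an (open) ball with measure at most $t$, which follows directly from monotonicity of $s\mapsto V(x,s)$.
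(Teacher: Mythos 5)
Your metric case is identical to the paper's proof (the same dyadic annular decomposition and geometric series). In the volumic case you take a genuinely different route: the paper runs a stopping-time selection of radii $r=r_0>r_1>r_2>\dots$ with $V(x,r_{k+1})<\tfrac12 V(x,r_k)\le V(x,2r_{k+1})$, so that the \emph{measures} (rather than the radii) decrease geometrically, with a separate treatment of a possible atom at $x$; you instead use the layer-cake identity $V(x,y)^{\eps-1}=(1-\eps)\int_0^\infty \mathbf{1}_{\{V(x,y)<t\}}\,t^{\eps-2}\,dt$ and Tonelli. Both approaches correctly circumvent the one real obstacle you identified (no lower-dimension hypothesis is available), and yours is arguably shorter and dispenses with the explicit atom case.

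There is, however, a genuine (though repairable) inaccuracy exactly at the point you flagged as a detail: the set $E_t:=\{y:V(x,y)<t\}$ need not be the open ball $B(x,\rho_0(t))$, and its measure can exceed $t$. Since $V(x,y)=\mu(B(x,\rho(x,y)))$ involves the \emph{open} ball, a point $y$ with $\rho(x,y)=\rho_0(t)$ can still satisfy $V(x,y)<t$, so $E_t$ may contain the entire sphere $\{y:\rho(x,y)=\rho_0(t)\}$, which can carry positive mass. Concretely, for $X=\Z$ with counting measure, $x=0$ and $t=\tfrac32$, one has $\rho_0(t)=1$ but $E_t=\{-1,0,1\}$ with $\mu(E_t)=3>t$. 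What is true, and what suffices, is that $E_t\subseteq\{y:\rho(x,y)\le\rho_0(t)\}\subseteq B(x,2\rho_0(t))$, while $\mu(B(x,\rho_0(t)))=\lim_{s\uparrow\rho_0(t)}V(x,s)\le t$, so doubling gives $\mu(B(x,2\rho_0(t)))\lesssim t$; hence the inner integral is still $\lesssim\min\{V(x,r),t\}\,Mf(x)$ with a constant depending only on the doubling constant, and the rest of your computation goes through unchanged. (The degenerate cases $\rho_0(t)=0$, where $E_t=\varnothing$, and $\rho_0(t)=\infty$, where the crude bound by $V(x,r)Mf(x)$ applies, are harmless.)
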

 
\begin{proof}
In the metric case $\phi(x,y)=\rho(x,y)^\alpha$, we have
\begin{equation*}
\begin{split}
   \int_{B(x,r)}|K(x,y) f(y)|d\mu(y)
   &\sim \sum_{k=0}^\infty \int_{B(x,2^{-k}r)\setminus B(x,2^{-k-1}r)}\frac{ (2^{-k} r)^\alpha}{V(x,2^{-k} r)}|f(y)|d\mu(y) \\
   &\leq  \sum_{k=0}^\infty (2^{-k} r)^\alpha Mf(x) \sim r^\alpha Mf(x).
\end{split}
\end{equation*}
In the volumic case $\phi(x,y)=V(x,y)^\eps$, we choose a decreasing sequence $(r_k)_{k=0}^K$, where $K\in\N\cup\{\infty\}$, recursively as follows: Let $r_0:=r$. Given $r_k$, we look for a $j\in\N$ such that $V(x,2^{-j}r_k)<\frac12 V(x,r_k)$. If no such $j\in\N$ exists, it means that $V(x,2^{-j}r_k)\geq\frac12 V(x,r_k)$ for all $j\in\N$, and hence $\mu(\{x\})=\lim_{k\to\infty}V(x,2^{-j}r_k)\geq\frac12 V(x,r_k)$. In this case, the process stops at this finite $k=:K$; note that this only happens if $\mu(\{x\})>0$. Otherwise, we let $r_{k+1}:=2^{-j}r_k$. Then, by construction,
\begin{equation*}
 V(x,r_{k+1})<\frac12 V(x,r_k)
 \leq V(x,2r_{k+1})\leq CV(x,r_{k+1})
\end{equation*}
for all $k<K$, and $\mu( \{ x\} )\leq V(x,r_K)\leq 2 \mu(\{ x\})$. Thus we have the following, where the term involving $B(x,r_K)$ is omitted if $K=\infty$:
\begin{equation*}
\begin{split}
  &\int_{B(x,r)} |K(x,y) f(y)|d\mu(y) \\
  &=\sum_{k=0}^{K-1}\int_{B(x,r_k)\setminus B(x,r_{k+1})} V(x,y)^{\eps-1}d\mu(y)+\int_{B(x,r_K)}V(x,y)^{\eps-1} |f(y)|d\mu(y) \\
  &\lesssim\sum_{k=0}^{K-1} V(x,r_{k+1})^{\eps-1}\int_{B(x,r_k)}|f(y)|d\mu(y)+\mu(\{x\})^{\eps-1}\int_{B(x,r_K)}|f(y)|d\mu(y) \\
  &\sim\Big(\sum_{k=0}^{K-1} V(x,r_k)^{\eps}+V(x,r_K)^{\eps}\Big)Mf(x)
  \leq\sum_{k=0}^K (2^{-k}V(x,r))^{\eps}Mf(x)\sim V(x,r)^\eps Mf(x).
\end{split}
\end{equation*}
Thus, in both cases, we obtain \eqref{eq:intK<phi}.
\end{proof}

We can now provide the proof of Lemma \ref{lem:Tf} that we already stated in Section \ref{sec:def}.

\begin{proof}[Proof of Lemma \ref{lem:Tf}]
\eqref{it:Tf(x)}: Given $x\in X$, the assumption implies that $f$ is supported in $B(x,r)$ for some $r$. Hence the integral in \eqref{1.3} can be restricted to $B(x,r)$, and Lemma \ref{lem:intK} guarantees that this integral exists provided that $Mf(x)<\infty$. By the weak $(1,1)$ inequality of $M$, this happens at almost every $x\in X$.

\eqref{it:Tf,g}: We can choose some $B=B(x_0,r)$ such that both $f$ and $g$ are supported in $B$. Moreover, if $x,y\in B(x_0,r)$, then $y\in B(x,2A_0 r)$. 
It then follows from Lemma \ref{lem:intK} that $|Tf(x)g(x)|\lesssim\phi(x,2A_0 r) Mf(x) |g(x)|\lesssim\phi(x_0,r)Mf(x)|g(x)|$. If $p>1$, then $Mf\in L^p(X)$, and hence this product is integrable. On the other hand, writing out the double integral defining $\langle Tf,g\rangle$, we also have the alternative bound
\begin{equation*}
\begin{split}
  \int_X |f(y)|\int_{B(y,2A_0 r)}|K(x,y)| |g(x)|d\mu(x) d\mu(y)
  &\lesssim\int_X |f(y)| \phi(y,2A_0 r) Mg(y) d\mu(y) \\
  &\lesssim \phi(x_0,r)\int_X |f(y)|  Mg(y) d\mu(y),
\end{split}
\end{equation*}
noting that the conditions \eqref{eq:phixy} and \eqref{1.1} are, up to constants, symmetric in $x$ and $y$, so that Lemma \ref{lem:intK} also applies with the roles of these variables interchanged. If $p'>1$, then $Mg\in L^{p'}(X)$, and the product above is integrable. Since at least one of $p,p'$ is greater than~$1$, we can always apply at least one of these alternative bounds, both of which lead to the same result \eqref{eq:Tf,g}.
\end{proof}

\subsection{Dyadic cubes}\label{sec:dyadic}

Some of our results will make use of the notion of systems of dyadic cubes in a space of homogeneous type $(X,\rho,\mu)$.
Recall that the standard system of dyadic cubes on the Euclidean space $\mathbb{R}^d$ is defined as
$$\mathscr{D}:=\{2^{-k}([0,1)^d+m):k\in \mathbb{Z},m\in \mathbb{Z}^d\}.$$
The fundamental properties of these cubes are that any two of them are either disjoint or one is contained in the
other, and that the cubes of a given size partition all space. As for general spaces of homogeneous type, a more general construction was first provided by Christ \cite{C} and elaborated by Hyt\"{o}nen and Kairema \cite{HKa}, as follows.
\begin{definition}\label{De2.4}
A system of dyadic cubes $\mathscr{D}$, on the space of homogeneous type $(X,\rho,\mu)$ is a collection
 $$\mathscr{D}=\bigcup_{k\in \mathbb{Z}}\mathscr{D}_k,$$
 where	
 
 (1) for each $k\in \mathbb{Z}$, there is a disjoint union $X=\bigcup_{Q \in \mathscr{D}_k}Q$;
 
 (2) each $\mathscr{D}_{k+1}$ refines the previous $\mathscr{D}_k$;
 
 (3) for parameters $\delta \in (0,1)$ and $0<\widetilde{c_0}\leq \widetilde{C_0} <\infty$, each $Q\in \mathscr{D}_k$ is essentially a ball of size ${\delta}^k$, in the sense that, for some ``centre'' $z_Q \in X$,
 \begin{equation}\label{dyadic system}
 B(z_Q, \widetilde{c_0}\delta^k)\subseteq Q \subseteq B(z_Q, \widetilde{C_0}\delta^k):=B_Q. 
 \end{equation}
%
\end{definition}

We denote by $l(Q):=\delta^k$ the ``side length'' of $Q\in\mathscr D_k$.

Next, we will use the following notations from \cite{H2} to describe the relationship of different levels of dyadic cubes in Section 4. 
For $Q\in \mathscr{D}$, we denote by $Q^{[1]}$
 the minimal $R \in \mathscr{D}$ such that $Q\subsetneq R$. We refer to $Q^{[1]}$ as the strict parent of $Q$ and $Q^{[1]}$ exists unless $Q=X$. Denote $Q^{[0]}:=Q$ and $Q^{[j]}:=(Q^{[j-1]})^{[1]}$ for each $j\in \mathbb{Z}$.
By the equivalent size between one cube and its strict parent in Lemma 6.7, \cite{H2}: there exists constants $1<c\leq C<\infty$ such that for all $Q\in \mathscr{D}\setminus \{X\} $,
$$c\mu(Q)\leq \mu(Q^{[1]})\leq C\mu(Q).$$
By iterating the above estimate, it implies the following useful result: 
For every $\gamma >0$, 
\begin{equation}\label{eq3.15}
\sum_{k=0}^{\infty}\frac{1}{\mu(Q^{[k]})^{\gamma}}
\leq \sum_{k=0}^{\infty}\frac{1}{c^{\gamma k}\mu(Q)^{\gamma}}=\frac{c^{\gamma}}{c^{\gamma}-1}\frac{1}{\mu(Q)^{\gamma}}.
\end{equation}
If the space of homogeneous type $(X,\rho,\mu)$ has a lower dimension $d>0$, 
then 
\begin{equation*}
l(Q^{[k]}) \sim \delta^{-k} l(Q),
\,\,\, \mu(Q^{[k]})\gtrsim \delta^{-kd}\mu(Q),
\end{equation*}
and hence for any $s,t>0$ satisfying $s<dt$,
\begin{equation} \label{eq4.10}
\sum_{k=0}^{\infty}\frac{l(Q^{[k]})^{s}}{\mu(Q^{[k]})^{t}}
\lesssim \frac{l(Q)^{s}}{\mu(Q)^{t}}
\sum_{k=0}^{\infty} \delta^{k(dt-s )}
\lesssim \frac{l(Q)^{s}}{\mu(Q)^{t}}.
\end{equation}


\section{The upper bounds for the fractional commutators}\label{sec:upper}

For $p\in(2,\infty)$,  Hyt\"{o}nen \cite{H2} obtained $S^p$ estimates for a class of general integral operators on spaces of homogeneous type, by extending an idea from Janson and Wolff \cite{JW} on the Euclidean space $\mathbb{R}^d$. For $p=2$ (Hilbert-Schmidt operators), it is classical.
 Based on these results, we give the following $S^p$ estimates for all $p \in [2,\infty)$.
Let $(X,\rho,\mu)$ be a $\sigma$-finite measure space.

\begin{proposition}\label{P11.8}
Let $(X,\rho,\mu)$ be a space of homogeneous type and $L$ be a kernel satisfying the size condition for any $x,y \in X$,
\begin{equation}\label{3.14}
|L(x,y)|\lesssim\frac{1}{V(x,y)}.
\end{equation}
Then for any measurable function $B$ on $X \times X$ and exponent $p\in [2,\infty)$, the integral operator $\mathcal{I}_{BL}$:
$$\mathcal{I}_{BL}f(x)=\int_X B(x,y)L(x,y)f(y)d\mu(y),\,\,\,\forall x \in X,$$
satisfies
\begin{equation}\label{upper-bound-eq1}
\| \mathcal{I}_{BL}\|_{S^p}\lesssim \|B\|_{L^p(V^{-2})}:=\left(\iint_{X\times X}\frac{|B(x,y)|^p}{V(x,y)^2}d\mu(x)d\mu(y) \right)	^{\frac{1}{p}}.
\end{equation}
\end{proposition}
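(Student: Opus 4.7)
My plan splits into the easy case $p=2$ and the harder case $p\in(2,\infty)$. For $p=2$, the Hilbert--Schmidt identity combined with $|L|\lesssim V^{-1}$ gives directly
\begin{equation*}
\|\mathcal{I}_{BL}\|_{S^2}^2 = \iint_{X\times X}|B(x,y) L(x,y)|^2\,d\mu(x)\,d\mu(y)\;\lesssim\;\iint\frac{|B|^2}{V^2}\,d\mu\,d\mu = \|B\|_{L^2(V^{-2})}^2,
\end{equation*}
which is the desired estimate. For $p\in(2,\infty)$, I will use a two-stage dyadic decomposition: first by scale of the kernel, then by spatial location of the row variable.

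Fix a system of dyadic cubes $\mathscr{D}=\bigcup_{k}\mathscr{D}_k$ on $(X,\rho,\mu)$. Partition $X\times X$ into the annular regions $A_k=\{(x,y):\delta^{k+1}\le\rho(x,y)<\delta^k\}$, let $\mathcal{I}_k$ be the operator with kernel $BL\cdot\mathbf{1}_{A_k}$, and refine $\mathcal{I}_k=\sum_{Q\in\mathscr{D}_k}P_Q$ by restricting the row variable to lie in $Q$, so $P_Q$ has kernel $\mathbf{1}_Q(x)BL(x,y)\mathbf{1}_{A_k}(x,y)$. Doubling yields both $V(x,y)\sim\mu(Q)$ and $|\supp(P_Q)|\sim\mu(Q)^2$. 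Within a fixed scale $k$, the ranges of the $P_Q$ lie in the mutually orthogonal subspaces $L^2(Q)$, so the singular values of $\mathcal{I}_k$ are the union of those of the $P_Q$, giving $\|\mathcal{I}_k\|_{S^p}^p=\sum_{Q\in\mathscr{D}_k}\|P_Q\|_{S^p}^p$. To estimate each $P_Q$, I will use $\|\cdot\|_{S^p}\le\|\cdot\|_{S^2}$ (valid for $p\ge 2$), then the Hilbert--Schmidt identity with $V\sim\mu(Q)$, and finally H\"older's inequality with exponents $(p/2,p/(p-2))$ applied to $\iint_{\supp(P_Q)}|B|^2\cdot 1$, exploiting $|\supp(P_Q)|\sim\mu(Q)^2$. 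This yields
\begin{equation*}
\|P_Q\|_{S^p}^p\;\le\;\|P_Q\|_{S^2}^p\;\lesssim\;\mu(Q)^{-2}\iint_{\supp(P_Q)}|B|^p\,d\mu\,d\mu\;\sim\;\iint_{\supp(P_Q)}\frac{|B|^p}{V^2}\,d\mu\,d\mu.
\end{equation*}
Since the supports $\supp(P_Q)$ are pairwise disjoint across $Q\in\mathscr{D}_k$ and together exhaust $A_k$, summing gives $\|\mathcal{I}_k\|_{S^p}^p\lesssim\iint_{A_k}|B|^p V^{-2}\,d\mu\,d\mu$.

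The hard part will be the final summation across scales. Since cubes at different scales are nested, the operators $\mathcal{I}_k$ for different $k$ do \emph{not} have mutually orthogonal ranges, and the naive triangle inequality $\|\mathcal{I}_{BL}\|_{S^p}\le\sum_k\|\mathcal{I}_k\|_{S^p}$ is too weak (it would only give $\sum_k a_k^{1/p}$, not $(\sum_k a_k)^{1/p}$, of the scale contributions). To overcome this, I would recast the whole decomposition in a Rochberg--Semmes framework: represent $\mathcal{I}_{BL}=\sum_{Q\in\mathscr{D}} a_Q\,(u_Q\otimes v_Q^*)$ as an almost-orthogonal sum of rank-one operators indexed by cubes at all scales, where $(u_Q)$ and $(v_Q)$ are $L^2$-Bessel systems built from Haar-type functions adapted to $\mathscr{D}$ (the cancellation of the Haar functions across scales being what secures the Bessel property). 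The unconditionality of such rank-one systems in $S^p$ for $p\ge 2$ then gives $\|\mathcal{I}_{BL}\|_{S^p}^p\lesssim\sum_Q|a_Q|^p$, and a Littlewood--Paley-type identification on $(X,\rho,\mu)$ identifies $\sum_Q|a_Q|^p$ with $\|B\|_{L^p(V^{-2})}^p$, completing the proof. This scale-summation step is the technical crux; the within-scale analysis above is comparatively routine once the dyadic structure is in place.
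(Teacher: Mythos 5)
Your $p=2$ case is exactly the paper's argument (Hilbert--Schmidt identity plus $|L|\lesssim V^{-1}$) and is fine. For $p\in(2,\infty)$ the paper simply invokes \cite[Proposition 5.8]{H2}, whereas you attempt to reprove that result from scratch; your attempt has two genuine gaps.

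First, the within-scale step is wrongly justified. Orthogonality of the \emph{ranges} alone does not make the singular values of $\mathcal{I}_k=\sum_{Q\in\mathscr{D}_k}P_Q$ the union of those of the $P_Q$: take $T_1=e_1\otimes f^*$, $T_2=e_2\otimes f^*$ with $e_1\perp e_2$; then $T_1+T_2$ has the single singular value $\sqrt{2}\,\|f\|^2$, so $\|T_1+T_2\|_{S^p}^p=2^{p/2}\|f\|^{2p}>2\|f\|^{2p}=\|T_1\|_{S^p}^p+\|T_2\|_{S^p}^p$ for $p>2$, i.e.\ even the inequality you need fails when only the ranges are orthogonal. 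In your setting the $y$-supports of the $P_Q$ at a fixed scale have bounded overlap, so the estimate $\|\mathcal{I}_k\|_{S^p}^p\lesssim\sum_Q\|P_Q\|_{S^p}^p$ can be rescued by splitting $\mathscr{D}_k$ into boundedly many colour classes within which both the $x$- and $y$-supports are pairwise disjoint; but this argument is not the one you give, and as written the step is false.

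Second, and more seriously, the cross-scale summation --- which you yourself identify as the crux --- is not actually carried out, and the repair you sketch does not exist. A general kernel $B(x,y)L(x,y)$ restricted to a block $Q\times\{y:\rho(y,Q)\sim l(Q)\}$ is not a rank-one operator, so there is no representation $\mathcal{I}_{BL}=\sum_{Q\in\mathscr{D}}a_Q\,(u_Q\otimes v_Q^*)$ with a single scalar $a_Q$ per cube, Haar-type or otherwise; consequently the asserted ``Littlewood--Paley-type identification'' of $\sum_Q|a_Q|^p$ with $\|B\|_{L^p(V^{-2})}^p$ has no content. What the Rochberg--Semmes/Janson--Wolff machinery (and \cite[Proposition 5.8]{H2}) actually does is expand \emph{each block} via its singular value decomposition into infinitely many rank-one pieces whose left and right factors form nearly weakly orthonormal families controlled by the maximal function, and then invoke the $\ell^p\to S^p$ bound for such families when $p\geq 2$; this is precisely the lemma your outline is missing. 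Since this is the entire content of the $p>2$ case, the proposal as it stands does not prove the proposition for $p>2$.
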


\begin{proof}
For $p\in (2,\infty)$, this is \cite[Proposition 5.8]{H2}. For $p=2$,
 an explicit formula of the Hilbert-Schmidt norm of integral operators (see e.g.\ \cite[Theorem 2.11]{Simon}) and (\ref{3.14}) give that
\begin{equation*}
  	\| \mathcal{I}_{BL}\|_{S^2}=\|BL\|_{L^2}
	\lesssim \|BV^{-1}\|_{L^2}= \|B\|_{L^2(V^{-2})}. 
\end{equation*}
Thus, we complete the proof of (\ref{3.14}).  
\end{proof}

From Proposition \ref{P11.8}, one immediately has the following two corollaries involving the Schatten class of the fractional commutators.

\begin{corollary}\label{C1}
Let $p\in [2,\infty)$, $\ep \in (0,1)$ and $\alpha \in (0,\infty)$. Let $(X,\rho,\mu)$ be a space of homogeneous type.
Let $T\in\{T_\ep,\widetilde T_\alpha\}$ be a $\phi$-fractional integral with kernel $K$.
Then the corresponding fractional commutator 
$$[b,T]f(x)=\int_X [b(x)-b(y)]K(x,y)f(y)d\mu(y)$$
belongs to the Schatten class $S^{p}$, provided that the respective right-hand side below is finite, and we have the estimates
\begin{equation}\label{upper-bound-eq2}
	\|[b,T_{\ep}]\|_{S^p} 
	\leq 
	\|b\|_{\dot{B}_{p}^{\p-\ep}(\mu)}=\left(\int_X \int_X 
	\frac{|b(x)-b(y)|^p}{V(x,y)^{1-p\ep}}\cdot \frac{d\mu(x)d\mu(y)}{V(x,y)}\right)^{\frac{1}{p}},
\end{equation}
and
\begin{equation}\label{upper-bound-eq2 variant}
	\|[b,\widetilde{T_{\alpha}}]\|_{S^p} 
	\leq 
	\|b\|_{\widetilde{B}_{p}^{\alpha}(\mu)}=\left(\int_X \int_X \frac{|b(x)-b(y)|^p \rho(x,y)^{p\alpha}}{V(x,y)}\cdot \frac{d\mu(x)d\mu(y)}{V(x,y)}\right)^{\frac{1}{p}}.	
\end{equation}

\end{corollary}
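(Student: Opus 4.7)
The plan is to deduce Corollary \ref{C1} directly from Proposition \ref{P11.8} by a simple factorization of the commutator kernel. Writing the kernel of $[b,T]$ as $[b(x)-b(y)]K(x,y)$, I would factor it pointwise as the product $B(x,y)\cdot L(x,y)$, where
\[
B(x,y):=[b(x)-b(y)]\,\phi(x,y),\qquad L(x,y):=\frac{K(x,y)}{\phi(x,y)}
\]
(with the convention $0/0:=0$ on the diagonal, consistent with \eqref{1.1}). The factor $B$ absorbs the $\phi$-weight from the fractional kernel bound, while $L$ carries only the purely ``singular'' part. By \eqref{1.1} one has $|L(x,y)|\leq C_K/V(x,y)$ for all $x,y\in X$, so $L$ satisfies the size condition \eqref{3.14} required by Proposition \ref{P11.8}. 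Since $b(x)-b(y)=0$ on the diagonal, the $0/0$ convention is harmless and the operator $[b,T]$ coincides with $\mathcal I_{BL}$ in the notation of that proposition.

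Applying Proposition \ref{P11.8} then yields at once
\[
\|[b,T]\|_{S^p}\lesssim \|B\|_{L^p(V^{-2})} = \left(\iint_{X\times X}\frac{|b(x)-b(y)|^p\,\phi(x,y)^p}{V(x,y)^2}\,d\mu(x)\,d\mu(y)\right)^{1/p},
\]
which is already the common form of the two estimates \eqref{upper-bound-eq2} and \eqref{upper-bound-eq2 variant}. To finish, I would identify this right-hand side with the claimed Besov norms in the two cases: when $\phi(x,y)=\rho(x,y)^\alpha$ it is exactly $\|b\|_{\widetilde{B}_p^\alpha(\mu)}$ by \eqref{b2 variant}, and when $\phi(x,y)=V(x,y)^\ep$ it matches $\|b\|_{\dot{B}_p^{\frac1p-\ep}(\mu)}=\|b\|_{\B_p(V^\ep,\mu)}$ by taking $\phi=V^\ep$ in \eqref{eq:newBp}.

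I would expect no serious obstacle in this proof, as the real analytic content is already packaged in Proposition \ref{P11.8} (which in turn reduces to the case $p=2$ via \cite[Proposition 5.8]{H2} combined with the Hilbert--Schmidt formula). The only mildly delicate points are (i)~the diagonal convention in the definition of $L$, resolved by the vanishing of $b(x)-b(y)$ there, and (ii)~matching the normalisation: the factor $V(x,y)^{p\ep}/V(x,y)^2$ produced by the factorization is precisely what the slightly unusual normalisation of $\dot B_p^{\frac1p-\ep}(\mu)$ was chosen to reproduce, so no additional manipulation is required.
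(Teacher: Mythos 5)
Your proof is correct and follows essentially the same route as the paper: both factor the commutator kernel as $B\cdot L$ with $|L|\lesssim 1/V$ and invoke Proposition \ref{P11.8}. The only (immaterial) difference is the choice of split — the paper takes $L=1/V$ and $B=(b(x)-b(y))K(x,y)V(x,y)$, while you take $L=K/\phi$ and $B=(b(x)-b(y))\phi$ — and both yield the identical bound $\|B\|_{L^p(V^{-2})}$ equal to the claimed Besov norms.
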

\begin{proof}
Taking $L(x,y)=\frac{1}{V(x,y)}$ and $B(x,y)=(b(x)-b(y)) K(x,y)\cdot V(x,y)$ in (\ref{upper-bound-eq1}), it follows from Proposition \ref{P11.8} that 
\begin{equation*}
\Norm{[b,T]}_{S^p}  = \| \mathcal{I}_{BL}\|_{S^p}
\lesssim 
\left(\iint_{X\times X}
|b(x)-b(y)|^p \frac{\phi(x,y)^p}{V(x,y)^p}V(x,y)^{p-2}d\mu(x)d\mu(y) \right)	^{\frac{1}{p}},
\end{equation*}
where
\begin{equation*}
  \frac{\phi(x,y)^p}{V(x,y)^p}V(x,y)^{p-2}
  =\frac{\phi(x,y)^p}{V(x,y)^2}
  =\begin{cases}
    V(x,y)^{p\ep-2}, & \text{if }\phi(x,y)=V(x,y)^\ep, \\
    \rho(x,y)^{p\alpha}V(x,y)^{-2}, & \text{if }\phi(x,y)=\rho(x,y)^\alpha.
  \end{cases}
\end{equation*}
Then we get (\ref{upper-bound-eq2}) and \eqref{upper-bound-eq2 variant}, as desired.
\end{proof}

\section{Characterization for fractional oscillation spaces and Besov spaces}\label{sec:Besov}

If $p\in (1,\infty)$, Hyt\"{o}nen \cite{H2}
 described Besov spaces $\dot{B}_{p}(\mu) $ by oscillation spaces $\mathrm{Osc}^{p}(\mu)$ (see \cite{H2}, Proposition 1.29), namely
 \begin{align*}
 	& \|b\|_{\dot{B}_{p}(\mu)} :=\left(\int_X \int_X \frac{|b(x)-b(y)|^p}{V(x,y)^{2}}d\mu(x)d\mu(y)\right)^{\frac{1}{p}}\\
& \thickapprox
 \|b\|_{\mathrm{Osc}^p(\mu)}:=\Big\| \Big\{  \fint_{B_Q}|b-\langle b\rangle_{B_Q} |d\mu\Big\}_{Q\in \mathscr{D}} \Big\|_{\ell^p}.
 \end{align*}
For the fractional version, an analogous is stated below in Propositions \ref{P2}. First, we give a useful characterisation from \cite[Lemma 11.4]{H2}:

\begin{lemma}[\cite{H2}, Lemma 11.4]
For any $p\in (0,\infty)$,
\begin{equation}\label{eq3.13}
\begin{aligned}
 \inf_c \Big(\fint_{B}|b- c |^p d\mu\Big)^{\frac{1}{p}}
 & \sim    \Big(\fint_{B} \fint_{B} |b(x)- b(y)|^p d\mu(x) d \mu(y)\Big)^{\frac{1}{p}}\\
 & \sim \Big(\fint_{B}|b(x)-\langle b\rangle_{B} |^p d\mu(x)\Big)^{\frac{1}{p}},\,\, \text{if} \,\,p\in [1,\infty).
\end{aligned}	
\end{equation}
\end{lemma}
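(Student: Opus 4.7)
The plan is to prove both equivalences in \eqref{eq3.13} by bounding each pair of quantities via elementary inequalities. For clarity, denote
$$A := \inf_c \Big(\fint_{B}|b- c |^p d\mu\Big)^{1/p},\quad D := \Big(\fint_{B} \fint_{B} |b(x)- b(y)|^p d\mu(x) d \mu(y)\Big)^{1/p},\quad C := \Big(\fint_{B}|b-\langle b\rangle_{B} |^p d\mu\Big)^{1/p}.$$

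First I would establish $A \sim D$ for all $p \in (0,\infty)$. For $A \lesssim D$, I freeze $y \in B$ and observe that $\fint_B |b(x)-b(y)|^p d\mu(x) \geq A^p$ by taking $c = b(y)$ in the definition of $A$; integrating in $y$ and applying Fubini gives $D^p \geq A^p$. For $D \lesssim A$, for any admissible $c$ I apply the elementary bound $|b(x)-b(y)|^p \leq C_p(|b(x)-c|^p + |b(y)-c|^p)$ (with $C_p = 2^{p-1}$ for $p \geq 1$ and $C_p = 1$ for $0 < p \leq 1$), integrate in both variables, use symmetry, and take the infimum over $c$ to obtain $D^p \leq 2 C_p A^p$.

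Next, for $A \sim C$ when $p \in [1,\infty)$, the bound $A \leq C$ is immediate by testing $c = \langle b \rangle_B$ in the definition of $A$. For the reverse, I pick a near-minimiser $c^*$ (i.e.\ such that $(\fint_B |b-c^*|^p d\mu)^{1/p}$ approximates $A$) and estimate $|c^* - \langle b \rangle_B|$ using Jensen's inequality:
$$|c^* - \langle b \rangle_B| = \Big|\fint_B (c^* - b)\, d\mu\Big| \leq \Big(\fint_B |c^* - b|^p d\mu\Big)^{1/p}.$$
Minkowski's inequality (which requires $p \geq 1$) then gives
$$C \leq \Big(\fint_B |b - c^*|^p d\mu\Big)^{1/p} + |c^* - \langle b \rangle_B| \leq 2A,$$
completing the equivalence.

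There is no serious obstacle here; the argument is essentially routine. The only subtle point worth highlighting is why the third quantity $C$ appears only in the range $p \geq 1$: Minkowski's inequality fails for $p < 1$, and more fundamentally the mean $\langle b \rangle_B$ is no longer a near $L^p$-minimiser in this range (already at $p = 1$ the minimiser is a median of $b$ on $B$, not the mean), so the step comparing $c^*$ with $\langle b \rangle_B$ via Jensen genuinely breaks down.
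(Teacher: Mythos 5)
Your argument is correct and complete: the comparison $A\sim D$ via the test constant $c=b(y)$ in one direction and the $p$-triangle inequality $|u+v|^p\le \max(1,2^{p-1})(|u|^p+|v|^p)$ in the other, together with the Jensen--Minkowski comparison of a near-minimiser $c^*$ with $\langle b\rangle_B$ for $p\ge 1$, is exactly the standard proof of this fact. The paper itself gives no proof here (it only cites \cite[Lemma 11.4]{H2}), and your write-up, including the remark on why the mean-based quantity is restricted to $p\ge 1$, matches the expected argument.
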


Consistently with \eqref{eq:phixy}, for any cube $Q$, we denote
\begin{equation}\label{phi}
\phi(Q)\in \{\mu(B_Q)^{\ep} ,l(Q)^{\alpha}\}.	
\end{equation}
For $p\in (1,\infty)$, we denote   
\begin{equation}\label{Osc}
		 \|b\|_{{\mathrm{Osc}}_{\phi}^p(\mu)}:=\Big\| \Big\{ {\phi(Q)} \fint_{B_Q}|b-\langle b\rangle_{B_Q} |d\mu\Big\}_{Q\in \mathscr{D}} \Big\|_{\ell^p} .
		 \end{equation}

\begin{proposition}\label{P2}
Let $(X,\rho,\mu)$ be a space of homogeneous type with a lower dimension $d>0$ and $b\in L_{\mathrm{loc}}^1(\mu)$. Let $\mathscr{D}$ be a system of dyadic cubes on $(X,\rho,\mu)$ in the sense of Definition \ref{De2.4} and, for each $Q\in \mathscr{D}$, let $B_Q$ be a ball centred at $Q$ and of radius $c\cdot l(Q)$ for a constant $c$ that only depends on the space $X$.
Let $p\in(1,\infty)$, and $\phi$ be as in \eqref{eq:phixy} and \eqref{phi} with $\ep\in(0,1/p)$ and $\alpha\in(0,d/p)$. Then
\begin{equation*}
  \Norm{b}_{\B_p(\phi,\mu)}
  \sim \|b\|_{{\mathrm{Osc}}_{\phi}^p(\mu)}.
\end{equation*}
%
\end{proposition}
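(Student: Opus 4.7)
The plan is to prove the two inequalities separately, both using the dyadic system $\mathscr{D}$ and the mean--oscillation identity \eqref{eq3.13}. For the easier direction $\|b\|_{\mathrm{Osc}_\phi^p(\mu)}\lesssim \|b\|_{\mathcal{B}_p(\phi,\mu)}$, I would apply \eqref{eq3.13} to replace $\fint_{B_Q}|b-\langle b\rangle_{B_Q}|\,d\mu$ by $(\fint_{B_Q}\fint_{B_Q}|b(x)-b(y)|^p\,d\mu\,d\mu)^{1/p}$ for each $Q$, raise to the $p$-th power, multiply by $\phi(Q)^p$, and swap the sum over $Q$ with the double integral in $(x,y)$. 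The remaining core estimate is
\[
\sum_{Q\in\mathscr{D}:\;x,y\in B_Q}\frac{\phi(Q)^p}{\mu(B_Q)^2}\lesssim \frac{\phi(x,y)^p}{V(x,y)^2}.
\]
Since $x,y\in B_Q$ forces $l(Q)\gtrsim\rho(x,y)$ with only boundedly many such $Q$ per dyadic scale (by doubling), the left-hand side collapses to a geometric series across the scales $\delta^k\gtrsim\rho(x,y)$, which converges by the lower-dimension property \eqref{dD'} together with the assumptions $\ep<1/p$ (volumic) or $\alpha<d/p$ (metric).

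For the reverse direction, I would partition pairs $(x,y)$ by a dyadic ``common ancestor'' cube $Q(x,y)$ of scale $\sim\rho(x,y)$ with $x,y\in B_{Q(x,y)}$, on which $\phi(x,y)\sim\phi(Q)$ and $V(x,y)\sim\mu(B_Q)$. Grouping the integral by $Q$ and applying \eqref{eq3.13} to the inner double integral yields
\[
\|b\|_{\mathcal{B}_p(\phi,\mu)}^p\lesssim \sum_{Q\in\mathscr{D}}\phi(Q)^p\fint_{B_Q}|b-\langle b\rangle_{B_Q}|^p\,d\mu.
\]
The delicate step is to pass from the \emph{inner} $p$-th moment above to the \emph{outer} $p$-th power $(\fint|b-\langle b\rangle|)^p$ appearing in the $\mathrm{Osc}_\phi^p$-norm. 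For this I would use a martingale-type telescoping: for a.e.\ $x\in Q$, Lebesgue differentiation gives $b(x)-\langle b\rangle_{B_Q}=\sum_{j\geq 0}(\langle b\rangle_{B_{Q_{j+1}(x)}}-\langle b\rangle_{B_{Q_j(x)}})$, where $Q_j(x)$ is the descendant of $Q$ at depth $j$ containing $x$, and each increment is $\lesssim a_{Q_j(x)}:=\fint_{B_{Q_j(x)}}|b-\langle b\rangle_{B_{Q_j(x)}}|\,d\mu$ by doubling. A weighted H\"older inequality $(\sum_j c_j)^p\lesssim \sum_j \beta^j c_j^p$ with $\beta>1$ converts the telescoping chain into a single $p$-th-power sum; averaging over $x\in Q$ and reindexing via the ancestor relation $Q=R^{[j]}$ gives
\[
\sum_Q \phi(Q)^p\fint_{B_Q}|b-\langle b\rangle_{B_Q}|^p\,d\mu \lesssim \sum_R a_R^p\,\mu(R)\sum_{j\geq 0}\beta^j\frac{\phi(R^{[j]})^p}{\mu(B_{R^{[j]}})}.
\]
Using \eqref{dD'} and \eqref{eq4.10}, the inner sum collapses to $\phi(R)^p/\mu(B_R)$ provided $\beta>1$ can be chosen so that $\beta\delta^{d(1-p\ep)}<1$ in the volumic case or $\beta\delta^{d-p\alpha}<1$ in the metric case; these are precisely the places where the hypotheses $\ep<1/p$ and $\alpha<d/p$ are essential. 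Since $\mu(R)\sim\mu(B_R)$, the right-hand side simplifies to $\sum_R \phi(R)^p a_R^p=\|b\|_{\mathrm{Osc}_\phi^p(\mu)}^p$.

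The main obstacle is this second direction, specifically the chained passage from inner to outer $p$-th powers: the weight $\beta^j$ in the weighted H\"older step must be tuned simultaneously against the growth of $\phi(R^{[j]})^p$ and of $\mu(B_{R^{[j]}})$ across scales, and the resulting summability window is exactly what forces the parameter ranges $\ep<1/p$ and $\alpha<d/p$ imposed in the proposition.
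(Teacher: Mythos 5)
Your proposal is correct and follows essentially the same route as the paper: the paper splits the equivalence into Propositions \ref{P3.10} and \ref{P3.11}, and your first direction together with the "common ancestor" reduction reproduces Proposition \ref{P3.10} exactly (the two-sided estimate for $\sum_{Q}1_{B_Q}(x)1_{B_Q}(y)\phi(Q)^p\mu(B_Q)^{-2}$ via \eqref{eq3.15} and \eqref{eq4.10}). The only divergence is in the inner-to-outer $p$-th power passage, where the paper invokes the Carleson-type lemma \cite[Proposition 11.3]{H2} ($\lambda_p(Q)\lesssim(\mathrm{Car}(\lambda_1)^p)^{1/p}(Q)$) and then sums $\sum_k\phi(S^{[k]})^p\mu(S)/\mu(S^{[k]})$ over ancestors, whereas you prove the same embedding inline by martingale telescoping plus the weighted H\"older inequality $(\sum_j c_j)^p\lesssim\sum_j\beta^j c_j^p$; both hinge on the identical geometric decay across generations and the same parameter windows $\ep<1/p$, $\alpha<d/p$, so your argument is a self-contained substitute for the cited lemma rather than a different method.
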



Proposition \ref{P2} will be a consequence of the following Propositions \ref{P3.10} and \ref{P3.11}. We note that the range of the admissible parameters is larger in Proposition \ref{P3.10}.

\begin{proposition}\label{P3.10}
Suppose that the space of homogeneous type $(X,\rho,\mu)$ has a lower dimension $d\in (0,\infty)$.
Let $p\in(1,\infty)$ and let $\phi$ be as in \eqref{eq:phixy} and \eqref{phi} with $\ep\in(0,\frac{2}{p})$ and $\alpha \in (0,\frac{2d}{p})$. For all measurable $b$ on $X$, we have 
\begin{equation*}
  \Norm{b}_{\B_p(\phi,\mu)}\sim 
  \Big\| \Big\{ \phi(Q)^{\ep} 
	 \inf_c \Big(\fint_{B_Q}|b- c |^p d\mu\Big)^{\frac{1}{p}}\Big\}_{Q\in \mathscr{D}} \Big\|_{\ell^p}.
\end{equation*}
%
\end{proposition}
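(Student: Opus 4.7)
The plan is to apply \eqref{eq3.13} to replace $\inf_c(\fint_{B_Q}|b-c|^p\,d\mu)^{1/p}$ on the right-hand side by the equivalent double average $(\fint_{B_Q}\fint_{B_Q}|b(x)-b(y)|^p\,d\mu\,d\mu)^{1/p}$, so that the proposition reduces to showing
\begin{equation*}
\Norm{b}_{\B_p(\phi,\mu)}^p\;\sim\;\sum_{Q\in\mathscr{D}}\phi(Q)^p\fint_{B_Q}\fint_{B_Q}|b(x)-b(y)|^p\,d\mu(x)\,d\mu(y).
\end{equation*}
From this reformulation, each direction becomes a Fubini-type interchange of integration organised around the dyadic system.

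For the direction $\Norm{b}_{\B_p(\phi,\mu)}^p\lesssim\sum_Q\cdots$, I would choose, for every pair $(x,y)$ with $x\neq y$, a single cube $Q(x,y)\in\mathscr D$ satisfying $x,y\in B_{Q(x,y)}$ and $l(Q(x,y))\sim\rho(x,y)$; such a cube exists provided the dilation constant $c$ in the definition of $B_Q$ is taken large enough in terms of $A_0$ and the parameters of the dyadic system in \eqref{dyadic system}. For such $Q$ one has $V(x,y)\sim\mu(B_{Q(x,y)})$ and $\phi(x,y)\sim\phi(Q(x,y))$, and partitioning the double integral according to $Q(x,y)=Q$, together with the inclusion $\{(x,y):Q(x,y)=Q\}\subset B_Q\times B_Q$, yields the cube-sum majorant directly. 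For the reverse direction, I would use Fubini to rewrite the cube sum as
\begin{equation*}
\iint_{X\times X}|b(x)-b(y)|^p\Bigl(\sum_{Q\in\mathscr D,\,x,y\in B_Q}\frac{\phi(Q)^p}{\mu(B_Q)^2}\Bigr)d\mu(x)\,d\mu(y),
\end{equation*}
so the claim reduces to the pointwise estimate
\begin{equation*}
\sum_{Q\in\mathscr D,\,x,y\in B_Q}\frac{\phi(Q)^p}{\mu(B_Q)^2}\;\lesssim\;\frac{\phi(x,y)^p}{V(x,y)^2}.
\end{equation*}
Since any cube $Q$ with $x,y\in B_Q$ satisfies $l(Q)\gtrsim\rho(x,y)$ and only boundedly many such cubes appear at each dyadic level, this left-hand side is controlled by $\sum_{k\geq 0}\phi(Q_0^{[k]})^p/\mu(Q_0^{[k]})^2$ for a minimal such cube $Q_0=Q_0(x,y)$.

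The main obstacle, and the reason for the parameter restrictions, is the convergence of this dyadic-chain sum. In the metric case $\phi(Q)=l(Q)^\alpha$ the sum becomes $\sum_k l(Q_0^{[k]})^{p\alpha}/\mu(Q_0^{[k]})^2$, whose bound by the $k=0$ term is exactly \eqref{eq4.10} with $s=p\alpha$ and $t=2$, requiring $p\alpha<2d$. In the volumic case $\phi(Q)=\mu(B_Q)^{\ep}$ the sum becomes $\sum_k\mu(Q_0^{[k]})^{-(2-p\ep)}$, controlled by \eqref{eq3.15} with $\gamma=2-p\ep>0$, requiring $\ep<2/p$. Combining either bound with $l(Q_0)\sim\rho(x,y)$ and $\mu(B_{Q_0})\sim V(x,y)$ returns exactly $\phi(x,y)^p/V(x,y)^2$. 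The lower-dimension hypothesis is essential here, since it is what allows \eqref{eq4.10} and \eqref{eq3.15} to transfer information between different cube scales; this is the single nontrivial step, everything else being routine Fubini and dyadic manipulations.
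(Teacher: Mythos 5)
Your proposal is correct and follows essentially the same route as the paper: reduce via \eqref{eq3.13} to the double-average form, interchange sum and integral to compare the kernel $\sum_{Q}1_{B_Q}(x)1_{B_Q}(y)\phi(Q)^p\mu(B_Q)^{-2}$ pointwise with $\phi(x,y)^pV(x,y)^{-2}$, get the lower bound from a single well-chosen cube with $l(Q)\sim\rho(x,y)$, and the upper bound by summing over dyadic ancestors of boundedly many minimal cubes using \eqref{eq3.15} (volumic case, $\gamma=2-p\ep$) and \eqref{eq4.10} (metric case, $s=p\alpha$, $t=2$). The only cosmetic difference is that you phrase the easy direction as a partition of $X\times X$ by the selected cube rather than as a pointwise lower bound on the kernel sum; both are equivalent.
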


\begin{proof}
By \eqref{eq3.13}, it follows that
\begin{equation}\label{eq4.8}
\begin{aligned}
 & \Big\| \Big\{ \phi(Q)
	 \inf_c \Big(\fint_{B_Q}|b- c |^p d\mu\Big)^{\frac{1}{p}}\Big\}_{Q\in \mathscr{D}} \Big\|_{\ell^p}^p\\
	 & \sim 	 \Big\| \Big\{ \phi(Q)
	 \Big(\fint_{B_Q} \fint_{B_Q} |b(x)- b(y)|^p d\mu(x) d \mu(y)\Big)^{\frac{1}{p}}
	 \Big\}_{Q\in \mathscr{D}} \Big\|_{\ell^p}^p\\
	 &= \sum_{Q\in \mathscr{D}}{\phi(Q) ^{p}} \fint_{B_Q} \fint_{B_Q} |b(x)- b(y)|^p d\mu(x) d \mu(y)\\
	 & = \int_X \int_X |b(x)-b(y)|^p \sum_{Q\in \mathscr{D}}
	 1_{B_Q}(x) 1_{B_Q}(y)\cdot \frac{ \phi(Q)^p}{{\mu(B_Q)}^{2}} d\mu(x) d \mu(y).
\end{aligned}
\end{equation}

Next, we consider the sum in the above formula. 
On the one hand,
there is a dyadic cube $Q$ which contains $x$ and $\ell(Q)\sim \rho(x,y)$. Under this condition, we obtain that the corresponding $B_Q$ contains both $x$ and $y$, satisfying $\mu(B_Q)\sim V(x,y)$. Hence, for this $Q$,
\begin{equation*}
\phi(Q) =
	\left.\begin{cases}
		\mu(B_Q)^{\ep} &\sim\ V(x,y)^{\ep} \\
		l(Q)^{\alpha} &\sim\ \rho(x,y)^\alpha
	\end{cases}\right\}
	=\phi(x,y),
	\qquad
	\frac{\phi(Q)^p}{\mu(B_Q)^2}\sim \frac{\phi(x,y)^p}{V(x,y)^2}
\end{equation*}
according to the two cases in \eqref{eq:phixy} and \eqref{phi}, 
and thus 
\begin{equation}\label{eq4.9}
\begin{aligned}
\sum_{Q\in \mathscr{D}}
	 1_{B_Q}(x) 1_{B_Q}(y)\cdot \frac{ \phi(Q)^p}{{\mu(B_Q)}^{2}} 
	 \gtrsim  \frac{\phi(x,y)^p}{V(x,y)^2}.
	 \end{aligned}
	 \end{equation}
%
	 
On the other hand, for any $x,y\in B_Q$ with $x\neq y$, it follows that $\ell(Q)\gtrsim \rho(x,y)$. For this type of dyadic cube $Q$, it contains a minimal cube belonging to the same type and there are at most boundedly many such minimal cubes thanks to the doubling property. Then every $Q$ appearing in the sum is a dyadic ancestor $P^{[k]}$ of some minimal cube $P$. 
Now we consider the following two cases.
 
	 {\bf{Case 1:}} $\phi(Q)=\mu(B_Q)^{\ep}$ with  $\ep \in (0,\frac{2}{p})$. Taking $Q=P$ and $\gamma=2-p\ep>0$ in (\ref{eq3.15}), we get 
$$\sum_{k=0}^{\infty}\frac{1}{\mu(P^{[k]})^{2-p\ep}}
\lesssim \frac{1}{\mu(P)^{2-p\ep}}\sim \frac{1}{V(x,y)^{2-p\ep}}.$$	 
Summing over the boundedly many minimal $P$, we conclude that 
\begin{equation}\label{eq4.12}
\sum_{Q\in \mathscr{D}}
	 \frac{1_{B_Q}(x) 1_{B_Q}(y)}{{\mu(B_Q)}^{2-p\ep}} \lesssim \frac{1}{V(x,y)^{2-p\ep}}=\frac{\phi(x,y)^p}{V(x,y)^2}.
	 \end{equation}
	 
	 {\bf{Case 2:}} $\phi(Q)=l(Q)^{\alpha}$ with $\alpha \in (0,\frac{2d}{p})$. Taking $Q=P$, $s=p\alpha$ and $t=2$ in \eqref{eq4.10}, 
	 noting that $p\alpha<p\frac{2d}{p}=2d$,
	 we get 
\begin{equation*}
\sum_{k=0}^{\infty}\frac{l(P^{[k]})^{p\alpha}}{\mu(P^{[k]})^{2}}
\lesssim \frac{l(P)^{p\alpha}}{\mu(P)^{2}}
\thickapprox \frac{\rho(x,y)^{p\alpha}}{V(x,y)^{2}}.
\end{equation*}
Summing over the boundedly many minimal $P$, we conclude that for any $\alpha \in (0,\frac{2d}{p})$,
\begin{equation} \label{4.13}
\sum_{Q\in \mathscr{D}}
	   1_{B_Q}(x) 1_{B_Q}(y) \cdot  \frac{{l(Q)}^{p\alpha}}{{\mu(B_Q)}^{2}} 
	 \lesssim \frac{\rho(x,y)^{p\alpha}}{V(x,y)^{2}}=\frac{\phi(x,y)^p}{V(x,y)^2} 
	 \end{equation}
By \eqref{eq4.9} through \eqref{4.13}, we find that 
\begin{align*}
	\sum_{Q\in \mathscr{D}}
	 1_{B_Q}(x) 1_{B_Q}(y)\cdot \frac{ \phi(Q)^p}{{\mu(B_Q)}^{2}} 
	  \sim \frac{\phi(x,y)^p}{V(x,y)}.
\end{align*}
Substituting this into \eqref{eq4.8}, we finish the proof of Proposition \ref{P3.10}.  
\end{proof}

\begin{proposition}\label{P3.11}
Suppose that the space of homogeneous type $(X,\rho,\mu)$ has a lower dimension $d\in (0,\infty)$.
Let $p\in(1,\infty)$ and $\phi$ be as in \eqref{eq:phixy} and \eqref{phi} with $\ep\in(0,\frac{1}{p})$ and $\alpha\in (0,\frac{d}{p})$. For the oscillatory norm $\Norm{\cdot }_{\mathrm{Osc}_{\phi}^p(\mu)}$ defined in \eqref{Osc} and any $b\in  L_{\mathrm{loc}}^1(\mu) $, we have 
\begin{equation}\label{eqP3.11}
	\begin{aligned}
		 \Big\| \Big\{ \phi(Q)
	 \inf_c \Big( \fint_{B_Q}|b- c |^p d\mu\Big)^{\frac{1}{p}}\Big\}_{Q\in \mathscr{D}} \Big\|_{\ell^p}
	 \sim 
	 		 \|b\|_{\mathrm{Osc}_{\phi}^p(\mu)}.
	\end{aligned}
\end{equation}
\end{proposition}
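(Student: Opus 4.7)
The easy direction ``$\|b\|_{\mathrm{Osc}_{\phi}^p(\mu)} \lesssim$ LHS of \eqref{eqP3.11}'' is immediate. Writing $a_Q := \fint_{B_Q}|b - \langle b\rangle_{B_Q}|d\mu$ and $A_Q := \inf_c (\fint_{B_Q}|b-c|^p d\mu)^{1/p}$, the triangle inequality $|b(x) - \langle b\rangle_{B_Q}| \le |b(x)-c| + |c - \langle b\rangle_{B_Q}|$ followed by averaging gives $a_Q \le 2 \fint_{B_Q}|b-c|d\mu \le 2 (\fint_{B_Q}|b-c|^p d\mu)^{1/p}$ by Jensen; taking the infimum over $c$ yields $a_Q \le 2 A_Q$ for every $Q$, and the $\ell^p$-inequality follows.

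For the hard direction, my plan is a telescoping-plus-weighted-H\"older argument. For a.e. $x \in Q \in \mathscr{D}$, the martingale convergence $b(x) = \lim_k \langle b\rangle_{R_k(x)}$ along the chain $R_0(x) = Q, R_1(x), R_2(x), \ldots$ of dyadic descendants of $Q$ containing $x$ telescopes into
\begin{equation*}
|b(x) - \langle b\rangle_{B_Q}| \lesssim \sum_{R \in \mathscr{D},\, R \subseteq Q,\, R \ni x} a_R,
\end{equation*}
each increment $|\langle b\rangle_{B_R} - \langle b\rangle_{B_{R^{[1]}}}|$ being $\lesssim a_{R^{[1]}}$ by the doubling property and $\mu(B_R) \sim \mu(B_{R^{[1]}})$. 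With an auxiliary weight $\phi(R)^{-\theta}$ for a parameter $\theta > 0$ to be chosen, H\"older's inequality yields
\begin{equation*}
\Big(\sum_{R \ni x,\, R \subseteq Q} a_R\Big)^{\!p} \le \Big(\sum_R \phi(R)^{-\theta p} a_R^p \mathbf{1}_R(x)\Big) \Big(\sum_R \phi(R)^{\theta p'} \mathbf{1}_R(x)\Big)^{\!p-1},
\end{equation*}
and the second factor, summed along the descendant chain (where $\phi(R_k) \lesssim \delta^{kd\ep} \phi(Q)$ in the volumic case or $\lesssim \delta^{k\alpha}\phi(Q)$ in the metric case), is a geometric series converging to $\lesssim \phi(Q)^{\theta p}$ precisely because $\theta > 0$.

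Next, integrate over $B_Q$, multiply by $\phi(Q)^p$, sum over $Q \in \mathscr{D}$, and interchange the resulting order of summation to obtain
\begin{equation*}
\sum_Q (\phi(Q) A_Q)^p \lesssim \sum_R \phi(R)^{-\theta p} \mu(R) a_R^p \sum_{k \ge 0} \frac{\phi(R^{[k]})^{(\theta+1)p}}{\mu(R^{[k]})}.
\end{equation*}
By the lower-dimension bound $\mu(R^{[k]}) \gtrsim \delta^{-kd}\mu(R)$, the inner ancestor sum is geometric and converges to $\lesssim \mu(R)^{(\theta+1)p\ep - 1}$ in the volumic case (resp.\ $\lesssim l(R)^{(\theta+1)p\alpha}\mu(R)^{-1}$ in the metric case) provided $(\theta+1)p\ep < 1$ (resp.\ $(\theta+1)p\alpha < d$). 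The joint constraints $\theta > 0$ (for the descendant sum) and $(\theta+1)p\ep < 1$ (for the ancestor sum) admit a common solution iff $\ep < 1/p$; analogously $\alpha < d/p$ in the metric case—which are precisely the standing hypotheses. Once such $\theta$ is fixed, the remaining $\phi$- and $\mu$-powers collapse to $\sum_Q (\phi(Q) A_Q)^p \lesssim \sum_R (\phi(R) a_R)^p = \|b\|_{\mathrm{Osc}_\phi^p(\mu)}^p$. The main delicate point is exactly this joint optimization of the H\"older exponent $\theta$, whose feasibility is equivalent to the stated parameter ranges $\ep < 1/p$ and $\alpha < d/p$.
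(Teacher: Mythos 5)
Your argument is correct, and it follows a genuinely different route from the paper for the hard inequality. The paper invokes \cite[Proposition 11.3]{H2} as a black box: this gives the sharp local estimate $\lambda_p(Q)\lesssim\big(\frac{1}{\mu(Q)}\sum_{S\subseteq Q}\lambda_1(S)^p\mu(S)\big)^{1/p}$ (the ``$\theta=0$'' case of your inequality), which cannot be obtained by your na\"ive H\"older step --- with $\theta=0$ your second factor $\sum_{R\ni x}\phi(R)^{\theta p'}\mathbf 1_R(x)$ is an infinite sum of $1$'s --- and instead requires a John--Nirenberg/stopping-time mechanism. You replace that sharp input by a lossy, self-contained substitute: telescoping plus H\"older with an auxiliary weight $\phi(R)^{-\theta}$, $\theta>0$, which costs an extra factor $\phi(Q)^{\theta p}$ locally but is then absorbed in the ancestor sum precisely because the hypotheses $\ep<1/p$, $\alpha<d/p$ are strict, so that $(1+\theta)p\ep<1$ (resp.\ $(1+\theta)p\alpha<d$) still holds for small $\theta$. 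Both proofs then finish with the same geometric ancestor-sum computation (\eqref{eq3.15} and \eqref{eq4.10}), and your feasibility analysis correctly reproduces the stated parameter ranges. What the paper's route buys is brevity and a sharper intermediate estimate; what yours buys is independence from \cite{H2} at the price of consuming the slack in the exponents.

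One point to make explicit: your telescoping chain consists of dyadic descendants of $Q$ containing $x$, so it only covers $x\in Q$, whereas the quantity to be bounded averages over the larger ball $B_Q$; moreover each increment $|\langle b\rangle_{B_{R_{k+1}}}-\langle b\rangle_{B_{R_k}}|$ is controlled by the oscillation over a fixed dilate $CB_{R_k}$ rather than over $B_{R_k}$ itself, since $B_{R_{k+1}}\subseteq B_{R_k}$ need not hold. Both issues are resolved by the standard device of covering $B_Q$ by the boundedly many same-generation cubes meeting it and passing through a common ancestor a bounded number of levels up, together with the comparability of the $\ell^p$ oscillation norms for different dilation constants; since each cube arises boundedly often in this procedure, the $\ell^p$ bookkeeping goes through. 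This should be said (the paper hides the same bookkeeping inside its citation), but it does not affect the substance of your proof.
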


\begin{proof}
For a given $Q \in \mathscr{D}$ and any $r \in (0,\infty)$ and $\ep \in (0,1)$, we define
\begin{align*}
\lambda_{r}^{\phi}(Q)
&:= \inf_c \phi(Q)  \Big( \fint_{Q}|b- c |^r d\mu\Big)^{\frac{1}{r}}\\
&= \phi(Q) \inf_c  \Big( \fint_{Q}|b- c |^r d\mu\Big)^{\frac{1}{r}}
=: \phi(Q) \lambda_{r}(Q)
\end{align*}
For a sequence $\lambda=(\lambda_Q)_{Q\in \mathscr{D}}$ of numbers, we define the new sequence Car$\lambda$ by 
$$\mathrm{Car}\lambda(P):=\frac{1}{\mu(P)}\sum_{Q\subseteq P}|\lambda_Q|\mu(Q).$$
Taking $r=p$ and $s=1$ in \cite[Proposition 11.3]{H2},  we get for any given $Q \in \mathscr{D}$,
\begin{equation}\label{eqCar}
\lambda_{p}(Q) 
\lesssim \big(\mathrm{Car}(\lambda_1)^p\big)^{\frac{1}{p}}(Q)
=\Big(\frac{1}{\mu(Q)}\sum_{S \subseteq Q} \lambda_1(S)^p \mu(S)\Big)^{\frac{1}{p}}.
\end{equation}
It implies that 
\begin{equation}\label{eq3.14}
\begin{aligned}
\|\{\lambda_{p}^{\phi}(Q)\} & _{Q \in \mathscr{D}}\|_{\ell^p}^p 
\lesssim \| \{\phi(Q) \big(\mathrm{Car}(\lambda_1)^p\big)^{\frac{1}{p}}(Q) \}_{Q \in \mathscr{D}}\|_{\ell^p}^p
=\sum_{Q\in \mathscr{D}}\phi(Q)^{p} \mathrm{Car}(\lambda_1)^p(Q)\\
& = \sum_{Q\in \mathscr{D}}\phi(Q)^{p} \sum_{S \subseteq Q} \frac{\mu(S)}{\mu(Q)}\lambda_1(S)^p 
=\sum_{S\in \mathscr{D}} \lambda_1(S)^p\sum_{Q \supseteq S} \frac{\mu(S)}{\mu(Q)}\cdot \phi(Q)^p\\
&=\sum_{S\in \mathscr{D}} \lambda_1(S)^p\sum_{k=0}^{\infty} \frac{\mu(S)}{\mu(S^{[k]})}\cdot \phi(S^{[k]})^p,
\end{aligned}
\end{equation}
where $S^{[k]}$ is the $k$-th strict dyadic ancestor.
Similarly as in the proof of Proposition \ref{P3.10}, we consider the two cases.

{\bf{Case 1:}} $\phi(Q)=\mu(B_Q)^{\ep}$ with $\ep \in (0,\frac{1}{p})$. 
Taking $Q=S$ and $\gamma=1-p\ep>0 $ in (\ref{eq3.15}), we get
\begin{equation}\label{case1'}
	\sum_{k=0}^{\infty}\frac{\mu(S)}{\mu(S^{[k]})}\cdot \phi(S^{[k]})^p
	=\sum_{k=0}^{\infty} \frac{\mu(S)}{\mu(S^{[k]})^{1-p\ep}}
	\lesssim \frac{\mu(S)}{\mu(S)^{1-p\ep}}
	=\mu(S)^{p\ep}\sim\phi(S)^p.
\end{equation}

{\bf{Case 2:}} $\phi(Q)=l(Q)^{\alpha}$ with $\alpha \in (0,\frac{d}{p})$. Taking $Q=S$, $s=p\alpha$ and $t=1$ in \eqref{eq4.10}, noting that $p\alpha<p\frac{d}{p}=d\cdot 1$ as required to apply \eqref{eq4.10}, it follows that
\begin{equation}\label{case2'}
	\sum_{k=0}^{\infty}\frac{\mu(S)}{\mu(S^{[k]})}\cdot \phi(S^{[k]})^p=	
\sum_{k=0}^{\infty} \frac{\mu(S)}{\mu(S^{[k]})}\cdot l(S^{[k]})^{p\alpha}
\lesssim \mu(S) \frac{l(S)^{p\alpha}}{\mu(S)}
=l(S)^{p\alpha}=\phi(S)^p.
\end{equation}
Thus, \eqref{case1'} and \eqref{case2'} yield that 
$$
\sum_{k=0}^{\infty} \frac{\mu(S)}{\mu(S^{[k]})}\cdot \phi(S^{[k]})^p
\lesssim \phi(S)^p.
$$
Substituting this into \eqref{eq3.14}, we get 
$$\|\{\lambda_{p}^{\phi}(Q)\}  _{Q \in \mathscr{D}}\|_{\ell^p}^p 
\lesssim \sum_{S\in \mathscr{D}} \lambda_1(S)^p \phi(S)^p=
\|\{\lambda_{1}^{\phi}(Q)\}  _{Q \in \mathscr{D}}\|_{\ell^p}^p 	 \sim 
	 		 \|b\|_{\mathrm{Osc}_{\phi}^p(\mu)},$$
where we used \eqref{eq3.13} in the last equality ``$\sim$''. 	 		 

This proves ``$\lesssim$'' in \eqref{eqP3.11}, while ``$\gtrsim$'' is clear.
\end{proof}

\section{The complex median method of Wei--Zhang revisited}\label{sec:median}

Two methods for proving lower bounds for commutators in the recent literature are the ``approximate weak factorisation'' from \cite{H}, and versions of the so-called median method. Until recently, this second approach was restricted to real-valued functions, for which the median is conventionally defined. However, Wei and Zhang \cite{WZ} recently demonstrated that, with suitable modifications, the median method can also be extended to work with complex-valued functions. A key to that extension is the existence of a ``two-dimensional median'' in a suitable sense. This existence is recently due to Baringhaus and Gr\"ubel \cite{BG}, and a slightly weaker variant (but sufficient for commutators) was independently rediscovered by \cite[Theorem 1.5]{WZ}. (See Remark \ref{rem:median} below for details.) In Proposition \ref{prop:median} below, we present yet another variant of the existence of a two-dimensional median, with a simpler proof and roughly the same scope of applicability as far as commutator applications are concerned.


For $u\in\C$ with $|u|=1$ and $\theta\in[0,2\pi]$, we denote by
\begin{equation*}
\begin{split}
  \Gamma(u,\theta) &:=\{z\in\C:\Re(\bar uz)\geq\cos(\tfrac12\theta)|z|\}, \\
  \Gamma^\circ(u,\theta)&:=\{z\in\C:\Re(\bar uz)>\cos(\tfrac12\theta)|z|\}
\end{split}
\end{equation*}
the closed and open cones in direction $u$ of total angle $\theta$ (thus $\frac12\theta$ on either side of~$u$).

\begin{proposition}\label{prop:median}
Let $(N,\eps)=(3,\frac14)$.
Given a Borel probability $\nu$ on $\C$, there exist $m\in\C$ and $N$ closed cones $\{\Gamma_j\}_{j=1}^N$ of angle $\frac{2\pi}{N}$ each, together covering all $\C$, such that
\begin{equation*}
  \nu(m+\Gamma_j)\geq\eps
\end{equation*}
for every $j\in\{1,\ldots,N\}$.
\end{proposition}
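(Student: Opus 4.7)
The plan is to combine a compactness argument with the planar centerpoint theorem. First I will reduce to the case of a compactly supported Borel probability $\nu$, noting that the general result follows by weak-$*$ approximation. Writing $\Gamma_j(\theta):=\Gamma(e^{i(\theta+2\pi(j-1)/3)},2\pi/3)$ and $F(m,\theta):=\min_{j\in\{1,2,3\}}\nu(m+\Gamma_j(\theta))$, I observe that $F$ is upper-semicontinuous on $\C\times[0,2\pi/3)$ (because each $\Gamma_j$ is closed) and tends to $0$ as $|m|\to\infty$ (since at least one cone eventually leaves the support of $\nu$); hence $F$ attains its supremum at some $(m_*,\theta_*)$, and the problem reduces to showing $F(m_*,\theta_*)\geq 1/4$.

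Next I will apply Rado's planar centerpoint theorem to arrange that $m_*$ has Tukey depth at least $1/3$: every closed half-plane containing $m_*$ has $\nu$-mass at least $1/3$. Since each cone of angle $2\pi/3$ is contained in a closed half-plane through $m_*$, this yields the pointwise upper bound $g(\theta):=\nu(m_*+\Gamma(e^{i\theta},2\pi/3))\leq 2/3$. Moreover, because the three closed cones cover $\C$ and overlap only on measure-zero rays for a.e.\ orientation, the identity $g(\theta)+g(\theta+2\pi/3)+g(\theta+4\pi/3)=1$ holds for a.e.\ $\theta$; in particular $g$ is $2\pi$-periodic with mean $1/3$.

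Finally I will argue by contradiction. Suppose every orientation fails, so that for each $\theta$, at least one of the values $g(\theta+2\pi(j-1)/3)$ is strictly less than $1/4$. Then the sublevel set $S:=\{\theta:g(\theta)<1/4\}$ and its cyclic translates $S+2\pi/3,\,S+4\pi/3$ cover $\R/2\pi\Z$, forcing $|S|\geq 2\pi/3$. Combining this covering with the mean identity $\int g=2\pi/3$ and the bound $g\leq 2/3$ should yield a contradiction. The main obstacle is that the naive integral estimate $2\pi/3=\int g\leq(1/4)|S|+(2/3)(2\pi-|S|)$ only gives $|S|\leq 8\pi/5$ and is therefore consistent with $|S|\geq 2\pi/3$; closing the gap will require either (i) exploiting the first-order optimality of $(m_*,\theta_*)$ in $F$ to force structural constraints on $\nu$ along the three boundary rays emanating from $m_*$, or (ii) using the freedom to re-select $m_*$ within the convex set of centerpoints of $\nu$ to sharpen the pointwise control on $g$ beyond the bound $g\leq 2/3$.
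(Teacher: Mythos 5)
Your argument has a genuine, and indeed self-acknowledged, gap at the decisive step: after setting up the maximizer $(m_*,\theta_*)$, the Tukey-depth bound $g\leq \tfrac23$, the mean identity $\int_0^{2\pi} g = \tfrac{2\pi}{3}$, and the covering bound $|S|\geq \tfrac{2\pi}{3}$, you correctly compute that these facts are mutually consistent and yield no contradiction. The two proposed repairs are not carried out, and neither is obviously viable: (i) first-order optimality of $F$ at $(m_*,\theta_*)$ is problematic because $F$ is only upper semicontinuous (the measure $\nu$ may charge the boundary rays, so $F$ need not be differentiable or even continuous at the maximizer), and (ii) there is no identified mechanism by which re-selecting the centerpoint improves the bound $g\leq\tfrac23$. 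There is also a conflation earlier in the argument: the maximizer $m_*$ of $F$ and a centerpoint of $\nu$ are a priori different points, and you cannot simultaneously ``arrange'' both properties; if you fix $m_*$ to be a centerpoint and optimize only over $\theta$, you lose the optimality in $m$ that route (i) would need. Finally, the weak-$*$ reduction to compact support is not automatic: since the sets $m_n+\Gamma_j$ move with $n$, the portmanteau inequality for closed sets does not directly give $\nu(m+\Gamma_j)\geq\limsup_n\nu_n(m_n+\Gamma_j)$ (a translate of a closed cone is not contained in the cone with translated apex), so this step would also need an extra argument, e.g.\ via slightly enlarged cones.

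For comparison, the paper's proof is much more elementary and avoids compactness, centerpoints, and averaging altogether. It fixes the orientation in advance, takes $t$ to be a one-dimensional median of the distribution of $\Im z$, and then defines $\sigma_0$ as the supremum of those $\sigma$ for which the single right-pointing cone $\sigma+it+\Gamma(1,\tfrac{2\pi}{3})$ has mass at least $\tfrac14$. Continuity of measure gives $\nu(m+\Gamma_0)\geq\tfrac14$ and $\nu(m+\Gamma_0^\circ)\leq\tfrac14$ for $m=\sigma_0+it$, and the remaining two cones are handled by the inclusion $\C\setminus\Gamma_0^\circ\subset\Gamma_\pm\cup\{\Im z<0\}$ together with the median property of $t$. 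This also yields the stronger conclusion, noted in the paper, that the cones can be taken in any prescribed orientation independent of $\nu$ --- something your approach, even if completed, would not give.
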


\begin{remark}\label{rem:median}
Recall that the median of a probability $\nu$ on $\R$ is a value $m\in\R$ such that both $\nu((-\infty,m])\geq\frac12$ and $\nu([m,\infty))\geq\frac12$, i.e., the mass of $\nu$ is essentially equally divided on two sides of $m$, to the extent allowed by a possible point mass at $m$.

The value $m\in\C$ in Proposition \ref{prop:median} is a ``quasi-median'' of $\nu$ in the sense that the measure of $\nu$ is divided among each of the $N$ conical regions around $m$, with a fair (if not quite equal) share to each. We do not know whether Proposition \ref{prop:median} holds with $(N,\eps)=(3,\frac13)$. This would be interesting in itself, but it makes no difference to our applications to commutators further below.

Proposition \ref{prop:median} also holds in each of the cases $(N,\eps)\in\{(1,1),(2,\tfrac12),(4,\tfrac{1}{4})\}$, where
case $(N,\eps)=(1,1)$ is the triviality $\nu(\C)\geq 1$, while case $N=2$ and $\eps=\frac12$ is a simple extension of the existence of the median for the probability $\nu_1(A):=\nu(A\times\R)$ on $\R$: if $m$ is a median of $\nu_1$, the required property is satisfied with the cones $\Gamma(\pm 1,\pi)$.

The nontrivial case $(N,\eps)=(4,\frac{1}{4})$ is a recent result of Baringhaus and Gr\"ubel \cite[Theorem 1]{BG}, while Wei and Zhang \cite[Theorem 1.5]{WZ} independently obtain the weaker variant with $(N,\eps)=(4,\frac{1}{16})$. The proof of \cite[Theorem 1.5]{WZ} is ``elementary'' but rather tedious, taking about 10 pages in \cite[Section 6]{WZ}. The proof of \cite[Theorem 1]{BG} in \cite[Section 4.1]{BG} takes less than a page, but it uses as input the special case of absolutely continuous measures, which is quoted from elsewhere.

We will show that case $(N,\eps)=(3,\frac14)$ holds with a much simpler proof, while the result has roughly the same scope of applicability as \cite[Theorem 1.5]{WZ} for commutator estimates, as we will see further below.

The proof below shows that the cones  can be chosen to be in the standard orientation $\Gamma_j=\Gamma(e^{i j\frac{2\pi}{3}},\frac{2\pi}{3})$ (or, by a rotation, in any other prescribed orientation that we like). This is in contrast to \cite[Theorem 1]{BG} and \cite[Theorem 1.5]{WZ}, where the orientation of the cones will also depend on $\nu$.
\end{remark}

\begin{proof}[Proof of Proposition \ref{prop:median}]
We will identify $\C\simeq\R^2$ without explicit mention whenever convenient.
Let $t$ be a median of $\nu_2:E\subset\R\mapsto\nu(\R\times E)$. Hence both
\begin{equation*}
  \nu(z:\Im z\geq t)\geq\frac12,\qquad  \nu(z:\Im z\leq t)\geq\frac12.
\end{equation*}
Let
\begin{equation*}
  \sigma_0:=\sup\{\sigma:\nu(\sigma+it+\Gamma(1,\tfrac{2\pi}{3}))\geq\tfrac14\}.
\end{equation*}
We claim that $m:=\sigma_0+it$ satisfies the required property with the cones $\Gamma_0:=\Gamma(1,\frac{2\pi}{3})$ and $\Gamma_\pm:=\Gamma(u_\pm,\frac{2\pi}{3})$, where $u_\pm:=e^{\pm i\frac{2\pi}{3}}$. Since
\begin{equation*}
  m+\Gamma_0=\bigcap_{\sigma<\sigma_0}(\sigma+it+\Gamma_0),\qquad
  m+\Gamma_0^\circ=\bigcup_{\sigma>\sigma_0}(\sigma+it+\Gamma_0),
\end{equation*}
it follows from the continuity of measure and the definition of supremum that
\begin{equation*}
  \nu(m+\Gamma_0)\geq\tfrac14,\qquad
  \nu(m+\Gamma^\circ_0)\leq\tfrac14.
\end{equation*}

Noting that
\begin{equation*}
  \C\setminus\Gamma_0^\circ
  \subset \Gamma_+\cup\{z:\Im z<0\},
\end{equation*}
it follows that
\begin{equation*}
  \tfrac34
  \leq  \nu( m+\C\setminus\Gamma^\circ_0)  
  \leq \nu(m+\Gamma_+)+\nu(\{z:\Im z<t\})
  \leq \nu(m+\Gamma_+)+\tfrac12
\end{equation*}
recalling that $t$ is a median of $\nu_2$. Hence
$\nu(m+\Gamma_+)\geq\tfrac 34-\tfrac12=\tfrac14$,
and the proof of the similar estimate with $\Gamma_-$ in place of $\Gamma_+$ is entirely analogous.
This completes the proof.
\end{proof}

The following result implements the ``median method'' in the present setting: With the help of the ``median'' from Proposition \ref{prop:median}, we dominate the oscillations of $b$ by the action of a commutator $[b,T]$ on suitable test functions.

\begin{proposition}\label{prop:mm}
Let $K:X\times X\to\C$ be a kernel, and let $B$ and $\tilde B$ be balls such that
\begin{equation}\label{eq:K(x,y)>}
   \abs{K(x,y)}\gtrsim \frac{\phi(B)}{\mu(B)},\qquad
   \abs{\arg(\bar v K(x,y))}\leq\eta\leq\frac{\pi}{9}
\end{equation}
for some $v\in\C$ with $\abs{v}=1$ and for all $x\in B$ and $y\in\tilde B$. If $b\in L^1_{\operatorname{loc}}(X)$, then there are measurable subsets $E\subset B$ and $F\subset \tilde B$ such that
\begin{equation*}
  \inf_c\phi(B)\fint_B\abs{b(x)-c}d\mu(x)
  \lesssim\frac{1}{\mu(\tilde B)}\abs{\langle 1_E, [b,T]1_F\rangle},
\end{equation*}
where $T$ is the integral operator with kernel $K$.
\end{proposition}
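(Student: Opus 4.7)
The plan is to implement the median method via Proposition~\ref{prop:median}. First, I will take $\nu := b_{*}(\mu|_{\tilde B}/\mu(\tilde B))$, the push-forward of the normalised mass on $\tilde B$ under $b$, and apply Proposition~\ref{prop:median} in the standard orientation. This yields $m\in\C$ and cones $\Gamma_j := \Gamma(u_j,\tfrac{2\pi}{3})$, $u_j := e^{ij\cdot 2\pi/3}$, $j=0,1,2$, such that the sets
\begin{equation*}
  F_j := \{y\in\tilde B : b(y)-m\in\Gamma_j\}
\end{equation*}
all satisfy $\mu(F_j)\geq \mu(\tilde B)/4$. On $B$ I will use the ``antipodal'' cones by setting
\begin{equation*}
  E_j := \{x\in B : b(x)-m\in\Gamma(-u_j,\tfrac{2\pi}{3})\}.
\end{equation*}
Since $-u_0,-u_1,-u_2$ are again three directions equally spaced by $2\pi/3$, the cones $\Gamma(-u_j,\tfrac{2\pi}{3})$ still cover $\C$, so $B=E_0\cup E_1\cup E_2$.

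The key geometric point is that for $x\in E_j$ and $y\in F_j$, both $b(x)-m$ and $m-b(y)=-(b(y)-m)$ lie in the convex cone $\Gamma(-u_j,\tfrac{2\pi}{3})$ (of angular width $\tfrac{2\pi}{3}<\pi$), and hence
\begin{equation*}
  b(x)-b(y)\in\Gamma(-u_j,\tfrac{2\pi}{3}),\qquad |b(x)-b(y)|\geq \tfrac12\bigl(|b(x)-m|+|b(y)-m|\bigr).
\end{equation*}
Combined with the argument bound $|\arg(\bar v K(x,y))|\leq\eta\leq \pi/9$, the product $K(x,y)(b(x)-b(y))$ then lies within angular distance $\eta+\tfrac{\pi}{3}\leq \tfrac{4\pi}{9}<\tfrac{\pi}{2}$ of the unit vector $w_j := -vu_j$, which lets me extract a positive real part: for such $x,y$,
\begin{equation*}
  \Re\bigl(\overline{w_j}\, K(x,y)(b(x)-b(y))\bigr)\geq \cos(\tfrac{4\pi}{9})\,|K(x,y)|\,|b(x)-b(y)|.
\end{equation*}

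Integrating this inequality over $E_j\times F_j$ and using the lower bound $|K(x,y)|\gtrsim \phi(B)/\mu(B)$ together with the magnitude estimate above will yield
\begin{equation*}
  |\langle 1_{E_j},[b,T]1_{F_j}\rangle|\;\geq\;\Re\bigl(\overline{w_j}\,\langle 1_{E_j},[b,T]1_{F_j}\rangle\bigr)\;\gtrsim\;\frac{\phi(B)}{\mu(B)}\,\mu(F_j)\int_{E_j}|b-m|\,d\mu.
\end{equation*}
Since $\sum_j 1_{E_j}\geq 1_B$, a pigeonhole argument produces a $j$ for which $\int_{E_j}|b-m|\,d\mu\geq \tfrac{1}{3}\int_B|b-m|\,d\mu$. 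Setting $E:=E_j$, $F:=F_j$ for this $j$ and using $\mu(F_j)\geq \mu(\tilde B)/4$ then gives
\begin{equation*}
  \frac{1}{\mu(\tilde B)}|\langle 1_E,[b,T]1_F\rangle|\;\gtrsim\;\phi(B)\fint_B|b-m|\,d\mu\;\geq\;\inf_c\phi(B)\fint_B|b-c|\,d\mu,
\end{equation*}
which is the desired bound.

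The point requiring the most care is the angle arithmetic. The hypothesis $\eta\leq \pi/9$ in \eqref{eq:K(x,y)>} is precisely calibrated so that $\tfrac{\pi}{3}+\eta<\tfrac{\pi}{2}$, ensuring that the real part extracted above stays strictly positive and dominates the full absolute value. This tight numerical matching is exactly what makes the three-cone quasi-median of Proposition~\ref{prop:median}, rather than the four-cone version of Wei--Zhang, already sufficient for a commutator lower bound; had $\eta$ been forced to be larger (or had one tried to replace the three $2\pi/3$-cones by two half-planes), the argument would break down and the four-cone version would be needed.
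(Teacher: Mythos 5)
Your proposal is correct and follows essentially the same route as the paper's proof: apply the three-cone quasi-median to the push-forward of the normalised measure on $\tilde B$, pair each cone with its antipodal cone on $B$, use the angle arithmetic $\tfrac{\pi}{3}+\eta\leq\tfrac{4\pi}{9}<\tfrac{\pi}{2}$ to extract a positive real part of $\overline{u_jv}(b(x)-b(y))K(x,y)$, and conclude by selecting the best index $j$. The only cosmetic differences are that you integrate the double integral directly and pigeonhole over $j$, whereas the paper first averages over $y$, sums over $j$, and then takes the largest term.
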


\begin{proof}
We apply Proposition \ref{prop:median} to $\nu(A):=\mu(\tilde B)^{-1}\mu(\tilde B\cap\{b\in A\})$. For convenience, let us denote the resulting cones by $-\Gamma_i$ instead of $\Gamma_i$. Thus, we find some $m\in\C$ and cones $\Gamma_i$ such that $\mu(\tilde B\cap\{b\in m-\Gamma_i\})\geq\eps\mu(\tilde B)$ for all $i=1,\ldots,N$. (Proposition \ref{prop:median} gives $N=3$, but we write the proof with a generic $N$, showing that one could equally well use $N=4$ from \cite[Theorem 1.5]{WZ}.) We will prove the claimed estimate with $m$ in place of $c$.

Since $\bigcup_{j=1}^N\Gamma_j=\C$, it follows that
\begin{equation}\label{eq:mStart}
  \phi(B)\fint_B\abs{b(x)-m}d\mu(x)
  \leq\sum_{j=1}^N\int_{B\cap\{b\in m+\Gamma_j\}}\frac{\phi(B)}{\mu(B)}\abs{b(x)-m}d\mu(x).
\end{equation}
Now consider a fixed $\Gamma_j=\Gamma(u_j,\frac{2\pi}{N})$. We claim that, for every $x\in B\cap\{b\in m+\Gamma_j\}$ and $y\in\tilde B\cap\{b\in m-\Gamma_j\}$, we have
\begin{equation}\label{eq:mClaim}
  \frac{\phi(B)}{\mu(B)}\abs{b(x)-m}
  \lesssim\Re[\overline{u_jv}(b(x)-b(y))K(x,y)],
\end{equation}
where $v\in\C$ comes from the assumption \eqref{eq:K(x,y)>}

To justify \eqref{eq:mClaim}, let $b(x)=m+u_jt_1  e^{i\phi_1}$ and $b(y)=m-u_jt_2 e^{i\phi_2}$, where $t_1=\abs{b(x)-m}$, $t_2=\abs{b(y)-m}$, and $\abs{\phi_j}\leq\theta=\frac{\pi}{N}$. By \eqref{eq:K(x,y)>}, we can also write $K(x,y)=v t_0 e^{i\phi_0}$, where $t_0=\abs{K(x,y)}\gtrsim \phi(B)\mu(B)^{-1}$. Hence
\begin{equation*}
\begin{split}
  \Re[\overline{u_jv}(b(x)-b(y))K(x,y)]
  &=\Re[\bar u_j((m+u_jt_1e^{i\phi_1})-(m-u_jt_2 e^{i\phi_2}))\bar v v t_0 e^{i\phi_0}] \\
  &=\Re(t_1 t_0 e^{i(\phi_1+\phi_0)}+t_2 t_0 e^{i(\phi_2+\phi_0)}) \\
  &=t_1 t_0 \cos(\phi_1+\phi_0)+t_2 t_0 \cos(\phi_2+\phi_0),
\end{split}
\end{equation*}
where
\begin{equation*}
  \abs{\phi_j+\phi_0}\leq\theta+\eta=\frac{\pi}{N}+\frac{\pi}{9}
  \leq\frac{\pi}{3}+\frac{\pi}{9}=\frac{4}{9}\pi<\frac{\pi}{2}.
\end{equation*}
Since $\cos \phi=\cos\abs{\phi}$ is a decreasing function of $\abs{\phi}\in[0,\frac{\pi}{2}]$, it follows that
\begin{equation*}
\begin{split}
  \Re[\overline{u_jv}(b(x)-b(y))K(x,y)]
  &\geq (t_1+t_2) t_0 \cos(\tfrac49\pi) \\
  &\gtrsim t_1 t_0
  \gtrsim\abs{b(x)-m}\frac{\phi(B)}{\mu(B)}.
\end{split}
\end{equation*}
This proves \eqref{eq:mClaim}.

We can now take the average of \eqref{eq:mClaim} over $y\in\tilde B\cap\{b\in m-\Gamma_j\}$. This gives
\begin{equation*}
\begin{split}
   \frac{\phi(B)}{\mu(B)}\abs{b(x)-m}
   &\lesssim\fint_{\tilde B\cap\{b\in m-\Gamma_j\}}
     \Re[\overline{u_jv}(b(x)-b(y))K(x,y)]d\mu(y) \\
    &\lesssim \Re\Big[\overline{u_jv}\fint_{\tilde B}
     (b(x)-b(y))K(x,y)
     1_{\tilde B\cap\{b\in m-\Gamma_j\}}(y)     d\mu(y)\Big],
\end{split}
\end{equation*}
where in the last step we used $\mu(\tilde B\cap\{b\in m-\Gamma_j\})\gtrsim\mu(\tilde B)$. Substituting back to \eqref{eq:mStart}, we obtain
\begin{equation*}
\begin{split}
  &\phi(B)\fint_B\abs{b(x)-m}d\mu(x) \\
  &\lesssim\sum_{j=1}^N \Re\Big[\overline{u_jv}\int_B 1_{B\cap\{b\in m+\Gamma_j\}}(x)
    \fint_{\tilde B} (b(x)-b(y))K(x,y)1_{\tilde B\cap\{b\in m-\Gamma_j\}}(y)d\mu(y) d\mu(x)\Big] \\
  &=\mu(\tilde B)^{-1}\sum_{j=1}^N \Re\Big[\overline{u_jv}\left\langle 1_{B\cap\{b\in m+\Gamma_j\}},
  [b,T]1_{\tilde B\cap\{b\in m-\Gamma_j\}} \right\rangle \Big].
\end{split}
\end{equation*}
The proof is completed by estimating the real part by the absolute value, and taking for $(E,F)$ the $(B\cap\{b\in m+\Gamma_j\},\tilde B\cap\{b\in m-\Gamma_j\})$ for which the largest value is achieved.
\end{proof}

\begin{corollary}\label{cor:mm}
Let $T$ be a strongly non-degenerate $\phi$-fractional integral. Then for all balls $B\subset X$, there exist subsets $E,F\subset B^*$, where $B^*= C\cdot B$ with a fixed dilation factor $C$, such that
\begin{equation*}
  \phi(B)\int_B\abs{b-\ave{b}_B}d\mu
  \lesssim \abs{\langle 1_E, [b,T]1_F\rangle}.
\end{equation*}
\end{corollary}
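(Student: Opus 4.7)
The plan is to combine the strong non-degeneracy hypothesis on $K$ with Proposition~\ref{prop:mm} applied either to $T$ or to its adjoint $T^*$, depending on which of the two options in Definition~\ref{D1'} is active. I will keep $B=B(x_0,r)$ fixed throughout.

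First, Definition~\ref{D1'} applied to $x_0$ and $r$ supplies a point $y_0\in B(x_0,\overline{C}Ar)\setminus B(x_0,Ar)$, a unit complex number $v$, and (at least) one of the two cases \eqref{eq:sndg1}, \eqref{eq:sndg2}. Set $\tilde B:=B(y_0,r)$. Choosing $C:=\overline{C}A+1$, both $B$ and $\tilde B$ are contained in $B^*=C\cdot B$. The doubling condition yields $\mu(\tilde B)\sim\mu(B)\sim\mu(B^*)$, and the defining formulas \eqref{eq:phixr} give $\phi(\tilde B)\sim\phi(B)\sim\phi(B^*)$. Moreover the lower bound in \eqref{eq:sndg1} (resp.\ \eqref{eq:sndg2}) reads exactly
\begin{equation*}
 |K(x,y)|\gtrsim\frac{\phi(B)}{\mu(B)}\qquad\text{resp.}\qquad |K(y,x)|\gtrsim\frac{\phi(B)}{\mu(B)}
\end{equation*}
for $(x,y)\in B\times\tilde B$, matching the hypothesis \eqref{eq:K(x,y)>} of Proposition~\ref{prop:mm}.

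In case \eqref{eq:sndg1} I would apply Proposition~\ref{prop:mm} directly to $T$, $b$, and the pair $(B,\tilde B)$. This produces sets $E\subset B\subset B^*$ and $F\subset\tilde B\subset B^*$ with
\begin{equation*}
 \inf_c \phi(B)\fint_B|b-c|\,d\mu\lesssim \frac{1}{\mu(\tilde B)}|\langle 1_E,[b,T]1_F\rangle|.
\end{equation*}
Multiplying by $\mu(B)$ and invoking the standard fact $\fint_B|b-\langle b\rangle_B|\,d\mu\leq 2\inf_c \fint_B|b-c|\,d\mu$ together with $\mu(B)\sim\mu(\tilde B)$ yields the claimed estimate.

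The main (mild) obstacle is case \eqref{eq:sndg2}, where it is $K(y,x)$ rather than $K(x,y)$ that is bounded below. Here I would pass to the adjoint: the kernel $K^*(x,y)=\overline{K(y,x)}$ of $T^*$ satisfies the hypotheses of Proposition~\ref{prop:mm} (with the same $v$ and the pair $(B,\tilde B)$) in place of $K$, and one may apply the proposition to $T^*$ with $\bar b$ in place of $b$. This produces $\tilde E\subset B$ and $\tilde F\subset\tilde B$ such that
\begin{equation*}
 \inf_c\phi(B)\fint_B|b-c|\,d\mu=\inf_c\phi(B)\fint_B|\bar b-c|\,d\mu \lesssim \frac{1}{\mu(\tilde B)}|\langle 1_{\tilde E},[\bar b,T^*]1_{\tilde F}\rangle|.
\end{equation*}
A direct calculation, using $K^*(x,y)=\overline{K(y,x)}$ and Fubini, gives the identity
\begin{equation*}
 \langle 1_{\tilde E},[\bar b,T^*]1_{\tilde F}\rangle=-\overline{\langle 1_{\tilde F},[b,T]1_{\tilde E}\rangle},
\end{equation*}
so setting $E:=\tilde F\subset B^*$ and $F:=\tilde E\subset B^*$ converts the right-hand side into $\mu(\tilde B)^{-1}|\langle 1_E,[b,T]1_F\rangle|$. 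Combining as in case \eqref{eq:sndg1} finishes the proof.
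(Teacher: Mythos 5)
Your proposal is correct and follows essentially the same route as the paper: apply Proposition~\ref{prop:mm} to the pair $(B,\tilde B=B(y_0,r))$ in case \eqref{eq:sndg1}, and pass to the transposed operator in case \eqref{eq:sndg2}, where the paper uses the plain transpose $K^*(x,y)=K(y,x)$ with $b$ itself rather than your conjugate version with $\bar b$ — both give the same conclusion after taking absolute values. The only slip is the dilation factor: in a quasi-metric space the inclusion $\tilde B\subset C\cdot B$ requires $C=A_0(1+\overline{C}A)$ rather than $\overline{C}A+1$, a trivial adjustment.
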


\begin{proof}
Consider a fixed ball $B=B(x_0,r)$. By assumption (recall Definition \ref{D1'}), for some $y_0\in B(x_0,\overline{C}Ar)\setminus B(x_0,Ar)$, the kernel $K$ of $T$ satisfies either \eqref{eq:sndg1} or \eqref{eq:sndg2}. Let us first assume \eqref{eq:sndg1} and denote $\tilde B:=B(y_0,r)$. If $y\in\tilde B$, then
\begin{equation*}
  \rho(y,x_0)\leq A_0\rho(y,y_0)+A_0\rho(y_0,x_0)<A_0(1+\overline{C}A)r,
\end{equation*}
thus $\tilde B\subset B^*:=B(x_0,A_0(1+\overline{C}A)r)=C\cdot B$ with $C=A_0(1+\overline{C}A)$, and symmetrically $B\subset C\cdot \tilde B$.

Note that \eqref{eq:sndg1} coincides with assumption \eqref{eq:K(x,y)>} of Proposition \ref{prop:mm}; hence the conclusion of Proposition \ref{prop:mm} gives us $E\subset B\subset B^*$ and $F\subset \tilde B\subset B^*$ such that
\begin{equation*}
  \phi(B)\int_B\abs{b-\ave{b}_B}d\mu
  \lesssim\inf_c\phi(B)\int_B\abs{b-c}d\mu
  \lesssim\frac{\mu(B)}{\mu(\tilde B)}\abs{\langle 1_E, [b,T]1_F\rangle}.
\end{equation*}
Since $B\subset C\cdot\tilde B$, it follows from doubling that
\begin{equation*}
  \mu(B)\leq\mu(C\cdot\tilde B)\lesssim\mu(\tilde B),
\end{equation*}
and we get the claimed estimate in this first case.

Suppose then that  \eqref{eq:sndg2} holds instead. Let $K^*(x,y)=K(y,x)$, and let $T^*g(u):=\int_X K^*(u,v)f(v)d\mu(v)$. Then $T^*$ is also a $\phi$-fractional integral operator, whose kernel satisfies the assumptions of the previous case in the ball $B$. Thus, by the previous case, we find sets $E,F\subset B^*$ such that
\begin{equation*}
  \phi(B)\int_B\abs{b-\ave{b}_B}\lesssim\abs{\langle 1_E,[b,T^*]1_F\rangle}.
\end{equation*}
But
\begin{equation*}
  \langle 1_E,[b,T^*]1_F\rangle
  =\langle 1_E,bT^*1_F-T^*(b1_F)\rangle
  =\langle T(b1_E),1_F\rangle-\langle bT1_E,1_F\rangle
  =-\langle 1_F,[b,T]1_E\rangle,
\end{equation*}
and hence we get the claimed estimate, only with the names of $E$ and $F$ interchanged. This completes the proof.
\end{proof}

\section{The lower bounds for the fractional commutators}\label{sec:lower}


The main results of this section are lower bounds for the Schatten norm of fractional commutators $[b,T]$, by means of fractional oscillation norms that generalize those introduced in Section \ref{sec:Besov}. Although our main concern in this paper is Schatten $S^p$ norms, we state and prove the following estimate for the more general Schatten--Lorentz $S^{p,q}$ norms: on the one hand, this added generality comes essentially for free by the same argument; on the other hand, it may be useful elsewhere. We denote by $\ell^{p,q}$ the usual Lorentz sequence space (see e.g.\ \cite{BL}) and by $S^{p,q}=S^{p,q}(L^2(\mu))$ the space of compact operators on $L^2(\mu)$ with
\begin{equation*}
  \Norm{R}_{S^{p,q}}:=
  \Norm{ \{a_n(R)\}_{n=0}^\infty }_{\ell^{p,q}},
\end{equation*}
where $a_n(R)$ are the approximation numbers (or singular values) as in \eqref{eq:Sp}.

\begin{proposition}\label{P3}
Let $(X,\rho,\mu)$ be a space of homogeneous type with a system of dyadic cubes $\mathscr{D}$ in the sense of Definition \ref{De2.4} and, for each $Q\in \mathscr{D}$, let $B_Q$ be a ball centred at $Q$ and of radius $c\cdot l(Q)$ for a constant $c$ only depending on the space $X$. 
Let $q\in [1,\infty]$, $p \in (1,\infty)$, and $\phi$ be as in \eqref{eq:phixy} and \eqref{phi} with $\ep \in (0,1)$ and $\alpha\in (0,\infty)$.
Let $T$ be a strongly non-degenerate $\phi$-fractional integral operator. Then for all $b \in L_{\mathrm{loc}}^1(\mu)$, we have
\begin{equation}\label{eq3.3}
	\|b\|_{\mathrm{Osc}_{\phi}^{p,q}(\mu)}
	: =\Big\| \Big\{ \phi(Q) \fint_{B_Q}|b-\langle b\rangle_{B_Q} |d\mu\Big\}_{Q\in \mathscr{D}} \Big\|_{\ell^{p,q}}.
	\lesssim \|[b,T]\|_{S^{p,q}},
\end{equation}
In particular, when $p=q\in(1,\infty)$, we have
\begin{equation}
\|b\|_{\mathrm{Osc}_{\phi}^{p}(\mu)}=\|b\|_{\mathrm{Osc}_{\phi}^{p,p}(\mu)} \lesssim \|[b,T]\|_{S^{p,p}}= \|[b,T]\|_{S^{p}}.
\end{equation}
\end{proposition}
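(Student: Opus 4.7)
The strategy is to combine the complex median estimate of Corollary \ref{cor:mm} with an abstract Schatten--Lorentz bound for bilinear pairings indexed by dyadic cubes.

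First, I would apply Corollary \ref{cor:mm} to each ball $B_Q$ to obtain measurable subsets $E_Q, F_Q \subset B_Q^*$ such that
$$\phi(B_Q) \int_{B_Q} \abs{b - \langle b\rangle_{B_Q}}\, d\mu \lesssim \abs{\langle 1_{E_Q}, [b,T] 1_{F_Q}\rangle}.$$
After the normalization $e_Q := \mu(B_Q)^{-1/2} 1_{E_Q}$ and $f_Q := \mu(B_Q)^{-1/2} 1_{F_Q}$, which gives $\Norm{e_Q}_{L^2(\mu)}, \Norm{f_Q}_{L^2(\mu)} \lesssim 1$ by doubling (since $E_Q, F_Q \subset B_Q^*$), dividing the previous display by $\mu(B_Q)$ yields
$$\phi(Q) \fint_{B_Q} \abs{b - \langle b\rangle_{B_Q}}\, d\mu \lesssim \abs{\langle e_Q, [b,T] f_Q\rangle}.$$
Taking $\ell^{p,q}$-quasinorms over $Q \in \mathscr{D}$ on both sides, the proposition reduces to proving, for any compact operator $R$ on $L^2(\mu)$ (applied to $R = [b,T]$, which may be assumed compact lest the right-hand side be infinite), the abstract estimate
$$\Bigl\| \bigl\{ \langle e_Q, R f_Q\rangle \bigr\}_{Q \in \mathscr{D}} \Bigr\|_{\ell^{p,q}} \lesssim \Norm{R}_{S^{p,q}}.$$

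To prove the abstract estimate, my plan is a pointwise domination of the non-increasing rearrangement by the sequence of approximation numbers. Writing $\alpha_Q := \abs{\langle e_Q, R f_Q\rangle}$ and denoting by $\alpha_n^*$ the decreasing rearrangement, I would aim at
$$\alpha_{Cn}^* \lesssim a_n(R) \qquad (n \geq 0)$$
for some constant $C$ depending only on the doubling data. Since $\Norm{R}_{S^{p,q}} = \Norm{\{a_n(R)\}}_{\ell^{p,q}}$ and any such pointwise rearrangement bound transfers to arbitrary Lorentz quasinorms, this would conclude the proof. For each fixed $n$, one chooses a rank-$n$ operator $F_n$ with $\Norm{R - F_n} \leq 2 a_n(R)$; then $\abs{\langle e_Q, (R - F_n) f_Q\rangle} \lesssim a_n(R)$ uniformly in $Q$, and the rearrangement claim is equivalent to the counting bound
$$\#\bigl\{Q \in \mathscr{D}: \abs{\langle e_Q, F_n f_Q\rangle} > C_0\, a_n(R)\bigr\} \lesssim n$$
for a suitably chosen $C_0$.

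The main obstacle lies precisely in this counting estimate. The structural inputs are that $F_n$ has rank at most $n$, and that for each fixed dyadic level $k$ the dilates $\{B_Q^*\}_{Q \in \mathscr{D}_k}$ have uniformly bounded overlap; the latter ensures that $\{e_Q\}_{Q \in \mathscr{D}_k}$ and $\{f_Q\}_{Q \in \mathscr{D}_k}$ form Bessel systems with constants independent of $k$. The difficulty is to combine these into a global cardinality bound across all scales, in the absence of any cross-scale orthogonality. I expect to handle this in the spirit of Hyt\"onen and Korte \cite{H2}: decompose $F_n$ along its singular value decomposition into $n$ rank-one summands, observe that each normalized summand contributes a pairing exceeding the threshold only for cubes $Q$ whose scale matches the $L^2$-concentration of that summand, and combine this scale-matching with the per-scale Bessel bound to conclude that a rank-$n$ operator produces at most $O(n)$ cubes with pairings above the threshold $C_0 a_n(R)$.
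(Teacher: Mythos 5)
The first half of your proposal — applying Corollary \ref{cor:mm} to each $B_Q$, normalising $e_Q=\mu(B_Q)^{-1/2}1_{E_Q}$, $h_Q=\mu(B_Q)^{-1/2}1_{F_Q}$, and reducing \eqref{eq3.3} to the abstract estimate $\|\{\langle e_Q,Rh_Q\rangle\}_Q\|_{\ell^{p,q}}\lesssim\|R\|_{S^{p,q}}$ — is exactly the paper's argument. The divergence, and the gap, is in how that abstract estimate is obtained. The paper does not prove it from scratch: it invokes Proposition \ref{P3.7} (which is \cite[Corollary 7.7]{H2}), whose hypothesis is the boundedness of the bi-sublinear maximal operator $\mathcal{M}(f,g)=\sup_Q 1_Q\,|\langle f,e_Q\rangle\langle g,h_Q\rangle|/\mu(Q)$ from $L^2\times L^2$ to $L^1$. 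The whole point of producing $E_Q,F_Q\subset B_Q^*$ as \emph{subsets of a fixed dilate of} $B_Q$ is that the resulting test functions satisfy the pointwise bound $|e_Q|+|h_Q|\lesssim\mu(Q)^{-1/2}1_{B_Q^*}$, whence $\mathcal{M}^1f\lesssim Mf$ and $\mathcal{M}^2g\lesssim Mg$ pointwise, and the $L^2\times L^2\to L^1$ bound follows from H\"older and the Hardy--Littlewood maximal theorem. Your reformulation of this structural input as ``per-scale Bessel systems with uniformly bounded overlap'' discards precisely the cross-scale information (domination by a single maximal function) that makes the argument close.

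Consequently, the counting bound $\#\{Q:|\langle e_Q,F_nh_Q\rangle|>C_0a_n(R)\}\lesssim n$ that you identify as ``the main obstacle'' is a genuine gap, and the sketch you offer for it does not work as stated. Reducing to a single rank-one summand $u\otimes v$ of norm one, one needs the weak-type bound $\#\{Q:|\langle e_Q,v\rangle\langle u,h_Q\rangle|>\lambda\mu(Q)^{0}\cdots\}\lesssim\lambda^{-1}$ (i.e.\ the sequence $\{\langle (u\otimes v)e_Q,h_Q\rangle\}_Q$ lies in $\ell^{1,\infty}$ uniformly), and a single rank-one operator can produce above-threshold pairings at \emph{unboundedly many scales simultaneously}; the per-scale bounded-overlap bound gives a count at each scale but nothing summable over scales, and ``scale-matching with the $L^2$-concentration of the summand'' is not a substitute, since $u$ and $v$ need not be concentrated at any single scale. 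The correct mechanism is the distributional estimate derived from $\|\mathcal{M}\|_{L^2\times L^2\to L^1}<\infty$ (the Rochberg--Semmes NWO argument, packaged in the paper as Proposition \ref{P3.7}); either quote that result, as the paper does, or reproduce its proof — but the proof is not the rank-decomposition count you describe.
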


To prove Proposition \ref{P3}, motivated by the ideas of \cite{H2} which go back to \cite{RS}, we consider the bi-sublinear maximal operator $\mathcal{M}$ for a sequence $\{(e_Q,h_Q)\}_{Q \in \mathscr{D}}$ of pairs of functions,  
\begin{equation}\label{eq:Mdef}
  \mathcal{M}:(f,g) \mapsto \sup_{Q \in \mathscr{D}} 1_Q 
      \frac{|\langle f,e_Q \rangle \langle g,h_Q \rangle |}{\mu(Q)},
\end{equation}
 and two related maximal operators 
 $$\mathcal{M}^1f:= \sup_{Q \in \mathscr{D}} 1_Q 
      \frac{|\langle f,e_Q \rangle |}{\mu(Q)^{\frac{1}{2}}}, 
      \quad  
      \mathcal{M}^2g:= \sup_{Q \in \mathscr{D}} 1_Q 
      \frac{|\langle g,h_Q \rangle |}{\mu(Q)^{\frac{1}{2}}} .$$
Obviously, $\mathcal{M}(f,g) \leq \mathcal{M}^1 f\cdot \mathcal{M}^2 g$. If the sequence of functions $\{(e_Q,h_Q)\}_{Q \in \mathscr{D}}$ satisfies 
\begin{equation}\label{3.8}
|e_Q|+|h_Q| \lesssim \mu(Q)^{-\frac{1}{2}}1_{B_Q^*},      	
\end{equation}
where $B_Q^*=c\cdot B_Q$ is a concentric extension of $B_Q$ for any fixed constant $c$, then 
$$\mathcal{M}^1f \lesssim Mf,\qquad \mathcal{M}^2g \lesssim Mg,$$  
where $M$ is the Hardy-Littlewood maximal operator. 
Under the assumption (\ref{3.8}), by H\"older's inequality and the $(L^2(\mu),L^2(\mu))$ boundedness of the maximal operator $M$, we obtain 
\begin{equation*}
  \|\mathcal{M}(f,g)\|_{L^1(\mu)}
  \leq \| \mathcal{M}^1f\|_{L^2(\mu)} \| \mathcal{M}^2g\|_{L^2(\mu)} \lesssim  \|f\|_{L^2(\mu)}\|g\|_{L^2(\mu)}.
\end{equation*}
            Thus, the bi-sublinear maximal operator $\mathcal{M}$ for a sequence of functions $\{(e_Q,h_Q)\}_{Q \in \mathscr{D}}$ satisfying (\ref{3.8}), is bounded from $L^2(\mu)\times L^2(\mu)$ to $L^1(\mu)$.
            
We will also need the following result, which is \cite[Corollary 7.7]{H2}:

 \begin{proposition}\label{P3.7}
 	Let $p\in (1,\infty)$ and $q \in [1,\infty]$. For all sequences $\{(e_Q,h_Q)\}_{Q \in \mathscr{D}}$ and their related maximal operator \eqref{eq:Mdef}, we have the following estimate for operators $A\in S^{p,q}(L^2(\mu))$:
 \begin{equation*}
\|\{\langle Ae_Q, h_Q\rangle\}_{Q\in \mathscr{D}} \|_{l^{p,q}(\mathscr{D})}
\lesssim \|\mathcal{M}\|_{L^2(\mu)\times L^2(\mu)\rightarrow L^1(\mu)}
\|A\|_{S^{p,q}(L^2(\mu))}.
 \end{equation*}	 
 \end{proposition}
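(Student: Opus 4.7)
The approach I would take follows the Rochberg--Semmes paradigm, combining the singular value decomposition of $A$ with the bilinear control provided by $\mathcal{M}$ and real interpolation on the Schatten--Lorentz scale.

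First, I would invoke the singular value decomposition $A = \sum_{n\ge 0} s_n(A)\,\phi_n \otimes \psi_n$, where $s_n(A)=a_n(A)$ and $\{\phi_n\}$, $\{\psi_n\}$ are orthonormal systems in $L^2(\mu)$. This yields
$$\langle A e_Q, h_Q\rangle = \sum_{n\ge 0} s_n(A)\, m_{n,Q}, \qquad m_{n,Q} := \langle e_Q, \phi_n\rangle\langle \psi_n, h_Q\rangle.$$
Since $\|A\|_{S^{p,q}} = \|\{s_n(A)\}\|_{\ell^{p,q}(\mathbb{N})}$, it suffices to prove that the linear map $\Phi: \{t_n\}\mapsto \bigl\{\sum_n t_n m_{n,Q}\bigr\}_Q$ is bounded from $\ell^{p,q}(\mathbb{N})$ to $\ell^{p,q}(\mathscr{D})$ with norm $\lesssim \|\mathcal{M}\|_{L^2\times L^2\to L^1}$, uniformly over all choices of orthonormal $\{\phi_n\}$, $\{\psi_n\}$.

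The core pointwise input is that for any $x\in Q$,
$$\frac{|m_{n,Q}|}{\mu(Q)} \leq \mathcal{M}(\phi_n,\psi_n)(x).$$
Combined with the assumed bound $\mathcal{M}: L^2\times L^2 \to L^1$ together with $\|\phi_n\|_{L^2}=\|\psi_n\|_{L^2}=1$, this provides level-set control of the matrix $(m_{n,Q})$. From this I would extract weak-type endpoint estimates for $\Phi$ and then apply real interpolation for sequence Lorentz spaces (in both the $\mathbb{N}$ and $\mathscr{D}$ indices) to obtain the $\ell^{p,q}$ conclusion with the correct constant. The precise endpoints should bracket the range $p\in(1,\infty)$; the restriction $p>1$ reflects the fact that the naive $\ell^1$-type endpoint fails in general because nested dyadic cubes can accumulate mass.

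The main obstacle is the bookkeeping required to transfer the $L^1$-control of $\mathcal{M}(\phi_n,\psi_n)$ into usable summability on the $Q$-indexed side while simultaneously exploiting the orthonormality of $\{\phi_n\},\{\psi_n\}$ through Bessel-type inequalities. Naively distributing $\int \mathcal{M}(\phi_n,\psi_n)\,d\mu$ across the dyadic tree overcounts because cubes at different levels overlap, while summing within a single level loses the orthonormal cancellation. A Calderón--Zygmund-style stopping procedure, combined with a careful grouping of orthonormal bands of comparable singular value magnitude (à la \cite{JW,RS}), is needed to produce the constant $\|\mathcal{M}\|$ and not a worse power thereof. This weaving of orthonormality with the dyadic level structure is precisely where the proof in \cite{H2} concentrates its work, and would be the focal point of a self-contained argument.
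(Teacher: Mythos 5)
The paper does not prove this proposition at all: it is quoted verbatim as \cite[Corollary 7.7]{H2}, so there is no in-paper argument to compare against, and the expected ``proof'' here is simply the citation. Your proposal instead attempts a self-contained argument in the Rochberg--Semmes style, which is indeed the strategy behind the cited result, and its reductions are sound as far as they go: the singular value decomposition, the reduction to uniform boundedness of the map $\Phi$ over orthonormal systems (legitimate, since $A\mapsto\{\langle Ae_Q,h_Q\rangle\}_Q$ is linear and $\|A\|_{S^{p,q}}=\|\{s_n(A)\}\|_{\ell^{p,q}}$), and the pointwise bound $|m_{n,Q}|\leq\mu(Q)\inf_{x\in Q}\mathcal{M}(\phi_n,\psi_n)(x)$ are all correct. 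One can even extract the easy endpoint directly from the hypothesis: testing $\mathcal{M}$ on $e_Q/\|e_Q\|_{L^2}$ and $h_Q/\|h_Q\|_{L^2}$ gives $\sup_Q\|e_Q\|_{L^2}\|h_Q\|_{L^2}\leq\|\mathcal{M}\|_{L^2\times L^2\to L^1}$, hence the $S^\infty\to\ell^\infty$ bound with the right constant.

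However, as a proof the proposal has a genuine gap, and you acknowledge it yourself: the entire analytic content --- converting the $L^1$-control of $\mathcal{M}(\phi_n,\psi_n)$ into summability over the dyadic tree while exploiting orthonormality of $\{\phi_n\},\{\psi_n\}$ --- is only described (``extract weak-type endpoint estimates \dots a Calder\'on--Zygmund-style stopping procedure \dots is needed'') and never carried out. You do not state what the endpoint estimates are, at which exponents they hold, or why they follow from the hypotheses; and since real interpolation cannot reach all of $p\in(1,\infty)$ from the single $p=\infty$ endpoint above, for each target $p$ you would need a strong or restricted weak-type bound at some exponent below $p$, which is precisely the unproven core (and the place where the restriction $p>1$ must enter). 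In short, the skeleton matches the approach of \cite[Corollary 7.7]{H2}, but the proposal is a roadmap rather than a proof; to be acceptable either the stopping-time/orthonormality argument must be executed in detail, or one should simply do what the paper does and cite \cite{H2}.
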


 We are now prepared to give:
 
 \begin{proof}[Proof of Proposition \ref{P3}]
For each $Q \in \mathscr{D}$, we apply Corollary \ref{cor:mm} with $B=B_Q$.
This provides us with subsets $E_Q,F_Q\subset B_Q^*$ such that
\begin{equation*}
m_b^{\phi}(B_Q)
:=\phi(Q) \fint_{B_Q}|b-\langle b\rangle_{B_Q} |d\mu
\lesssim |\langle [b,T](\frac{1_{E_Q}}{\mu(B_Q)^{\frac{1}{2}}}),\frac{1_{F_Q}}{\mu(B_Q)^{\frac{1}{2}}}\rangle |.
\end{equation*} 
Letting
 $$e_Q : = \frac{1_{E_Q}}{\mu(B_Q)^{\frac{1}{2}}}, \qquad
 h_Q : = \frac{1_{F_Q}}{\mu(B_Q)^{\frac{1}{2}}},$$  
the estimate of $m_b^{\phi}$ can be rewritten as 
\begin{equation*}
m_b^{\phi}(B_Q)
\lesssim 	|\langle [b,T](e_Q), h_Q\rangle |,\quad
 \mathrm{with}
\quad
|e_Q|+| h_Q |\lesssim \frac{1_{B_Q^*}}{\mu(B_Q)^{\frac{1}{2}}}.
\end{equation*}

For $q\in [1,\infty]$, $p\in (1,\infty)$, $\ep \in (0,1)$ and $\alpha\in (0,\infty)$, we obtain, from the estimates right above followed by an application of Proposition \ref{P3.7}, that
\begin{align*}
\|b\|_{\mathrm{Osc}_{\phi}^{p,q}}(\mu)
&=\|\{m_b^{\phi}(B_Q)\}_{Q\in \mathscr{D}}\|_{\ell^{p,q}} \\
& \lesssim	\big\|\big\{|\langle [b,T](e_Q), h_Q\rangle |\big\}_{Q\in \mathscr{D}}\big\|_{\ell^{p,q}}\\
& \lesssim \|\mathcal{M}\|_{L^2\times L^2\rightarrow L^1}
\|[b,T]\|_{S^{p,q}}
 \lesssim \|[b,T]\|_{S^{p,q}}.
\end{align*}
as desired.                                                                           
\end{proof}

\section{Characterisation of constants}\label{sec:const}

Motivated by the result from \cite[Proposition 4.1]{HK}, 
we give a characterisation of constants via Besov-type conditions, with the assumption of the Poincar\'e inequality.

\begin{proposition}[\cite{HK}, Proposition 4.1]\label{lemma4.1}                                                                                                                                                                                                                                                                                                                                                                                                                                                                                                                                                                                                                                                                                                                                                                                                                                                                                                                                                                          Let $p \in [1,\infty)$ and $(X,\rho,\mu)$ be a doubling metric measure space supporting the $(1,p)$-Poincar\'e inequality. If $b\in L_{\mathrm{loc}}^1(\mu)$ satisfies                                                                                                                                                                                                                                                                                                                                                                                                                                                                                                                                                                                                                                                                                                                                                                                                                                                                                                                                                                           \begin{equation}\label{4.5}                                                                                                                                                                                                                                                                                                                                                                                                                                                                                                                                                                                                                                                                                                                                                                                                                                                                                                                                                     \lbrack (x,y)\mapsto \frac{|b(x)-b(y)|^p}{\rho(x,y)^p}\frac{1}{V(x,y)}\rbrack \in L_{\mathrm{loc}}^1(\mu\times \mu),	                                                                                                                                                                                                                                                                                                                                                                                                                                                                                                                                                                                                                                                                                                                                                                                                                                                                                                                              \end{equation}
then $f$ is equal to a constant almost everywhere. 
\end{proposition}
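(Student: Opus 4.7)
My plan is to use the integral hypothesis \eqref{4.5} to produce a family of ``scale-$r$ Haj\l asz-type gradients'' of $b$ that tend to zero in $L^p_{\mathrm{loc}}$ as $r\to 0^+$, and then to exploit the $(1,p)$-Poincar\'e inequality to conclude that $b$ is locally constant on $X$, hence globally constant.

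\textbf{Step 1 (Pointwise vanishing from integrability).} By Fubini applied to \eqref{4.5}, for every ball $B_0\subset X$ and $\mu$-a.e.\ $x\in B_0$,
\begin{equation*}
   \int_{B_0}\frac{|b(x)-b(y)|^p}{\rho(x,y)^p V(x,y)}\,d\mu(y)<\infty.
\end{equation*}
For such $x$, dominated convergence as $r\to 0^+$ yields
\begin{equation*}
   \int_{B(x,r)}\frac{|b(x)-b(y)|^p}{\rho(x,y)^p V(x,y)}\,d\mu(y)\to 0,
\end{equation*}
and since $\rho(x,y)\leq r$ and $V(x,y)\leq V(x,r)$ on $B(x,r)$, this forces
\begin{equation*}
   r^{-p}\fint_{B(x,r)}|b(x)-b(y)|^p\,d\mu(y)\to 0.
\end{equation*}

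\textbf{Step 2 (Scale-$r$ gradient).} For $r>0$, set
\begin{equation*}
   g_r(x):=\Big(\int_{B(x,\lambda r)}\frac{|b(x)-b(y)|^p}{\rho(x,y)^p V(x,y)}\,d\mu(y)\Big)^{1/p},
\end{equation*}
with $\lambda$ as in \eqref{PI}. Fubini plus \eqref{4.5} gives $g_r\in L^p_{\mathrm{loc}}(\mu)$, and dominated convergence (with integrable dominant provided by \eqref{4.5}) gives $g_r\to 0$ in $L^p_{\mathrm{loc}}(\mu)$ as $r\to 0^+$.

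\textbf{Step 3 (Poincar\'e-type bound for $b$).} I would then establish that for every ball $B=B(x_0,R)$ and every $0<r\leq R$,
\begin{equation*}
   \fint_B|b-\langle b\rangle_B|\,d\mu\lesssim R\Big(\fint_{C\lambda B}g_r^p\,d\mu\Big)^{1/p}.
\end{equation*}
Since \eqref{PI} is only stated for Lipschitz functions via $\mathrm{lip}\,f$, the bridge to general $b\in L^1_{\mathrm{loc}}(\mu)$ is the main technical issue. I would handle it by Haj\l asz--Lipschitz truncation: for each $t>0$, McShane-extending $b$ off the set $\{Mg_r\leq t\}$ (with $M$ the Hardy--Littlewood maximal operator) yields a Lipschitz approximant $b^t$ satisfying $\mathrm{lip}\,b^t\lesssim t$ on $\{Mg_r\leq t\}$ and $b^t=b$ there. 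Applying \eqref{PI} to $b^t$, using a dyadic chaining argument to aggregate scale-$r$ bounds into a scale-$R$ oscillation estimate, and letting $t\to\infty$ gives the claim.

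\textbf{Step 4 (Conclusion).} Fix any ball $B$ of radius $R$. Sending $r\to 0^+$ in Step 3, combined with $g_r\to 0$ in $L^p_{\mathrm{loc}}(\mu)$ from Step 2, gives
\begin{equation*}
   \fint_B|b-\langle b\rangle_B|\,d\mu=0,
\end{equation*}
so $b\equiv\langle b\rangle_B$ $\mu$-a.e.\ on $B$. Thus $b$ is $\mu$-a.e.\ locally constant. Since a doubling metric measure space supporting a $(1,p)$-Poincar\'e inequality is quasiconvex and hence path-connected, the local constants must agree along any curve, yielding that $b$ is $\mu$-a.e.\ a single constant.

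\textbf{Main obstacle.} Step~3 is the heart of the proof: constructing Haj\l asz-Lipschitz truncations whose pointwise Lipschitz constant is dominated by $Mg_r$, and then chaining scale-$r$ data up to scale $R$. Both rely critically on doubling and on the self-improvement of the Poincar\'e inequality, in the spirit of Haj\l asz's classical characterization of $M^{1,p}$ on metric measure spaces; verifying that these standard ingredients apply under the precise hypotheses in force is where the technical effort concentrates.
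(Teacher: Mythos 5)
The paper does not prove this proposition at all: it is imported verbatim as \cite[Proposition 4.1]{HK} and used as a black box, so there is no internal proof to compare against. Judged on its own, your proposal is a plausible strategy in the same spirit as the Korevaar--Schoen/approximate-gradient arguments that underlie results of this type, and Steps 1, 2 and 4 are essentially correct (the Fubini/dominated-convergence argument showing $r^{-p}\fint_{B(x,r)}|b(x)-b(y)|^p\,d\mu(y)\to 0$ a.e.\ and $g_r\to 0$ in $L^p_{\mathrm{loc}}$ is fine, and connectedness of PI spaces handles the local-to-global passage).

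However, Step 3 is not a proof but a declaration of intent, and it is precisely where the entire content of the proposition lies. Two concrete problems. First, the inequality you posit, $\fint_B|b-\langle b\rangle_B|\,d\mu\lesssim R(\fint_{C\lambda B}g_r^p)^{1/p}$ uniformly in $0<r\leq R$, is not what a dyadic chaining argument delivers: chaining from scale $R$ down to scale $r$ produces a sum of gradient contributions over all intermediate scales $2^{-j}R$, $0\leq j\leq\log_2(R/r)$, and the scale-$r$ quantity $g_r$ does not control the coarser-scale ones. To get a right-hand side that vanishes as $r\to 0$ one must instead pass to a genuine $p$-weak upper gradient as a (subsequential, weak) limit of the $g_r$ --- via Mazur's and Fuglede's lemmas, or via the identification of Korevaar--Schoen and Newtonian Sobolev spaces on PI spaces --- and then invoke ``zero upper gradient plus Poincar\'e implies constant''. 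Second, your $g_r$ is an $L^p$-averaged difference quotient, not a scale-$r$ Haj\l asz gradient (a pointwise bound $|b(x)-b(y)|\leq\rho(x,y)(h(x)+h(y))$ for $\rho(x,y)\leq r$), so the Haj\l asz--Lipschitz truncation you invoke does not apply to it directly; converting the averaged control into pointwise control costs a maximal function and a telescoping argument that must be written out, and the passage from \eqref{PI} (stated only for Lipschitz functions and $\mathrm{lip}$) to the pair $(b^t,\,c\,t)$ requires justification. As it stands the proposal identifies the right toolbox but leaves the theorem unproved.
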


\begin{corollary}\label{corHK}
Let $d\in[1,\infty)$, $\eps\in[0,1-\frac1d]$, and $\alpha=\eps d\in[0,d-1]$.
Let $(X,\rho,\mu)$ be a space of homogeneous type with lower dimension $d$
and supporting the $(1,p)$-Poincar\'e inequality with $p=\frac{d}{1+\ep d}=\frac{d}{1+\alpha}$.
Let $\phi$ be as in \eqref{eq:phixy} and \eqref{phi}
If $b\in L_{\mathrm{loc}}^1(\mu)$ satisfies                                                                                                                                                                                                                                                                                                                                                                                                                                                                                                                                                                                                                                                                                                                                                                                                                                                                                                                       \begin{equation}\label{locBesov}                                                                                                                                                                                                                                                                                                                                                                                                                                                                                                                                                                                                                                                                                                                                                                                                                                                                                                                                             \lbrack (x,y)\mapsto \frac{|b(x)-b(y)|^p}{V(x,y)^2}\phi(x,y)^p\rbrack \in L_{\mathrm{loc}}^1(\mu\times \mu),	                                                                                                                                                                                                                                                                                                                                                                                                                                                                                                                                                                                                                                                                                                                                                                                                                                                                                                                    \end{equation}
then $b$ is equal to a constant almost everywhere. In particular,
\begin{equation}\label{BisConst}
		\B_{\frac{d}{1+\eps d}}(\phi,\mu)
		= \dot B^{\frac 1d}_{\frac{d}{1+\eps d}}(\mu)
		= \widetilde B^\alpha_{\frac{d}{1+\alpha}}(\mu)
		= \{constants\}.
\end{equation}
\end{corollary}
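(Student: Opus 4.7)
The plan is to reduce the assertion to Proposition \ref{lemma4.1} applied with exponent $p = \frac{d}{1+\eps d} = \frac{d}{1+\alpha}$. Our hypothesis $\eps \in [0, 1 - 1/d]$ ensures $p \in [1, d]$, so the assumed $(1,p)$-Poincar\'e inequality is meaningful and Proposition \ref{lemma4.1} applies at this exponent. It therefore suffices to show that the integrability hypothesis \eqref{locBesov} implies the metric condition \eqref{4.5} of that proposition.

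The key step will be a local comparison of volume and distance: I claim that $V(x,y) \lesssim \rho(x,y)^d$ uniformly on any bounded set $E \subset X$. This follows directly from \eqref{dD'}: fixing any radius $R$ exceeding the diameter of $E$ as seen from a reference point, the lower dimension estimate applied with $r = \rho(x,y)$ gives $V(x, \rho(x,y)) \leq \widetilde{C}_\mu^{-1} V(x, R)(\rho(x,y)/R)^d$, and $V(x, R)$ is uniformly bounded over $x \in E$ by doubling. With this comparison in hand, I would treat the two forms of $\phi$ separately. For $\phi(x,y) = \rho(x,y)^\alpha$, the identity $p(1+\alpha) = d$ yields
\begin{equation*}
\frac{|b(x)-b(y)|^p}{\rho(x,y)^p V(x,y)} = \frac{|b(x)-b(y)|^p \rho(x,y)^{p\alpha}}{V(x,y)^2} \cdot \frac{V(x,y)}{\rho(x,y)^d},
\end{equation*}
whose second factor is locally bounded. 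For $\phi(x,y) = V(x,y)^\eps$, the algebraic identities $1 - p\eps = \frac{1}{1+\eps d}$ and $p = d(1-p\eps)$ convert the analogous ratio into $(V(x,y)/\rho(x,y)^d)^{1-p\eps}$, again locally bounded. Either way, \eqref{locBesov} implies \eqref{4.5} on every bounded set, and Proposition \ref{lemma4.1} then forces $b$ to be constant almost everywhere.

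The identifications in \eqref{BisConst} are then pure definitional matching: by \eqref{b2}, the volumic space $\dot B^{\frac 1d}_{\frac{d}{1+\eps d}}(\mu)$ coincides with $\B_{\frac{d}{1+\eps d}}(V^\eps, \mu)$ (since $\frac{1}{p} - \frac{1}{d} = \eps$ for our chosen $p$); by \eqref{b2 variant}, the metric space $\widetilde B^\alpha_{\frac{d}{1+\alpha}}(\mu)$ equals $\B_{\frac{d}{1+\alpha}}(\rho^\alpha, \mu)$; and the reverse inclusion $\{\text{constants}\} \subseteq \B_p(\phi,\mu)$ is trivial. The only delicate point in the whole argument is the volume-versus-distance comparison, which works precisely because $p = d/(1+\eps d)$ is the critical exponent at which the scalings match up exactly: any larger $p$ would break the passage from \eqref{locBesov} to \eqref{4.5}, which is presumably why the hypothesis on $\eps$ (hence $p$) is tuned exactly this way.
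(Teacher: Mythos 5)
Your proposal is correct and follows essentially the same route as the paper's own proof: the local comparison $V(x,y)\lesssim_{B_0}\rho(x,y)^d$ via the lower dimension bound \eqref{dD'}, the two-case algebraic reduction of \eqref{locBesov} to the hypothesis \eqref{4.5} of Proposition \ref{lemma4.1} using $p(1+\alpha)=d$, and the definitional identifications for \eqref{BisConst}. The algebra in both cases checks out, so nothing further is needed.
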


 \begin{proof}
Suppose that $b$ satisfies \eqref{locBesov}. 

 Fix a ball $B_0:=B(x_0,R)$. For all $x,y \in B_0$, we have 
 $$V(x,y)\lesssim \left(\frac{\rho(x,y)}{R}\right)^d\mu(B(x,R)) \lesssim \left(\frac{\rho(x,y)}{R}\right)^d \mu(B_0)\lesssim_{B_0}\rho(x,y)^d.$$	

{\bf Case 1:} $\phi(x,y)=V(x,y)^\eps$. Then
\begin{equation*}
  \frac{1}{\rho(x,y)^p V(x,y)}\lesssim\frac{1}{V(x,y)^{\frac pd+1}}
  =\frac{V(x,y)^{p(\frac1p-\frac 1d)}}{V(x,y)^2}
  =\frac{V(x,y)^{p\ep}}{V(x,y)^2}=\frac{\phi(x,y)^p}{V(x,y)^2}.
\end{equation*}

{\bf Case 2:} $\phi(x,y)=\rho(x,y)^\alpha$. Then
\begin{equation*}
  \frac{1}{\rho(x,y)^p V(x,y)}
  =\frac{\rho(x,y)^{\alpha p}}{\rho(x,y)^{(1+\alpha)p} V(x,y)}
  =\frac{\rho(x,y)^{\alpha p}}{\rho(x,y)^{d} V(x,y)}
  \lesssim\frac{\rho(x,y)^{\alpha p}}{V(x,y)^2}=\frac{\phi(x,y)^p}{V(x,y)^2}.
\end{equation*}

Hence, in either case,
\begin{equation*}
   \frac{|b(x)-b(y)|^p}{\rho(x,y)^p}\frac{1}{V(x,y)}
   \lesssim
   \frac{|b(x)-b(y)|^p}{V(x,y)^2}\phi(x,y)^p,
\end{equation*}
which is integrable over $B_0\times B_0$ by \eqref{locBesov}. Since this holds for every ball $B_0$, we see that \eqref{4.5} is satisfied. Then Proposition \ref{lemma4.1} shows that $b$ is constant.

Clearly, every $b\in \B_p(\phi,\mu)$ satisfies \eqref{locBesov}, and hence is constant by what we just proved. It follows from the definitions that
\begin{equation*}
  \B_p(V^\eps,\mu)=\dot B_p^{\frac1p-\eps}(\mu)=\dot B_{\frac{d}{1+\eps d}}^{\frac1d}(\mu),\qquad
  \B_p(\rho^\alpha,\mu)=\widetilde B_p^{\alpha}(\mu)=\widetilde B^{\alpha}_{\frac{d}{1+\alpha}}(\mu),
\end{equation*}
and hence both these spaces consists only of constants.
 \end{proof}

\section{Proof of the main results}\label{sec:synth}

\begin{proof}[Proof of Theorem \ref{T}]
The conclusion of Theorem \ref{T} (\ref{T:p>2}) is directly deduced from Corollary \ref{C1}. By combination of Proposition \ref{P2} (equivalence of the Besov norm and the oscillation norm) and Proposition \ref{P3} (domination of the oscillation norm by the Schatten norm), we obtain the result of Theorem \ref{T} (\ref{T:p<1}). Finally, Theorem \ref{T} (\ref{T:1<p<d}) follows from Corollary \ref{corHK}.
\end{proof}

\begin{proof}[Proof of Corollary \ref{C} \eqref{C:p>d}:]
Since $p\ge 2$, $\ep \in (0,1)$ and $\alpha \in (0,\infty)$, applying Theorem \ref{T} \eqref{T:p>2}, we get
$$\Norm{[b,T_\ep]}_{S^p}\lesssim\Norm{b}_{\dot{B}_p^{\frac1p-\ep}(\mu)}
\qquad\mathrm{and}\qquad
\Norm{[b,\widetilde{T_{\alpha}}]}_{S^p}\lesssim\Norm{b}_{\widetilde{B}_{p}^{\alpha}(\mu)}.$$ 
Since $p >1$ and $\ep \in (0,\frac{1}{p})\subset (0,1)$, applying Theorem \ref{T} \eqref{T:p<1}, we get 
$$\Norm{b}_{\dot{B}_p^{\frac1p-\ep}(\mu)}\lesssim \Norm{[b,T_\ep]}_{S^p}
\qquad\mathrm{and}\qquad
\Norm{b}_{\widetilde{B}_{p}^{\alpha}(\mu)}\lesssim \Norm{[b,\widetilde{T_{\alpha}}]}_{S^p}.$$ 
Combining the above two inequalities, we complete the proof of Corollary  \ref{C} \eqref{C:p>d}.
\end{proof}

\begin{proof}[Proof of Corollary \ref{C} \eqref{C:p<d}:]
Since $p\in (1,2)\subset (1,\infty)$ and $\ep \in (\max\{0,\frac{1}{p}-\frac{1}{d}\},\frac1p)\subset (0,\frac{1}{p})$, applying \ref{T} \eqref{T:p<1}, we obtain 
$$\Norm{b}_{\dot{B}_p^{\frac1p-\ep}(\mu)}\lesssim \Norm{[b,T_\ep]}_{S^p}
\qquad\mathrm{and}\qquad
\Norm{b}_{\widetilde{B}_{p}^{\alpha}(\mu)}\lesssim \Norm{[b,\widetilde{T_{\alpha}}]}_{S^p},$$
which complete the proof of Corollary  \ref{C} \eqref{C:p<d}. 
\end{proof}

\begin{proof}[Proof of Corollary \ref{C} \eqref{C:const}]
``$\Leftarrow$'': It is obvious that if $b$ is constants, then $[b,T_{\ep}],[b,\widetilde{T_{\alpha}}]\in S^p$ for all relevant parameters values.
 
``$\Rightarrow$'': For $\eps\in(0,\frac1p-\frac1d]\cap(0,1-\frac1d)$,
we have $p\leq \frac{d}{1+d\ep}=:q$,
where $\frac1q=\frac1d+\eps<1$ and hence $q>1$.
We apply Theorem \ref{T} \eqref{T:p<1} to $q$ in place of $p$ and \eqref{4.4} to get 
\begin{align*}
[b,T_{\ep}]\in S^p \subset S^q=S^{\frac{d}{1+d\ep}}\, \Rightarrow \,b\in 	\dot{B}_{\frac{d}{1+d\ep}}^{\frac{1}{d}}(\mu)\equiv \{constants\},
\end{align*}
and 
\begin{align*}
[b,\widetilde{T_{\alpha}}]\in S^p \subset S^q=S^{\frac{d}{1+\alpha}}\, \Rightarrow \,b\in 	\widetilde{B}_{\frac{d}{1+\alpha}}^{\alpha}(\mu)\equiv \{constants\}.
\end{align*}
\end{proof}

\begin{proof}[Proof of Corollary \ref{C'}]
Observe that for $0<\ep<\2-\frac{1}{d}$, we have $\frac{d}{1+d\ep}>2$.
By Remark \ref{R1.8}, the space $(X,\rho,\mu)$ also satisfies the $(1, \frac{d}{1+d\ep})$-Poincar\'e inequality.

\textit{Part \eqref{C:p>d'}:} If $p\in [\frac{d}{1+d\ep},\infty)\subset (2,\infty)$, then by Theorem \ref{T} \eqref{T:p>2} and \eqref{T:p<1}, we obtain 
\begin{equation}\label{eq7.6}
\begin{cases}
		[b,T_{\ep}]\in S^p &\iff  \,\,\, b \in \dot{B}_p^{\frac1p-\ep}(\mu),\\
		[b,\widetilde{T_{\alpha}}]\in S^p &\iff \,\,\, b \in \widetilde{B}_p^{\alpha}(\mu).
	\end{cases}
	\end{equation}
	In particular, this holds for $p \in (\frac{d}{1+d\ep},\infty)$.
	
\textit{Part \eqref{C:p<d'}:}
If $0<p\leq\frac{d}{1+d\ep}=:q$, then by taking $p=q$ in \eqref{eq7.6}
 and Theorem \ref{T} \eqref{T:1<p<d}, one has  
	\begin{align*}
[b,T_{\ep}]\in S^p \subset S^q=S^{\frac{d}{1+d\ep}}\, \Rightarrow \,b\in 	\dot{B}_{\frac{d}{1+d\ep}}^{\frac{1}{d}}(\mu)\equiv \{constants\},
\end{align*}
and 
\begin{align*}
[b,\widetilde{T_{\alpha}}]\in S^p \subset S^q=S^{\frac{d}{1+\alpha}}\, \Rightarrow \,b\in 	\widetilde{B}_{\frac{d}{1+\alpha}}^{\alpha}(\mu)\equiv \{constants\}.
\end{align*}
\end{proof}

\section{Fractional integrals with regular kernels}\label{sec:regular}

Our main results in this paper, whose proofs we have just completed, only depend on the size and not on the any regularity of the kernel $K$ of the fractional integral. Nevertheless, many examples of fractional integrals arising in applications also possess additional regularity, and we hence make some comments on this situation. In particular, we observe that, with some mild regularity of the kernel, the strong non-degeneracy that we have assumed (a uniform estimate over all points in certain balls) already follows from a simpler non-degeneracy condition involving a pair of points only.

\begin{definition}\label{D1reg}
A $\phi$-fractional integral kernel (Definition \ref{D1}) is called
\begin{enumerate}
 \item {\em $\omega$-regular} if it satisfies
\begin{equation}\label{1.2}
|K(x,y)-K(x',y)|+|K(y,x)-K(y,x')|\leq \omega\Big(\frac{\rho(x,x')}{\rho(x,y)}\Big)\frac{\phi(x,y)}{V(x,y)}
\end{equation}
for $\rho(x,x')<(2A_0)^{-1}\rho(x,y)$, where $\omega$ is a bounded function with
 $\displaystyle\lim_{t\to 0}\omega(t)=0$;
     \item {\em non-degenerate} if there are positive constants $c_0$ and $\overline{C}$ such that for every $x \in X$ and $r>0$, there exists a point $y \in B(x,\overline{C}r) \setminus B(x,r)$ such that
\begin{equation}\label{Non-degenerate}
|K(x,y)|+|K(y,x)|\ge c_0\cdot \frac{\phi(x,y)}{V(x,y)},
\end{equation} 
 \end{enumerate}
\end{definition}

The following lemma clarifies the connection between the two versions of non-degeneracy.
 
 \begin{lemma}\label{lem:ndg}
 Let $K$ be a $\phi$-fractional integral kernel (Definition \ref{D1}). If $K$ is non-degenerate and $\omega$-regular (Definition \ref{D1reg}), then $K$ is strongly non-degenerate (Definition \ref{D1'}).
 \end{lemma}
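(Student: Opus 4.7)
The plan is to amplify the non-degeneracy condition at a single pair of points $(x_0,y_0)$ into a uniform bound over the pair of balls $B(x_0,r)\times B(y_0,r)$ by absorbing the variation through the regularity estimate \eqref{1.2}. The parameter $A$ in Definition \ref{D1'} will be chosen large enough (depending only on $\omega$ and the doubling constants) that the regularity error is dominated by a small fraction of $|K(x_0,y_0)|$.

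First, given $x_0\in X$ and $r>0$, apply the non-degeneracy \eqref{Non-degenerate} at scale $Ar$ (with $A\geq 1$ to be fixed): this produces $y_0\in B(x_0,\overline C Ar)\setminus B(x_0,Ar)$ with
\begin{equation*}
  |K(x_0,y_0)|+|K(y_0,x_0)|\geq c_0\cdot\frac{\phi(x_0,y_0)}{V(x_0,y_0)}.
\end{equation*}
At least one of the two summands is $\geq\tfrac{c_0}{2}\phi(x_0,y_0)/V(x_0,y_0)$, and the two cases are handled symmetrically by swapping the arguments of $K$; assume it is $|K(x_0,y_0)|$, which will correspond to option \eqref{eq:sndg1} in Definition \ref{D1'}. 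Set $v:=K(x_0,y_0)/|K(x_0,y_0)|$.

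Next, fix arbitrary $x\in B(x_0,r)$ and $y\in B(y_0,r)$, and decompose
\begin{equation*}
  K(x,y)-K(x_0,y_0)=\bigl[K(x,y)-K(x,y_0)\bigr]+\bigl[K(x,y_0)-K(x_0,y_0)\bigr].
\end{equation*}
For large enough $A$ (say $A\geq 4A_0\overline{C}$), doubling yields $\rho(x,y_0)\sim\rho(x_0,y_0)\sim Ar$, both distances $\rho(x,x_0)$ and $\rho(y,y_0)$ are $\leq r\ll Ar$ (so the smallness hypothesis of \eqref{1.2} holds), and $\phi(x,y_0)/V(x,y_0)\sim\phi(x_0,y_0)/V(x_0,y_0)$. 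Hence the $\omega$-regularity \eqref{1.2} (applied once in each variable) gives
\begin{equation*}
  |K(x,y)-K(x_0,y_0)|\leq C\,\omega(1/A)\cdot\frac{\phi(x_0,y_0)}{V(x_0,y_0)}
  \leq \frac{2C}{c_0}\,\omega(1/A)\cdot|K(x_0,y_0)|.
\end{equation*}

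Finally, choose $A$ so large that $\tfrac{2C}{c_0}\omega(1/A)\leq\min\{\tfrac12,\sin\eta\}$ with $\eta=\pi/9$. Then $|K(x,y)|\geq\tfrac12|K(x_0,y_0)|\gtrsim\phi(x_0,y_0)/V(x_0,y_0)$, and the desired lower bound in terms of $\phi(x_0,r)/V(x_0,r)$ follows from the doubling of $\mu$ and the monotonicity of $\phi$ (which absorbs the fixed power of $A$ into the constant $c_1$). The argument bound is immediate: writing $\bar v K(x,y)=|K(x_0,y_0)|+\bar v(K(x,y)-K(x_0,y_0))$, the second term has modulus at most $\sin\eta\cdot|K(x_0,y_0)|$, so $|\arg(\bar v K(x,y))|\leq\eta$, as required in \eqref{eq:sndg1}. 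The case starting from $|K(y_0,x_0)|\geq\tfrac{c_0}{2}\phi(x_0,y_0)/V(x_0,y_0)$ is handled identically and yields option \eqref{eq:sndg2}. The only delicate point is bookkeeping the symmetric-in-arguments use of \eqref{1.2} together with doubling to control the various ratios $\phi/V$ evaluated at shifted pairs; but since $A$ ends up being a universal constant, all such factors are harmless.
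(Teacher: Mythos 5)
Your proposal is correct and follows essentially the same route as the paper: apply non-degeneracy at scale $Ar$, split into the two symmetric cases, control $K(x,y)-K(x_0,y_0)$ by a two-step telescoping via the $\omega$-regularity, and choose $A$ large so that the error is a small fraction of $|K(x_0,y_0)|$, which yields both the size and the argument bounds. Your quantitative treatment of the argument condition via the bound $|\bar v(K(x,y)-K(x_0,y_0))|\leq\sin\eta\cdot|K(x_0,y_0)|$ is a slightly more explicit version of the paper's $|\arg(1+\zeta/\sigma)|\lesssim\eps$ step, but the substance is identical.
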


\begin{proof}
Given $x_0\in X$ and $r>0$, we apply the condition of non-degeneracy with $x_0$ in place of $x$ and $Ar$ in place of $r$, where $A$ is yet to be chosen. This gives us a point $y_0\in B(x_0,\overline CAr)\setminus B(x_0,Ar)$ such that
\begin{equation*}
  \abs{K(x_0,y_0)}+\abs{K(y_0,x_0)}\geq c_0\frac{\phi(x_0,y_0)}{V(x_0,y_0)},
\end{equation*}
thus
\begin{equation*}
   \abs{K(x_0,y_0)}\geq \frac{c_0}{2}\frac{\phi(x_0,y_0)}{V(x_0,y_0)},\quad\text{or}\quad
   \abs{K(y_0,x_0)}\geq \frac{c_0}{2}\frac{\phi(x_0,y_0)}{V(x_0,y_0)}.
\end{equation*}
We assume the first case and proceed to prove \eqref{eq:sndg1}. In the second case, we obtain \eqref{eq:sndg2} analogously. Since $y_0\in B(x_0,\overline CAr)\setminus B(x_0,Ar)$, it follows that $\phi(x_0,y_0)\sim \phi(x_0,Ar)$ and $V(x_0,y_0)\sim V(x_0,Ar)$.

Let $x\in B(x_0,r)$, $y\in B(y_0,r)$. When $A$ is large, it is clear that we can apply the $\omega$-regularity below to estimate
\begin{equation}\label{eq:Kdiff}
\begin{split}
  \abs{K(x,y)-K(x_0,y_0)}
  &\leq\abs{K(x,y)-K(x_0,y)}+\abs{K(x_0,y)-K(x_0,y_0)} \\
  &\leq \omega\Big(\frac{\rho(x,x_0)}{\rho(x_0,y)}\Big)\frac{\phi(x_0,y)}{V(x_0,y)}
  +  \omega\Big(\frac{\rho(y,y_0)}{\rho(x_0,y_0)}\Big)\frac{\phi(x_0,y_0)}{V(x_0,y_0)} \\
  &\lesssim \Big[\omega\Big(\frac{r}{\rho(x_0,y)}\Big)
  +  \omega\Big(\frac{r}{\rho(x_0,y_0)}\Big)\Big]\frac{\phi(x_0,y_0)}{V(x_0,y_0)}.
\end{split}
\end{equation}
Here $Ar\leq\rho(x_0,y_0)\leq A_0(\rho(x_0,y)+\rho(y,y_0))< A_0\rho(x_0,y)+A_0 r$, and hence
\begin{equation*}
  \rho(x_0,y)>\frac{A-A_0}{A_0}r\geq 2A_0 r 
\end{equation*}
provided that $A\geq A_0+2A_0^2$.

Substituting back to \eqref{eq:Kdiff}, we obtain
\begin{equation*}
  \abs{K(x,y)-K(x_0,y_0)}
  \lesssim \Big[\omega\Big(\frac{A_0}{A-A_0}\Big)
  +\omega\Big(\frac{1}{A}\Big)\Big]\frac{\phi(x_0,y_0)}{V(x_0,y_0)}.
\end{equation*}
Recalling that $\displaystyle\lim_{t\to 0}\omega(t)=0$, choosing $A=A(\eps)$ large enough, we can guarantee that
\begin{equation*}
  \abs{K(x,y)-K(x_0,y_0)}\leq \eps\frac{\phi(x_0,y_0)}{V(x_0,y_0)}
\end{equation*}
for any given $\eps>0$. Then clearly $\abs{K(x,y)}\geq\abs{K(x_0,y_0)}-\abs{K(x,y)-K(x_0,y_0)}$ has the required lower bound.

Moreover, denoting by $v$ the complex unit in the direction of $K(x,y)$, we have
\begin{equation*}
  \bar v K(x,y)
  =\bar v K(x_0,y_0)+\bar v [K(x,y)-K(x_0,y_0)]=:\sigma+\zeta
  =\sigma(1+\frac{\zeta}{\sigma}),
\end{equation*}
where $\sigma=\abs{K(x_0,y_0)}\gtrsim \phi(x_0,y_0) V(x_0,y_0)^{-1}=:\tau$, and $\abs{\zeta}\leq\eps\tau$; hence $\abs{\zeta/\sigma}\lesssim\eps$. Then
\begin{equation*}
  \abs{\arg(\bar v K(x,y))}=\abs{\arg(1+\frac{\zeta}{\sigma})}\lesssim\eps
\end{equation*}
can also made as small as desired with sufficiently small $\eps$, i.e., sufficiently large $A$.

Noting that, for $A$ large but fixed,
\begin{equation*}
  \frac{\phi(x_0,y_0)}{V(x_0,y_0)}\sim\frac{\phi(x_0,Ar)}{V(x_0,Ar)}\sim\frac{\phi(x_0,r)}{V(x_0,r)},
\end{equation*}
this completes the proof.
\end{proof}

\begin{example}\label{ex:basic2} 
The basic fractional integrals $I_\eps$ and $\tilde I_\alpha$ from \eqref{def} are strongly non-degenerate $\phi$-fractional integrals with $\phi(x,y)=V(x,y)^\eps$ and $\phi(x,y)=\rho(x,y)^\alpha$, respectively, but not necessarily $\omega$-regular.
\end{example}

\begin{proof}
The fact that $I_\eps$ and $\tilde I_\alpha$ are strongly non-degenerate $\phi$-fractional integrals was already observed in Example \ref{ex:basic}.

In a general space of homogeneous type, $V(x,y)$ need not be continuous, and hence $\omega$-regularity may easily fail. For example, let $n\geq 2$ and
\begin{equation*}
  X=\{x=(x_i)_{i=1}^n\in\R^n\mid \exists i:x_i\in\Z\}
\end{equation*}
with the $\ell^\infty$ metric and the $(n-1)$-dimensional Lebesgue measure. This is an Ahlors $(n-1)$-regular space of homogeneous type, but $V(x,y)$ has jumps at the points where $x$ or $y$ has more than one integer coordinate.
\end{proof}

\begin{remark}
 By Lemma \ref{lem:ndg}, $\omega$-regularity together with non-degeneracy is a sufficient condition for strong non-degeneracy, and this is often convenient in applications, but Example \ref{ex:basic} shows that this condition is not necessary; instead, strong non-degeneracy without any regularity is a strictly more general property. Hence we have formulated the main theorem below in terms of this latter condition.
 \end{remark}

\section{Fractional integrals arising from heat kernels}\label{heat kernel}

There is an extensive literature on heat kernels on manifolds and more general metric measure spaces, see e.g. \cite{GY,G,LY}. In this section, we show that negative fractional powers of the generator of a semigroup with a heat kernel are metric fractional integrals in the sense of this paper. In particular, their commutators will be in the scope of our results. We will detail this application in the specific setting of fractional Bessel operators in Section~\ref{sec:Bessel}, but we first deal with a more general setting here.

We begin by giving the definition of heat kernels and recalling related facts from \cite{G}.

\begin{definition}[Heat kernel]\label{def:heat-kernel}
Let $(X,\rho,\mu)$ be a metric measure space. A family $\{p_t\}_{t>0}$ of measurable functions on $X\times X$ 
is called a \emph{heat kernel} if for all $s,t>0$ and almost all $x,y \in X $, it satisfies
\begin{flalign*}
\quad \quad \text{(i)}\quad & p_t(x,y) \geq 0; &&\\
\text{(ii)}\quad & \int_{X} p_t(x,y)\, d\mu(y) = 1; &&\\
\text{(iii)}\quad & p_t(x,y) = p_t(y,x); &&\\
\text{(iv)}\quad & p_{s+t}(x,y) = \int_{X} p_s(x,z)p_t(z,y)\, d\mu(z); &&\\
\text{(v)}\quad & \lim_{t\to 0^+}\int_{X} p_t(x,y) f(y)\, d\mu(y)
= f(x)\quad\text{in the $L^2$-sense for all } f \in L^2(X,\mu). &&
\end{flalign*}
\end{definition}
For any $t>0$, the heat kernel $p_t(\cdot,\cdot)$ is the kernel of an operator $P_t$, which we write as $e^{-t\mathcal{L}}$.
The generator $\mathcal{L}$ of the semigroup $\{e^{-t\mathcal{L}}\}_{t>0}$ is defined by
\[
\mathcal{L}f := \lim_{t\to 0} \frac{f - e^{-t\mathcal{L}}(f)}{t},
\]
for those $f\in L^2(X,\mu)$ for which the limit exists in $L^2(X,\mu)$. 
The operator $\mathcal{L}$ is self-adjoint and positive definite. For any $s>0$, the corresponding fractional operator $\mathcal{L}^{-s}$ is defined by  
\begin{equation}\label{L(-s)}
\mathcal{L}^{-s} f(x)=\frac{1}{\Gamma(s)}\int_{0}^{\infty}e^{-t \mathcal{L}}(f)(x)\frac{dt}{t^{1-s}},\quad \forall x \in X.
\end{equation}

Next, we consider the following assumptions for the heat kernel $p_t(\cdot,\cdot)$.
	Let $\gamma>0$. Suppose that $\Phi_i:[0,\infty)\rightarrow [0,\infty)$ is a non-negative function for any $i=1,2,3$. The heat kernel $p_t(\cdot,\cdot) $ satisfies the following conditions: 
	\begin{enumerate}[\rm(i)]
		\item  \label{p_t 1}For all $x,y \in X $,
	\begin{equation}\label{p_t1}
p_t(x,y)\lesssim  \frac{1}{V(x,t^{\gamma})}
\Phi_1\big(\frac{\rho(x,y)}{t^{\gamma}}\big);
\end{equation} 

		\item  \label{p_t 1'}For all $x,y \in X $,
\begin{equation}\label{p_t1'}
p_t(x,y)\gtrsim  \frac{1}{V(x,t^{\gamma})}
\Phi_2\big(\frac{\rho(x,y)}{t^{\gamma}}\big);
\end{equation}

\item \label{p_t 2} For $\rho(x,x')<(2A_0)^{-1}\rho(x,y)$ and some $\ep>0$,
\begin{equation}\label{p_t2}
|p_t(x,y)-p_t(x',y)|+|p_t(y,x)-p_t(y,x')| \lesssim \frac{1}{V(x,t^{\gamma})}\big(\frac{\rho(x,x')}{t^{\gamma}}\big)^{\ep}
\Phi_3\big(\frac{\rho(x,y)}{t^{\gamma}}\big).
\end{equation}
 		\end{enumerate}
 		
Since the fundamental results of Li and Yau \cite{LY} proving bounds of this type for the heat kernel of the Laplace--Beltrami operator on a complete Riemannian manifold of nonnegative Ricci curvature (in which case $\gamma=\frac12$ and $\Phi_j(u)=e^{-c_j u^2}$), many further situations giving rise to such heat kernel bounds have been explored in the literature. A very general form of such bounds has been recently studied in \cite{GY}.

  
 The following is the main result of this section:
 
\begin{proposition}\label{GP}
Let $\gamma,s,\ep>0$.
Let $(X,\rho,\mu)$ be a space of homogeneous type with a lower dimension $d>0$ and an upper dimension 	$D>0$. Suppose that the fractional operator $\mathcal{L}^{-s}$ is defined as in \eqref{L(-s)} and $p_t(\cdot,\cdot)$ is the heat kernel associated to $\mathcal{L}$. Let $K_{s}(\cdot,\cdot)$ be the kernel of the fractional operator $\mathcal{L}^{-s}$. 
Then the following statements hold: 
\begin{enumerate} [\rm(i)]
	\item 
If $p_t(\cdot,\cdot)$ satisfies \eqref{p_t1} and $\Phi_1$ satisfies 
\begin{equation}\label{Phi_1}
 \int_0^1    \Phi_1(w)\, w^{d-\frac{s}{\gamma}}\frac{dw}{w}+ 
 \int_1^{\infty}    \Phi_1(w)\, w^{D-\frac{s}{\gamma}}\frac{dw}{w}< \infty,
\end{equation}
then for all $x,y \in X$,
\begin{equation}\label{GP1}
 K_{s}(x,y)	\lesssim \frac{\rho(x,y)^{\frac{s}{\gamma}}}{V(x,y)};
\end{equation}

\item
If $p_t(\cdot,\cdot)$ satisfies \eqref{p_t1'} and $\Phi_2$ is non-zero in a set of positive measure,
then for all $x,y \in X$,
\begin{equation}\label{GP1'}
 K_{s}(x,y)	\gtrsim \frac{\rho(x,y)^{\frac{s}{\gamma}}}{V(x,y)};
\end{equation}

\item If  $p_t(\cdot,\cdot)$ satisfies \eqref{p_t2} and the function $\Phi_3$ satisfies 
 \begin{equation}\label{Phi_2}
 \int_0^1    \Phi_3(w)\, w^{d+\ep-\frac{s}{\gamma}}\frac{dw}{w}+ 
 \int_1^{\infty}    \Phi_3(w)\, w^{D+\ep-\frac{s}{\gamma}}\frac{dw}{w}< \infty,
\end{equation}
then for $\rho(x,x')<(2A_0)^{-1}\rho(x,y)$,
\begin{equation}\label{GP2}
|K_s(x,y)-K_s(x',y)|+|K_s(y,x)-K_s(y,x')| 	\lesssim \frac{\rho(x,y)^{\frac{s}{\gamma}}}{V(x,y)}\big( \frac{\rho(x,x')}{\rho(x,y)}\big)^{\ep}.
\end{equation}
 \end{enumerate}
\end{proposition}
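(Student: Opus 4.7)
Starting from \eqref{L(-s)}, the kernel admits the integral representation
\begin{equation*}
K_s(x,y) = \frac{1}{\Gamma(s)} \int_0^\infty p_t(x,y)\, t^{s-1}\, dt.
\end{equation*}
Writing $r := \rho(x,y)$, my plan is first to rescale time via $t = r^{1/\gamma}\tau$, which converts the argument $\rho(x,y)/t^\gamma$ of the auxiliary functions $\Phi_j$ into $\tau^{-\gamma}$ and pulls out a clean overall factor $r^{s/\gamma}$, and then to substitute $w = \tau^{-\gamma}$ to align the integrand with the integrability conditions \eqref{Phi_1} and \eqref{Phi_2}. After these two substitutions (whose order reversal cancels the minus sign from $dw/d\tau$), each of the three parts of the proposition reduces to estimating
\begin{equation*}
\frac{r^{s/\gamma}}{\gamma\,\Gamma(s)} \int_0^\infty \frac{\Psi(w)}{V(x,\, r/w)}\, w^{-s/\gamma}\, \frac{dw}{w},
\end{equation*}
with $\Psi \in \{\Phi_1,\, \Phi_2,\, w^{\ep}\Phi_3\}$ according to part (i), (ii), or (iii).

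For (i), the next step is to compare $V(x, r/w)$ with $V(x,r) = V(x,y)$, splitting the integral at $w = 1$. On $[1,\infty)$, the upper-dimension bound \eqref{dD} applied with radii $r/w \leq r$ yields $V(x,r)/V(x, r/w) \leq C_\mu w^D$, while on $(0,1]$, the lower-dimension bound \eqref{dD'} applied with radii $r \leq r/w$ yields $V(x,r)/V(x, r/w) \leq \widetilde C_\mu^{-1} w^{d}$. Factoring out $1/V(x,y)$, the claim \eqref{GP1} reduces exactly to finiteness of the two integrals constituting \eqref{Phi_1}. Part (iii) is handled by the same argument with $\Psi = w^{\ep}\Phi_3$ and the extra prefactor $(\rho(x,x')/r)^{\ep}$ pulled outside using $(\rho(x,x')/t^\gamma)^{\ep} = (\rho(x,x')/r)^{\ep} w^{\ep}$; one then reads off condition \eqref{Phi_2} from the resulting exponents. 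Both estimates in \eqref{GP2} follow at once from the symmetry $p_t(x,y) = p_t(y,x)$ of the heat kernel, which transfers to $K_s$.

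For the lower bound (ii), I will use only a small portion of the integral: since $\Phi_2$ is non-negative and non-zero on a set of positive Lebesgue measure, I can fix an interval $[a,b] \subset (0,\infty)$ and a constant $k > 0$ so that $E := \{w \in [a,b] : \Phi_2(w) > k\}$ has positive Lebesgue measure. For $w \in E$, the comparisons $V(x, r/w) \sim V(x, r)$ and $w^{-s/\gamma - 1} \sim 1$ hold with constants depending only on $a, b, s, \gamma$, so restricting the integral to $E$ and using $\Phi_2 > k$ there immediately yields \eqref{GP1'}. The main obstacle is minor: essentially bookkeeping to ensure that the dimension bounds are invoked on the correct regime of $w$ and that the sign from the order-reversing substitution is handled correctly; the computation itself is mechanical once the substitution framework is in place.
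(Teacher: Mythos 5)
Your proposal is correct and, after unwinding your two substitutions (whose composite is exactly $w=\rho(x,y)/t^{\gamma}$, i.e.\ the paper's $u=t^{\gamma}$ followed by $w=\rho(x,y)/u$), it follows essentially the same route as the paper: split at $w=1$ and invoke the upper/lower dimension bounds on the appropriate regimes to reduce to \eqref{Phi_1} and \eqref{Phi_2}. The only (harmless) variation is in part (ii), where you localize to a compact level set $E\subset[a,b]$ with $\Phi_2>k$, whereas the paper keeps the two halves of the integral and observes that at least one of them carries a positive constant; both arguments are valid.
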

\begin{proof}
By the doubling and reverse doubling property of the measure $V$, we consider the following two cases. For $0< u\leq  \rho(x,y)$,   
\begin{equation}\label{eq5.11}
		\big(\frac{\rho(x,y)}{u}\big)^{d}
		\lesssim \frac {V(x,y)}{V (x,u)} 
		=\frac {V(x,\rho(x,y))}{V (x,u)}
	\lesssim  
	\big(\frac{\rho(x,y)}{u}\big)^{D},
	\end{equation}
and for $u\ge \rho(x,y)$,  
\begin{equation}\label{eq5.11'}
		\big(\frac{\rho(x,y)}{u}\big)^{D}
		\lesssim \frac {V(x,y)}{V (x,u)} 
		=\frac {V(x,\rho(x,y))}{V (x,u)}
	\lesssim  
	\big(\frac{\rho(x,y)}{u}\big)^{d}.
	\end{equation}	 
By \eqref{p_t2} and a change of variable $u=t^{\gamma}$, we have for all $x,y \in X$,
\begin{equation}\label{8.15-}
	\begin{aligned}
K_{s}(x,y)
&=\frac{1}{\Gamma(s)}\int_{0}^{\infty}p_t(x,y)\frac{dt}{t^{1-s}}
= \frac{1}{\gamma \cdot \Gamma(s)}  \int_0^{\infty}
p_{u^{1/\gamma}}(x,y) \frac{du}{u^{1-\frac{s}{\gamma}}}\\
& \sim \Big(\int_0^{\rho(x,y)}+ \int_{\rho(x,y)}^{\infty}\Big)
p_{u^{1/\gamma}}(x,y) \frac{du}{u^{1-\frac{s}{\gamma}}}\\
&=: \uppercase\expandafter{\romannumeral1}+ 
\uppercase\expandafter{\romannumeral2},
	\end{aligned}
\end{equation}
and for $\rho(x,x')<(2A_0)^{-1}\rho(x,y)$,
\begin{equation}
	\begin{aligned}
	&	|K_s(x,y)-K_s(x',y)|+|K_s(y,x)-K_s(y,x')| \\
	&\lesssim  \int_{0}^{\infty} |p_t(x,y)-p_t(x',y)|+|p_t(y,x)-p_t(y,x')| \frac{dt}{t^{1-s}}\\
	& \lesssim \rho(x,x')^{\ep} \int_0^{\infty}\frac{1}{V(x,t^{\gamma})}
\Phi_3\big(\frac{\rho(x,y)}{t^{\gamma}}\big) \frac{dt}{t^{1+\ep\gamma-s}}\\
& \sim  \rho(x,x')^{\ep}\int_0^{\infty}
\frac{1}{V(x,u)}
\Phi_3\big(\frac{\rho(x,y)}{u}\big) \frac{du}{u^{1+\ep-\frac{s}{\gamma}}}\\
& = \rho(x,x')^{\ep}\Big(\int_0^{\rho(x,y)}+ \int_{\rho(x,y)}^{\infty}\Big)
\frac{1}{V(x,u)}
\Phi_3\big(\frac{\rho(x,y)}{u}\big) \frac{du}{u^{1+\ep-\frac{s}{\gamma}}}\\
& =: \uppercase\expandafter{\romannumeral3}+ 
\uppercase\expandafter{\romannumeral4}.
	\end{aligned}
\end{equation}
To proceed, we estimate the above terms $ \uppercase\expandafter{\romannumeral1} $-$ \uppercase\expandafter{\romannumeral4} $. 

First,
\begin{equation}
\begin{aligned}	
\uppercase\expandafter{\romannumeral1}
& \lesssim \int_0^{\rho(x,y)} 
      \frac{1}{V(x,u)}\Phi_1\!\left(\frac{\rho(x,y)}{u}\right) 
      \frac{du}{u^{1-\frac{s}{\gamma}}}  \quad \quad \quad \quad \quad \text{by}\, \eqref{p_t1}\\
 &\lesssim \frac{{\rho(x,y)}^D}{V(x,y)}
   \int_0^{\rho(x,y)} 
      \Phi_1\!\left(\frac{\rho(x,y)}{u}\right) 
      \frac{du}{u^{1+D-\frac{s}{\gamma}}} \,\quad \quad \quad \text{by}\, \eqref{eq5.11}\\
& =\frac{\rho(x,y)^D}{V(x,y)} 
\int_1^{\infty} 
   \Phi_1(w)\,
   \bigl(w^{-1} \rho(x,y)\bigr)^{\frac{s}{\gamma}-D}\,
   \frac{dw}{w}\qquad\quad\text{by }w=\frac{\rho(x,y)}{u}\\
&= \frac{\rho(x,y)^{\frac{s}{\gamma}}}{V(x,y)}
   \int_1^{\infty} 
      \Phi_1(w)\,
      w^{D-\frac{s}{\gamma}}\,
      \frac{dw}{w}
      \lesssim \frac{\rho(x,y)^{\frac{s}{\gamma}}}{V(x,y)} \qquad\text{by \eqref{Phi_1}}.
\end{aligned}
\end{equation}

Similarly,
\begin{equation}
\begin{aligned}	
\uppercase\expandafter{\romannumeral2}
 &\lesssim 
   \int_{\rho(x,y)}^{\infty} 
      \frac{1}{V(x,u)}\Phi_1\!\left(\frac{\rho(x,y)}{u}\right) 
      \frac{du}{u^{1-\frac{s}{\gamma}}}  \qquad \qquad  \text{by}\, \eqref{p_t1}\\
 &\lesssim \frac{{\rho(x,y)}^d}{V(x,y)}
   \int_{\rho(x,y)}^{\infty} 
      \Phi_1\!\left(\frac{\rho(x,y)}{u}\right) 
      \frac{du}{u^{1+d-\frac{s}{\gamma}}}  \qquad \qquad \text{by}\, \eqref{eq5.11'}\\
& =\frac{\rho(x,y)^d}{V(x,y)} 
\int_0^1 
   \Phi_1(w)\,
   \bigl(w^{-1} \rho(x,y)\bigr)^{\frac{s}{\gamma}-d}\,
   \frac{dw}{w}\qquad\quad\text{by }w=\frac{\rho(x,y)}{u}
   \\
&= \frac{\rho(x,y)^{\frac{s}{\gamma}}}{V(x,y)}
   \int_0^1 
      \Phi_1(w)\,
      w^{d-\frac{s}{\gamma}}\,
      \frac{dw}{w}
\lesssim \frac{\rho(x,y)^{\frac{s}{\gamma}}}{V(x,y)}     \qquad\text{by \eqref{Phi_1}},
\end{aligned}
\end{equation}
and 
\begin{equation}\label{8.20+}
\begin{aligned}
	\uppercase\expandafter{\romannumeral3}
	&\lesssim \frac{\rho(x,y)^{\frac{s}{\gamma}}}{V(x,y)}\big( \frac{\rho(x,x')}{\rho(x,y)}\big)^{\ep}
   \int_1^{\infty} 
      \Phi_3(w)w^{D+\ep-\frac{s}{\gamma}}\,
      \frac{dw}{w}
       \lesssim \frac{\rho(x,y)^{\frac{s}{\gamma}}}{V(x,y)}\big( \frac{\rho(x,x')}{\rho(x,y)}\big)^{\ep},
      \end{aligned}
\end{equation}
and 
\begin{equation}\label{8.20-}
\begin{aligned}
	\uppercase\expandafter{\romannumeral4}
	&\lesssim \frac{\rho(x,y)^{\frac{s}{\gamma}}}{V(x,y)}
	\big( \frac{\rho(x,x')}{\rho(x,y)}\big)^{\ep}
   \int_0^1 \Phi_3(w)
      w^{d+\ep-\frac{s}{\gamma}}\,
      \frac{dw}{w}
   \lesssim \frac{\rho(x,y)^{\frac{s}{\gamma}}}{V(x,y)}
	\big( \frac{\rho(x,x')}{\rho(x,y)}\big)^{\ep},
      \end{aligned}
\end{equation}
by \eqref{Phi_2} in the last steps of both \eqref{8.20+} and \eqref{8.20-}. This completes the proof of \eqref{GP1} and \eqref{GP2}.

It remains to consider the lower bound in \eqref{GP1'}. First,
\begin{align*}	
\uppercase\expandafter{\romannumeral1}
   & \gtrsim \int_0^{\rho(x,y)} 
      \frac{1}{V(x,u)}\Phi_2\!\left(\frac{\rho(x,y)}{u}\right) 
      \frac{du}{u^{1-\frac{s}{\gamma}}}  \quad \quad \quad \quad \quad \text{by}\, \eqref{p_t1'}\\
 &\gtrsim \frac{{\rho(x,y)}^d}{V(x,y)}
   \int_0^{\rho(x,y)} 
      \Phi_2\!\left(\frac{\rho(x,y)}{u}\right) 
      \frac{du}{u^{1+d-\frac{s}{\gamma}}}  \, \quad \quad \quad \text{by}\, \eqref{eq5.11}\\
& =\frac{\rho(x,y)^d}{V(x,y)} 
\int_1^{\infty} 
   \Phi_2(w)\,
   \bigl(w^{-1} \rho(x,y)\bigr)^{\frac{s}{\gamma}-d}\,
   \frac{dw}{w}\qquad\text{by }w=\frac{\rho(x,y)}{u}\\
&= \frac{\rho(x,y)^{\frac{s}{\gamma}}}{V(x,y)}
   \int_1^{\infty} 
      \Phi_2(w)\,
      w^{d-\frac{s}{\gamma}}\,
      \frac{dw}{w}
      \gtrsim \frac{\rho(x,y)^{\frac{s}{\gamma}}}{V(x,y)},
\end{align*}     
assuming, in the last step, that $\Phi_2$ is non-zero in a subset of $[1,\infty)$ of positive measure.

Similarly,
\begin{align*}	
\uppercase\expandafter{\romannumeral2}
   & \gtrsim \int_{\rho(x,y)}^{\infty} 
      \frac{1}{V(x,u)}\Phi_2\!\left(\frac{\rho(x,y)}{u}\right) 
      \frac{du}{u^{1-\frac{s}{\gamma}}}  \quad \quad \quad \quad \quad \text{by}\, \eqref{p_t1'}\\
 &\gtrsim \frac{{\rho(x,y)}^D}{V(x,y)}
   \int_{\rho(x,y)}^{\infty} 
      \Phi_2\!\left(\frac{\rho(x,y)}{u}\right) 
      \frac{du}{u^{1+D-\frac{s}{\gamma}}}  \, \quad \quad \quad \text{by}\, \eqref{eq5.11'}\\
& =\frac{\rho(x,y)^D}{V(x,y)} 
\int_0^1 
   \Phi_2(w)\,
   \bigl(w^{-1} \rho(x,y)\bigr)^{\frac{s}{\gamma}-D}\,
   \frac{dw}{w}\qquad\text{by }w=\frac{\rho(x,y)}{u}\\
&= \frac{\rho(x,y)^{\frac{s}{\gamma}}}{V(x,y)}
   \int_0^1 
      \Phi_2(w)\,
      w^{D-\frac{s}{\gamma}}\,
      \frac{dw}{w}
       \gtrsim \frac{\rho(x,y)^{\frac{s}{\gamma}}}{V(x,y)}, 
\end{align*}
assuming, in the last step, that $\Phi_2$ is non-zero in a subset of $[0,1]$ of positive measure.

Since $\Phi_2$ is non-zero in a subset of $[0,\infty)$ of positive measure, at least one the two lower bounds above is valid. Summing up, and noting that both terms are certainly non-negative, it follows that
\begin{equation*}
	K_{s}(x,y) = \uppercase\expandafter{\romannumeral1}+ \uppercase\expandafter{\romannumeral2}\gtrsim \frac{\rho(x,y)^{\frac{s}{\gamma}}}{V(x,y)},
\end{equation*}
which complete the proof of \eqref{GP1'}.
\end{proof}

\begin{remark}\label{R8.8}
We now present some examples of functions to illustrate Proposition \ref{GP}. These examples are used to provide intuition for the proposition and will be applied in the next section.
If $\Phi_1$ is bounded on $[0,1]$, then for any $0<s<\gamma d$,  
$$\int_0^1 \Phi_1(w)\, w^{d-\frac{s}{\gamma}}\frac{dw}{w}
\lesssim
 \int_0^1 w^{d-\frac{s}{\gamma}}\frac{dw}{w}
\sim
 1,
 $$
 If the function $u^{a}\, \Phi_1(u)$ is bounded on $[1,\infty)$ for some $a>D-\frac{s}{\gamma}$, then
 $$ \int_1^{\infty}    \Phi_1(w)\, w^{D-\frac{s}{\gamma}}\frac{dw}{w} 
= \int_1^{\infty}    \Phi_1(w)w^a\cdot  w^{D-a-\frac{s}{\gamma}}\frac{dw}{w} 
\lesssim  \int_1^{\infty}  w^{D-a-\frac{s}{\gamma}}\frac{dw}{w} \sim 1.
 $$
Similarly, if the function $\Phi_3$ is bounded on $[0,1]$ and $u^{\widetilde{a}}\,\Phi_3(u)=0$ is bounded on $[1,\infty)$ for some $\widetilde{a}>D+\ep-\frac{s}{\gamma}$, then for any $0<s<\gamma (d+\ep)$, the function $\Phi_3$ satisfies \eqref{Phi_2}.

In particular, if $\Phi_j(u)=\exp(-c_j u^{d_j})$ with $c_j,d_j>0$, then the conclusions of Proposition \ref{GP} are valid for all $s\in(0,\gamma d)$.
\end{remark}

\section{Application to fractional Bessel operators}\label{sec:Bessel}

In this section, we apply our main results to the fractional Bessel operator. We first recall the (non-fractional) Bessel operator from Huber \cite{Hu}. For $n\ge 0$ and $\lambda>0$, the $(n+1)$-dimension Bessel operator $\Delta_{\lambda}^{(n+1)}$ on $\mathbb{R}^{n+1}_+  :=\mathbb{R}^n \times (0,\infty) $ is defined by \eqref{eq5.1}. When $n=0$, we write $\mathbb{R}_+:=(0,\infty)$.
The operator $-\Delta_{\lambda}^{(n+1)} $ is symmetric and non-negative in $L^2(\mathbb{R}^{n+1}_+,dm_{\lambda}^{{(n+1)}})$, where 
$$dm_{\lambda}^{{(n+1)}}(x) := x_{n+1}^{2\lambda}dx.$$ 
For simplicity, we write $\Delta_{\lambda}:= \Delta_{\lambda}^{(n+1)} $ and $m_{\lambda}:= m_{\lambda}^{{(n+1)}} $ when the dimension is clear from the context.

In \cite{BDLL}, Betancor et al. gave the kernel estimates of the one-dimensional fractional Bessel operator $(-\Delta_{\lambda}^{(1)} )^{-\alpha/2}$ on $(0,\infty)$ with parameters $n=0$, $\lambda>0$ and $0<\alpha<1+2\lambda$.  
In this section, we extend their result to the corresponding fractional Bessel operator $(-\Delta_{\lambda}^{(n+1)})^{-\alpha/2}$ for all $n\ge 0$, $\lambda>0$ and $0<\alpha<n+1+2\lambda$. Taking $\mathcal{L}=-\Delta_{\lambda}$ and $s=\alpha/2$ in \eqref{L(-s)}, the fractional Bessel operator is given by
\begin{equation}\label{Fr-B}
(-\Delta_{\lambda})^{-\alpha/2} f(x)=\frac{1}{\Gamma(\alpha/2)}\int_{0}^{\infty}e^{t\Delta_{\lambda}}(f)(x)\frac{dt}{t^{1-\alpha/2}},\quad \forall x \in \mathbb{R}^{n+1}_+ .
\end{equation}

\begin{corollary}\label{PB2}
Let $n\ge 0$, $\lambda>0$ and $0<\alpha<n+1$. Let $(-\Delta_{\lambda})^{-\alpha/2} $ be the fractional Bessel operator in $L^2(\mathbb{R}^{n+1}_+,\abs{x-y},m_{\lambda}) $. Let $ S^p:=S^p(L^2(dm_\lambda^{(n+1)})) $ and $\widetilde{B}_{p}(m_{\lambda}):= \widetilde{B}_{p}^{\alpha}(m_{\lambda}^{(n+1)}) $ be defined as in \eqref{b2 variant} with $(X,\rho,\mu)=(\mathbb{R}^{n+1}_+,\abs{x-y},m_{\lambda})$.
Then the following conclusions hold for all $b \in L_{\mathrm{loc}}^1(\mathbb{R}^{n+1}_+)$:
\begin{enumerate}[\rm(1)]

\item\label{C8.2:p>d variant} If $p\in [2,\infty) $ and $\alpha \in (0,\frac{n+1}{p})$, then $[b, (-\Delta_{\lambda})^{-\alpha/2}]\in S^p$ if and only if $b \in \widetilde{B}_{p}^{\alpha}(m_{\lambda}) $.
  
\item\label{C8.3:p<d variant} If $p \in (1,2)$ and $\alpha \in  (\frac{n+1}{p}-1, \frac{n+1}{p} )$, then $[b, (-\Delta_{\lambda})^{-\alpha/2}]\in S^p$ only if $b \in \widetilde{B}_{p}^{\alpha}(m_{\lambda}) $.

\item\label{C8.3:const variant} If $p\in(0,n+1)$ and $\alpha \in (0,n) \cap (0,\frac{n+1}{p}-1]$,
then $[b, (-\Delta_{\lambda})^{-\alpha/2}]\in S^p$ if and only if $b$ is constant.
\end{enumerate}
\end{corollary}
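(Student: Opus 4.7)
The plan is to derive all three statements as direct consequences of Corollary~\ref{C}, applied to $(X,\rho,\mu) = (\mathbb{R}^{n+1}_+, |\cdot|, dm_\lambda)$ with lower dimension $d = n+1$, parameter $\epsilon = \alpha/(n+1) \in (0,1)$, and $\phi(x,y) = |x-y|^\alpha = \rho(x,y)^\alpha$. Under this correspondence, $d\epsilon = \alpha$, so $T_{d\epsilon} = \widetilde{T}_\alpha$ with $\widetilde{T}_\alpha = (-\Delta_\lambda)^{-\alpha/2}$, and $\widetilde{B}_{p}^{\alpha}(m_\lambda)$ is precisely the target space appearing in Corollary~\ref{C}.

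First, I would verify the three structural hypotheses of Corollary~\ref{C}. That $(\mathbb{R}^{n+1}_+, |\cdot|, dm_\lambda)$ is a space of homogeneous type of lower dimension $n+1$ satisfying the $(1,1)$-Poincar\'e inequality (and hence, via Remark~\ref{R1.8}, the $(1,q)$-Poincar\'e inequality for every $q \geq 1$, which covers the exponent $q = \tfrac{d}{1+d\epsilon} = \tfrac{n+1}{1+\alpha}$ required in Corollary~\ref{C}\eqref{C:const}) is Proposition~\ref{Bessel}, originally \cite[Proposition 4.2]{H2}. That $(-\Delta_\lambda)^{-\alpha/2}$ is a strongly non-degenerate $\rho^\alpha$-fractional integral operator is Proposition~\ref{P5.5}, whose proof is the substantive content of Section~\ref{sec:Bessel}.

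With these verifications in hand, each case of Corollary~\ref{PB2} corresponds to the matching case of Corollary~\ref{C} under the parameter dictionary $\epsilon = \alpha/(n+1)$. For case \eqref{C8.2:p>d variant}, the condition $\alpha \in (0,\tfrac{n+1}{p})$ translates to $\epsilon \in (0,\tfrac{1}{p})$, placing us in Corollary~\ref{C}\eqref{C:p>d}. For case \eqref{C8.3:p<d variant}, the condition $\alpha \in (\tfrac{n+1}{p}-1,\tfrac{n+1}{p})$ translates to $\epsilon \in (\tfrac{1}{p}-\tfrac{1}{d},\tfrac{1}{p})$, matching the hypothesis of Corollary~\ref{C}\eqref{C:p<d}. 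For case \eqref{C8.3:const variant}, the conditions $\alpha \in (0,n) \cap (0,\tfrac{n+1}{p}-1]$ translate to $\epsilon \in (0, 1-\tfrac{1}{d}) \cap (0, \tfrac{1}{p}-\tfrac{1}{d}]$, placing us in Corollary~\ref{C}\eqref{C:const}. In each of these three cases the conclusion of Corollary~\ref{C} is exactly the desired equivalence or implication for $\widetilde{T}_\alpha = (-\Delta_\lambda)^{-\alpha/2}$.

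The main obstacle is thus the verification of Proposition~\ref{P5.5}, which is where the genuinely Bessel-specific work lies. The plan there is to invoke Proposition~\ref{GP} for the Bessel heat semigroup with $\gamma = \tfrac{1}{2}$ and $s = \alpha/2$, using classical Gaussian-type upper, lower and H\"older bounds for the Bessel heat kernel $p_t(x,y)$ of the form covered by Remark~\ref{R8.8} (i.e.\ with $\Phi_j(u) = \exp(-c_j u^2)$). Noting that $(\mathbb{R}^{n+1}_+, |\cdot|, dm_\lambda)$ has lower dimension $d = n+1$ and upper dimension $D = n+1+2\lambda$, the integrability conditions \eqref{Phi_1}, \eqref{Phi_2} reduce to $0 < s/\gamma = \alpha < d = n+1$, which is exactly the standing assumption. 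Proposition~\ref{GP} then supplies the two-sided bound $K_{\alpha/2}(x,y) \sim |x-y|^\alpha / V(x,y)$ together with a H\"older-type regularity estimate, and strong non-degeneracy follows from the two-sided bound via Lemma~\ref{lem:K>0}, since $\mathbb{R}^{n+1}_+$ trivially has the non-empty annulus property. For $n = 0$ the required heat kernel estimates are contained in \cite{BDLL}; the novelty in Section~\ref{sec:Bessel} is therefore the adaptation to $n \geq 1$, which I would obtain by combining the one-dimensional Bessel heat kernel bounds in the $x_{n+1}$ variable with the standard Gaussian bounds in the remaining $n$ Euclidean variables.
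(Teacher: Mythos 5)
Your proposal is correct and follows essentially the same route as the paper: verify that $(\mathbb{R}^{n+1}_+,|\cdot|,m_\lambda)$ is a space of homogeneous type with lower dimension $n+1$ satisfying the Poincar\'e inequality (Proposition \ref{Bessel}), establish the two-sided kernel bound and strong non-degeneracy of $(-\Delta_\lambda)^{-\alpha/2}$ via the heat kernel estimates and Proposition \ref{GP} (this is Proposition \ref{P5.5}), and then read off each case from the corresponding case of Corollary \ref{C} under the dictionary $\epsilon=\alpha/(n+1)$. The only cosmetic difference is that you deduce strong non-degeneracy from the two-sided bound via Lemma \ref{lem:K>0}, whereas the paper also records the $\omega$-regularity \eqref{5.10} and the non-degeneracy condition \eqref{Non-degenerate}; both routes are valid within the paper's framework.
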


We intend to apply Corollary \ref{C} to prove Corollary \ref{PB2}. (Similarly, Corollary \ref{C'} implies Corollary \ref{PB2'}, as we already sketched in the Introduction.) Therefore, we need to verify that the space $(X,\rho,\mu)=(\R_+^{n+1},|x-y|,m_\lambda)$ and the operator $\tilde T_\alpha=(-\Delta_\lambda)^{-\frac{\alpha}{2}}$ satisfy the assumptions of Corollary \ref{C}. Concerning the space, we can quote the following results:

\begin{lemma}[\cite{FLL}, Eq. (2.2)]\label{L8.2}
	Let $n\ge 0$ and $\lambda > 0$. For any $x\in \mathbb{R}^{n+1}_+ $ and $r>0$, let $B_{\mathbb{R}^{n+1}_+}(x,r)=B(x,r)\,\cap \mathbb{R}^{n+1}_+ $. Then for every $x=(x_1,\dots,x_{n+1})\in \mathbb{R}^{n+1}_+ $ and $r>0$,
\begin{equation}\label{5.2}
	V_{\lambda}(x,r):=m_{\lambda}(B_{\mathbb{R}^{n+1}_+}(x,r))\sim r^{n+1}x_{n+1}^{2\lambda}+r^{n+1+2\lambda}.
\end{equation}
\end{lemma}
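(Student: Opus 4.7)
The plan is to prove \eqref{5.2} by a direct case analysis based on whether the radius $r$ is small or large compared to the distance $x_{n+1}$ from $x$ to the boundary $\{y_{n+1}=0\}$. In each regime, one of the two summands on the right-hand side of \eqref{5.2} will dominate the other, and the weighted volume of the ball will be shown to be comparable to that dominant summand.

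In the near-boundary regime $r\leq x_{n+1}/2$, the ball $B(x,r)$ lies entirely in $\mathbb{R}^{n+1}_+$ and every point $y$ in it satisfies $y_{n+1}\in[x_{n+1}/2,\,3x_{n+1}/2]$, so $y_{n+1}^{2\lambda}\sim x_{n+1}^{2\lambda}$. Integrating gives
\begin{equation*}
  V_\lambda(x,r)=\int_{B(x,r)}y_{n+1}^{2\lambda}\,dy\sim x_{n+1}^{2\lambda}\,r^{n+1}.
\end{equation*}
Since $r\leq x_{n+1}$ here, $r^{n+1+2\lambda}\leq x_{n+1}^{2\lambda}\,r^{n+1}$, so the first summand on the right of \eqref{5.2} dominates and matches what was just computed. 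In the far regime $r>x_{n+1}/2$, the upper bound is immediate: every $y\in B(x,r)\cap\mathbb{R}^{n+1}_+$ satisfies $y_{n+1}\leq x_{n+1}+r<3r$, hence $V_\lambda(x,r)\lesssim r^{n+1}\cdot r^{2\lambda}$. For the matching lower bound, I would introduce the shifted center $z:=(x_1,\ldots,x_n,\,x_{n+1}+r/2)$; the triangle inequality yields $B(z,r/4)\subset B(x,r)$, and on this smaller ball $y_{n+1}\in[x_{n+1}+r/4,\,x_{n+1}+3r/4]\subset[r/4,\,3r]$ (using $x_{n+1}<2r$), so $y_{n+1}^{2\lambda}\sim r^{2\lambda}$ and
\begin{equation*}
  V_\lambda(x,r)\geq\int_{B(z,r/4)}y_{n+1}^{2\lambda}\,dy\gtrsim r^{2\lambda}\cdot r^{n+1}.
\end{equation*}
Since $x_{n+1}<2r$ also forces $x_{n+1}^{2\lambda}\,r^{n+1}\lesssim r^{n+1+2\lambda}$, the second summand on the right of \eqref{5.2} dominates and is comparable to $V_\lambda(x,r)$.

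The only step requiring any thought is the construction of the auxiliary ball $B(z,r/4)$ in the far regime: it is designed to sit strictly above the boundary hyperplane while still lying inside $B(x,r)$, so that the weight $y_{n+1}^{2\lambda}$ attains its maximal order $r^{2\lambda}$ on a set of Euclidean volume $\sim r^{n+1}$. A slicker alternative would be to rescale by $r$ and reduce the claim to the behavior of the single-variable function $t\mapsto\int_{B((0,\ldots,0,t),1)\cap\mathbb{R}^{n+1}_+}z_{n+1}^{2\lambda}\,dz$, which grows like $1+t^{2\lambda}$ with $t=x_{n+1}/r$; but the direct case analysis above already delivers the result with only elementary Euclidean geometry.
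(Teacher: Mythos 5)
Your argument is correct and complete. The paper itself offers no proof of this lemma --- it is quoted verbatim from \cite{FLL}, Eq.\ (2.2) --- so there is nothing in the text to compare against; your two-regime case analysis ($r\leq x_{n+1}/2$ versus $r>x_{n+1}/2$) is the standard way to verify such weighted-volume asymptotics, and each step checks out: in the near-boundary regime the weight $y_{n+1}^{2\lambda}$ is comparable to $x_{n+1}^{2\lambda}$ throughout the ball, which then lies entirely in $\mathbb{R}^{n+1}_+$, and the inequality $r^{n+1+2\lambda}\leq x_{n+1}^{2\lambda}r^{n+1}$ correctly identifies the dominant summand; in the far regime the upper bound is immediate and the auxiliary ball $B(z,r/4)$ with $z=(x_1,\ldots,x_n,x_{n+1}+r/2)$ is correctly verified to sit inside $B(x,r)$ with $y_{n+1}\sim r$ on it, giving the matching lower bound. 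The only cosmetic remark is that you could state explicitly that all implicit constants depend only on $n$ and $\lambda$ (they do, since they come from $2^{\pm 2\lambda}$, $3^{2\lambda}$, and the volume of the unit ball), which is what the uniformity over $x$ and $r$ in \eqref{5.2} requires.
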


\begin{proposition}[\cite{H2}, Proposition 4.2]\label{Bessel}
For every $n\ge 0$ and $\lambda>0$, the space $(\mathbb{R}^{n+1}_+,|x-y|,m_{\lambda}) $ is a space of homogeneous type with lower dimension $d=n+1$ and upper dimension $D=n+1+2\lambda$. Moreover, $(\mathbb{R}^{n+1}_+,|x-y|,m_{\lambda}) $ satisfies the $(1,1)$-Poincar\'e inequality.
\end{proposition}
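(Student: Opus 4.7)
My plan treats the three claims of the proposition separately, leveraging the explicit volume formula from Lemma \ref{L8.2} throughout. The doubling property is immediate from $V_\lambda(x,r) \sim r^{n+1}x_{n+1}^{2\lambda} + r^{n+1+2\lambda}$: replacing $r$ by $2r$ multiplies each summand by at most $2^{n+1+2\lambda}$. For the dimension bounds, I would analyse cases based on whether $x_{n+1}$ is $\leq r$, lies between $r$ and $R$, or is $\geq R$, where $0<r\leq R$. The two summands of $V_\lambda$ switch dominance at $r\sim x_{n+1}$, so when $x_{n+1}\geq R$ the ratio $V_\lambda(x,R)/V_\lambda(x,r)$ is of order $(R/r)^{n+1}$; when $x_{n+1}\leq r$ it is of order $(R/r)^{n+1+2\lambda}$; in the intermediate case $V_\lambda(x,R)\sim R^{n+1+2\lambda}$ while $V_\lambda(x,r)\sim r^{n+1}x_{n+1}^{2\lambda}$, and using $r\leq x_{n+1}\leq R$ produces the sandwich $(R/r)^{n+1}\lesssim \operatorname{ratio}\lesssim (R/r)^{n+1+2\lambda}$. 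Since both extreme cases are attained sharply, the exponents $d=n+1$ and $D=n+1+2\lambda$ are exactly the right ones.

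For the $(1,1)$-Poincar\'e inequality, my main strategy would be a lifting argument in the case when $k:=2\lambda+1$ is a positive integer. Consider the map $\Psi\colon\mathbb{R}^{n+k}\to\mathbb{R}^{n+1}_+$, $(y,z)\mapsto(y,|z|)$ with $z\in\mathbb{R}^k$. Its pushforward of Lebesgue measure equals $\omega_{k-1}\cdot m_\lambda$, and any Lipschitz $f$ on $\mathbb{R}^{n+1}_+$ lifts to a Lipschitz $\tilde f:=f\circ\Psi$ on $\mathbb{R}^{n+k}$, rotation-invariant in the $z$-coordinates, satisfying $\operatorname{lip}\tilde f=(\operatorname{lip}f)\circ\Psi$. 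The classical Euclidean $(1,1)$-Poincar\'e inequality applied to $\tilde f$ on an appropriate Euclidean ball then descends to the desired estimate on $(\mathbb{R}^{n+1}_+,|x-y|,m_\lambda)$, since $\Psi$-images of Euclidean balls centred on $\{z=0\}$ are comparable to half-space balls, and the general case reduces to this by translation/rescaling. For non-half-integer $\lambda>0$, one would either interpolate off the integer values or argue directly, exploiting that $x_{n+1}^{2\lambda}$ is a Muckenhoupt $A_1$-weight; this yields weighted Poincar\'e estimates of Fabes--Kenig--Serapioni type on the half-space.

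The main obstacle is the Poincar\'e inequality itself for generic $\lambda$. The doubling and dimension estimates are mechanical consequences of the explicit volume formula, whereas the Poincar\'e inequality must be proved without the clean lifting when $2\lambda+1$ is not an integer, and then requires either a delicate chaining construction through John-type subdomains or a careful appeal to the weighted Euclidean theory, with the boundary $\{x_{n+1}=0\}$ requiring particular attention. In a full write-up, rather than reproduce this foundational computation, I would defer to the relevant cited reference (\cite[Proposition 4.2]{H2}), which already carries out the verification for all $n\geq 0$ and $\lambda>0$.
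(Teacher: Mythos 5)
The paper offers no proof of this proposition at all: it is imported verbatim as \cite[Proposition~4.2]{H2} (with the volume formula quoted separately from \cite{FLL} as Lemma~\ref{L8.2}), so your closing decision to defer to that citation is exactly what the paper does, and in that sense your proposal matches the paper's route.

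Since you also sketch a self-contained argument, two comments on it. The doubling/dimension part is correct and can be compressed: from $V_\lambda(x,R)/V_\lambda(x,r)=\frac{a+b}{c+d}$ with $a/c=(R/r)^{n+1}$ and $b/d=(R/r)^{n+1+2\lambda}$, the elementary inequality $\min(a/c,b/d)\le\frac{a+b}{c+d}\le\max(a/c,b/d)$ gives both exponents at once, without the case analysis in $x_{n+1}$. For the Poincar\'e inequality, the lifting $\Psi(y,z)=(y,|z|)$ is indeed the classical device when $k=2\lambda+1\in\mathbb N$, but note that the $\Psi$-preimage of a ball $B(x,r)$ with $x_{n+1}\gg r$ is a solid torus, not a Euclidean ball; the clean split is rather: for $x_{n+1}\gtrsim r$ the weight is essentially constant on $B(x,r)$ and the unweighted Euclidean $(1,1)$-Poincar\'e inequality applies directly (for every $\lambda$), while for $x_{n+1}\lesssim r$ the preimage is comparable to a Euclidean ball and the lifting works.

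The genuine gap is in your fallback for generic $\lambda$: the claim that $x_{n+1}^{2\lambda}$ is a Muckenhoupt $A_1$-weight is false for $\lambda>0$. A weight depending only on one coordinate lies in $A_p(\mathbb R^{n+1})$ iff its one-dimensional restriction does, and $|t|^{a}\in A_1(\mathbb R)$ only for $-1<a\le 0$; for $a=2\lambda>0$ one only gets $A_p$ for $p>1+2\lambda$. Consequently the Fabes--Kenig--Serapioni route yields a $(1,p)$-Poincar\'e inequality for such $p$, not the $(1,1)$ inequality claimed here, and ``interpolating off the integer values of $k$'' is not a meaningful operation on Poincar\'e inequalities. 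The full-strength $(1,1)$ inequality for all $\lambda>0$ genuinely requires the argument carried out in \cite{H2} (or the monomial-weight Sobolev--Poincar\'e literature), so the proposition should indeed rest on that citation rather than on the sketched fallback.
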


By Remark \ref{R1.8}, $(\mathbb{R}^{n+1}_+,|x-y|,m_{\lambda}) $ also satisfies the $(1,\frac{n+1}{\alpha+1})$-Poincar\'e inequality for all $\alpha \in (0,n)$.

It remains to show the relevant conditions for the kernel of the fractional Bessel operators. We will derive these from bounds for the heat kernels associated to Bessel operators from \cite[Section 7.5]{DGK}.
\begin{enumerate}[\rm(1)]

\item If $n=0$, then $\Delta_{\lambda}= \Delta_{\lambda}^{(1)} $ and the heat kernel associated to $ \Delta_{\lambda}^{(1)} $ is 
\begin{equation}\label{eq5.2}
W_{t}^{\lambda}(x,y)=
\frac{(xy)^{-\lambda+\frac{1}{2}}}{2t}e^{-\frac{x^2+y^2}{4t}}I_{\lambda-\frac{1}{2}}(\frac{xy}{2t}),	\quad \forall x, y \in \mathbb{R}_+,
\end{equation}
 where $I_{\nu}$ is the modified Bessel function of the first kind with order $\nu>-1$ (see \cite{BHV} for more details).
  The kernel of the operator $(\Delta_{\lambda}^{(1)})^{-\alpha/2} $ is obtained by 
\begin{equation}\label{eq5.5'}
K_{\lambda,\alpha}(x,y)= \frac{1}{\Gamma(\alpha/2)}\int_{0}^{\infty} W_{t}^{\lambda}(x,y)\frac{dt}{t^{1-\alpha/2}}	.
\end{equation}
for any $x, y \in \mathbb{R}_+ $ and $x\neq y$.  

\item If $n\ge 1$, then the operator $\Delta_{\lambda} =\Delta_{\lambda}^{(n+1)}$ can be written as $\Delta_{\lambda}= \Delta^{(n)}+ \Delta_{\lambda}^{(1)} $, where $\Delta^{(n)} $ denotes the standard Laplacian on $\mathbb{R}^n$ and $\Delta_{\lambda}^{(1)} $ denotes the Bessel operator on $\mathbb{R}_+$. Hence, it is clear that $e^{-t\Delta_{\lambda}} = e^{-t\Delta^{(n)}} \cdot e^{-t\Delta_{\lambda}^{(1)}} $ and the heat kernel associated to $\Delta_{\lambda} $ is
\begin{equation}\label{eq:BesselHeat}
  K_{e^{-t\Delta_{\lambda}}}(x,y)= K_{e^{-t\Delta^{(n)}}}(x',y')\cdot W_{t}^{\lambda} (x_{n+1},y_{n+1}),
\end{equation}
for $x=(x',x_{n+1}),y=(y',y_{n+1})\in \mathbb{R}^n \times (0,\infty)$, where 
$$K_{e^{-t\Delta^{(n)}}}(x',y'):=e^{-\frac{|x'-y'|^2}{4t}} (4\pi t)^{-n/2}, $$ 
is the heat kernel of standard Laplacian and $W_{t}^{\lambda} $ is the heat kernel of $\Delta_{\lambda}^{(1)} $ as in (\ref{eq5.2}). The kernel of the operator $(\Delta_{\lambda}^{(n+1)})^{-\alpha/2} $ is obtained by \eqref{Fr-B},
\begin{equation}\label{eq5.5}
K_{\lambda,\alpha}(x,y)= \frac{1}{\Gamma(\alpha/2)}\int_{0}^{\infty} K_{e^{-t\Delta^{(n)}}}(x',y') \cdot W_{t}^{\lambda}(x_{n+1},y_{n+1})\frac{dt}{t^{1-\alpha/2}}	.
\end{equation}
for any $x=(x',x_{n+1}),y=(y',y_{n+1})\in \mathbb{R}^{n}\times (0,\infty)$ and $x\neq y$.
\end{enumerate}

Letting $n=0$ in \eqref{eq5.5}, we get \eqref{eq5.5'} by interpreting $K_{e^{-t\Delta^{(0)}}}\equiv 1$. Hence, \eqref{eq5.5} is well defined for all $n\ge 0$.

\begin{remark}\label{R8.3}
Since $ K_{e^{-t\Delta^{(n)}}}(x',y') = K_{e^{-t\Delta^{(n)}}}(y',x') $ and, by \eqref{eq5.2}, also $$W_{t}^{\lambda}(x_{n+1},y_{n+1}) = W_{t}^{\lambda}(y_{n+1},x_{n+1}),$$ it is immediate that $K_{\lambda,\alpha}(x,y) = K_{\lambda,\alpha}(y,x) $ 
for any $n\ge 0$, $x,y\in \mathbb{R}^{n+1}_+$ with $x\neq y$.  
\end{remark}

As for the case $n=0$, the size estimate and the smoothness estimate of the kernel $K_{{\lambda,\alpha}} $ associated with the fractional Bessel operator have been proved by Betancor et.al. in \cite[Lemmas 5.1, 5.2]{BDLL}. The following Proposition \ref{P5.5} extends their result to all $n\ge 0$ and gives the lower bound of the kernel $K_{{\lambda,\alpha}} $.    

\begin{proposition}\label{P5.5}
Let $n\ge 0$, $\lambda>0$ and $0<\alpha<n+1$. 
Then for all $x,y \in \mathbb{R}_+^{n+1} $ with $x \neq  y$,
\begin{equation}\label{5.9}
 K_{\lambda,\alpha}(x,y)
\sim  \frac{|x-y|^{\alpha}}{V_{\lambda}(x,y)}
:= \frac{|x-y|^{\alpha}}{m_{\lambda}(B_{\mathbb{R}^{n+1}_+}(x,|x-y|))},
\end{equation}
and for $|x-x'|<|x-y|/2$,
\begin{equation}\label{5.10}
|K_{\lambda,\alpha}(x,y)-K_{\lambda,\alpha}(x',y)|+|K_{\lambda,\alpha}(y,x)-K_{\lambda,\alpha}(y,x')|\lesssim \frac{|x-y|^{\alpha}}{V_{\lambda}(x,y)}\cdot \frac{|x-x'|}{|x-y|}.
\end{equation}
In particular, the fractional Bessel operator $(-\Delta_{\lambda})^{-\alpha/2} $ is a strongly non-degenerate metric fractional integral operator with kernel $K_{\lambda,\alpha}$ satisfying \eqref{1.1}, \eqref{1.2}, and the non-degenerate condition \eqref{Non-degenerate}, where
$\phi(x,y)=|x-y|^{\alpha}$, $V(x,y)=V_{\lambda}(x,y)$, and $\omega(t)=t$.
\end{proposition}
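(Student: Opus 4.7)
The plan is to deduce Proposition \ref{P5.5} from the general heat-kernel framework of Proposition \ref{GP}. I would apply that proposition with $\gamma = 1/2$, $s = \alpha/2$ (so $s/\gamma = \alpha$), and dimensions $d = n+1$, $D = n+1+2\lambda$ from Proposition \ref{Bessel}. The hypothesis $0 < \alpha < n+1 = d$ is precisely what is needed for the integrability conditions \eqref{Phi_1}, \eqref{Phi_2} to hold for $\Phi_j$ of Gaussian type (cf.\ Remark \ref{R8.8}); in particular \eqref{p_t2} can be used with $\epsilon = 1$, since $\alpha < d < d+\epsilon$.

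The first and main step is to obtain matching two-sided Gaussian-type bounds for the Bessel heat kernel $W_t^\lambda(u,v)$ on $(0,\infty)$. Starting from the explicit formula \eqref{eq5.2} and the classical asymptotics
\begin{equation*}
 I_\nu(z) \sim \frac{(z/2)^\nu}{\Gamma(\nu+1)}\ \text{as}\ z\to 0^+, \qquad I_\nu(z) \sim \frac{e^z}{\sqrt{2\pi z}}\ \text{as}\ z\to\infty,
\end{equation*}
I would split the analysis into the regimes $uv/(2t) \lesssim 1$ and $uv/(2t) \gtrsim 1$; combining the two cases with the Gaussian prefactor $e^{-(u^2+v^2)/4t}$ gives, with possibly different constants $c$ for the upper and lower sides,
\begin{equation*}
 W_t^\lambda(u,v) \sim \frac{1}{m_\lambda^{(1)}(B_{\R_+}(u,\sqrt t))}\exp\!\left(-c\,\frac{(u-v)^2}{t}\right),
\end{equation*}
where the volume normalisation is read off from \eqref{5.2} at $n=0$. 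By the product structure \eqref{eq:BesselHeat} and the factorisation $V_\lambda(x,r) \sim r^n \cdot m_\lambda^{(1)}(B_{\R_+}(x_{n+1},r))$ implicit in \eqref{5.2}, combining this Bessel bound with the Gaussian kernel $K_{e^{-t\Delta^{(n)}}}(x',y') \sim t^{-n/2} e^{-|x'-y'|^2/4t}$ yields heat-kernel bounds of the form \eqref{p_t1}, \eqref{p_t1'} with $\Phi_1, \Phi_2$ of Gaussian type.

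For the smoothness hypothesis \eqref{p_t2} with $\epsilon = 1$, I would differentiate the heat kernel: the derivatives in the Euclidean directions are routine Gaussian computations, while for the Bessel variable I would use the identity $\frac{d}{dz}(z^{-\nu} I_\nu(z)) = z^{-\nu} I_{\nu+1}(z)$ together with the asymptotics above to bound $|\partial_u W_t^\lambda(u,v)|$ by $t^{-1/2}$ times the Gaussian-type upper bound just established. A mean value theorem argument along a path in $\R^{n+1}_+$ then produces \eqref{p_t2} with Gaussian $\Phi_3$; Remark \ref{R8.3} reduces the second difference in \eqref{5.10} to the first. Proposition \ref{GP} now delivers both \eqref{5.9} and \eqref{5.10} with modulus $\omega(t) = t$, giving \eqref{1.1} and \eqref{1.2}. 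For the remaining claim of strong non-degeneracy, the two-sided bound \eqref{5.9} combined with Lemma \ref{lem:K>0} applies directly: the non-empty annulus property required there follows from the existence of a lower dimension $d = n+1 > 0$ in Proposition \ref{Bessel}. The non-degenerate condition \eqref{Non-degenerate} is a weaker immediate consequence.

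The main technical obstacle will be the careful handling of the Bessel heat-kernel bounds in the mixed regime where $u$ and $v$ differ substantially in magnitude relative to $\sqrt t$. There, the pointwise size estimates on $W_t^\lambda(u,v)$ do not match the target volume factor $m_\lambda^{(1)}(B_{\R_+}(u,\sqrt t))^{-1}$ without invoking the Gaussian exponential, and one must suitably shrink the constant $c$ in $e^{-c(u-v)^2/t}$ to absorb a polynomial mismatch of order $(1 + |u-v|/\sqrt t)^{2\lambda}$. For $n = 0$ these estimates are essentially those of \cite[Lemmas 5.1, 5.2]{BDLL}; the extension to $n \geq 1$ requires combining the Bessel and Euclidean heat kernels and verifying that \eqref{5.2} factorises in precisely the form needed by Proposition \ref{GP}.
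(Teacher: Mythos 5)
Your proposal is correct and follows essentially the same route as the paper: it reduces everything to Proposition \ref{GP} with $s=\alpha/2$, $\gamma=\frac12$, $d=n+1$, $D=n+1+2\lambda$, $\ep=1$, verifies the heat-kernel hypotheses via the explicit product formula and the asymptotics of $I_\nu$, and correctly identifies the key technical point of absorbing the polynomial factor $(x_{n+1}y_{n+1}/2t)^{-\lambda}$ into a slightly weakened Gaussian in the lower bound (the paper's Lemma \ref{L8.10}, Case 2), before deducing strong non-degeneracy from the two-sided bound via Lemma \ref{lem:K>0}. The only difference is presentational: the paper quotes the Gaussian upper and gradient bounds from \cite[Lemma 2.4]{FLL} (Lemma \ref{P5.3}) rather than re-deriving them, and only proves the lower bound from scratch.
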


We will derive Proposition \ref{P5.5} from related heat kernel bounds using Proposition \ref{GP}, which holds in particular for $(X,\rho,\mu)=(\mathbb{R}^{n+1}_+,|x-y|,m_{\lambda})$. This requires checking the relevant assumptions for the heat kernel associated to Bessel operators $\Delta_{\lambda}$. The relevant upper bounds have already been verified in \cite{FLL}:

\begin{lemma}[\cite{FLL}, Lemma 2.4]\label{P5.3}
Let $n\ge 0$. For all multi-indices $\beta\in \mathbb{N}^{n+1}$, there exists positive constants $C_{\beta} ,c>0$ such that for all $x,y \in \mathbb{R}_+^{n+1} $,
\begin{equation}\label{5.7}
|\partial_x^{\beta} K_{e^{-t\Delta_{\lambda}}}(x,y)| + |\partial_y^{\beta} K_{e^{-t\Delta_{\lambda}}}(x,y)| 
\leq \frac{C_{\beta}}{t^{\frac{|\beta|}{2}}V_{\lambda}(x,\sqrt t)}\exp(-c\frac{|x-y|^2}{t}).	
\end{equation}
\end{lemma}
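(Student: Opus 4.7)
The plan is to exploit the product factorization \eqref{eq:BesselHeat} of the heat kernel to separate the Euclidean and Bessel directions, and then reduce the problem to Gaussian-type derivative bounds for the one-dimensional Bessel heat kernel $W_t^\lambda$. Splitting a multi-index as $\beta = (\beta', \beta_{n+1}) \in \mathbb{N}^n \times \mathbb{N}$, the factorization gives
\[ \partial_x^\beta K_{e^{-t\Delta_\lambda}}(x,y) = \partial_{x'}^{\beta'} K_{e^{-t\Delta^{(n)}}}(x',y') \cdot \partial_{x_{n+1}}^{\beta_{n+1}} W_t^\lambda(x_{n+1},y_{n+1}). \]
The Euclidean factor obeys the elementary bound $|\partial_{x'}^{\beta'} K_{e^{-t\Delta^{(n)}}}(x',y')| \lesssim t^{-n/2-|\beta'|/2} e^{-c|x'-y'|^2/t}$, obtained by direct differentiation of the Gaussian and absorbing the polynomial factors $((x'-y')/\sqrt t)^{|\beta'|}$ into the exponential with a slightly smaller constant. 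By \eqref{5.2}, the Bessel volume factorizes as $V_\lambda(x,\sqrt t) \sim t^{n/2} \cdot V_\lambda^{(1)}(x_{n+1},\sqrt t)$, where $V_\lambda^{(1)}(u,r) \sim r u^{2\lambda} + r^{1+2\lambda}$ denotes the one-dimensional Bessel ball volume. Combined with the orthogonal decomposition $|x-y|^2 = |x'-y'|^2 + (x_{n+1}-y_{n+1})^2$, the claim therefore reduces to the one-dimensional estimate
\[ |\partial_u^k W_t^\lambda(u,v)| + |\partial_v^k W_t^\lambda(u,v)| \lesssim \frac{t^{-k/2}}{V_\lambda^{(1)}(u,\sqrt t)}\, e^{-c(u-v)^2/t} \]
for all $k \ge 0$ and all $u,v > 0$. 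The analogous bound for $\partial_y^\beta K_{e^{-t\Delta_\lambda}}$ follows by symmetry of $W_t^\lambda$ in $(u,v)$ and of the Euclidean Gaussian in $(x',y')$.

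For the 1D estimate, I would use the explicit formula \eqref{eq5.2} involving the modified Bessel function $I_{\lambda-1/2}$ and split into the regimes $uv \lesssim t$ and $uv \gtrsim t$. In the small regime, the Taylor series $I_\nu(z) = \sum_{k \ge 0} (z/2)^{\nu+2k}/(k! \Gamma(\nu+k+1))$ and termwise differentiation give the bound, with the Gaussian prefactor $e^{-(u^2+v^2)/(4t)}$ dominating $e^{-c(u-v)^2/t}$. In the large regime, the Hankel asymptotic $I_\nu(z) \sim e^z/\sqrt{2\pi z}$ (with analogous expansions for its derivatives) converts $W_t^\lambda(u,v)$ into an expression of the form $(uv)^{-\lambda}\, t^{-1/2}\, e^{-(u-v)^2/(4t)}$ whose $u$-derivatives can be controlled inductively. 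A cleaner bookkeeping uses the Bessel identity $\partial_u[u^{-\nu} I_\nu(au)] = a\, u^{-\nu} I_{\nu+1}(au)$ with $a = v/(2t)$ and $\nu = \lambda - 1/2$ to rewrite $\partial_u W_t^\lambda(u,v)$ as a combination of $W_t^{\lambda'}$-type kernels with shifted index, plus a lower-order term from differentiating the Gaussian prefactor $e^{-(u^2+v^2)/(4t)}$ that carries a factor of $u/t$ absorbed into the exponential. Iterating reduces all $k$-th derivative estimates to the already-known pointwise size estimate of Bessel heat kernels at shifted parameters.

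The main obstacle is uniform bookkeeping of constants across the two asymptotic regimes of $I_\nu$ and, especially, the behavior near $u = 0$ when $\lambda < 1/2$, where the factor $(uv)^{-\lambda+1/2}$ is singular. Fortunately, the form of $V_\lambda^{(1)}(u,\sqrt t)$ captures both the small-$u$ singular and large-$u$ regular behavior, so the final two-regime bound matches the target cleanly. Since the statement is cited as \cite[Lemma 2.4]{FLL}, the most efficient route in the paper is to quote it directly rather than reproduce the detailed Bessel-function computations outlined above.
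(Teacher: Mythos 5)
The paper offers no proof of this lemma at all: it is quoted verbatim as \cite[Lemma 2.4]{FLL}, which is exactly the route you recommend in your final sentence, so on that level your proposal and the paper agree. Your reduction is also sound as far as it goes: the factorization $K_{e^{-t\Delta_\lambda}}=K_{e^{-t\Delta^{(n)}}}\cdot W_t^\lambda$, the Gaussian derivative bounds for the Euclidean factor, the splitting $|x-y|^2=|x'-y'|^2+(x_{n+1}-y_{n+1})^2$, and the volume factorization $V_\lambda(x,\sqrt t)\sim t^{n/2}\,(\sqrt t\,x_{n+1}^{2\lambda}+t^{\lambda+\frac12})$ correctly reduce everything to the one-dimensional derivative estimate for $W_t^\lambda$.

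However, the specific mechanism you propose for that one-dimensional estimate has a genuine flaw. Writing $\nu=\lambda-\tfrac12$ and using $\partial_u[u^{-\nu}I_\nu(au)]=a\,u^{-\nu}I_{\nu+1}(au)$ with $a=v/(2t)$ gives
\[
\partial_u W_t^\lambda(u,v)=\frac{uv^2}{2t}\,W_t^{\lambda+1}(u,v)-\frac{u}{2t}\,W_t^\lambda(u,v),
\]
and if you now bound the two terms \emph{separately} by the known size estimates of $W_t^{\lambda+1}$ and $W_t^\lambda$ (your ``iterating reduces all $k$-th derivative estimates to the already-known pointwise size estimate at shifted parameters''), each term in the regime $u\sim v\sim R\gg\sqrt t$ is of size $R^{1-2\lambda}t^{-3/2}e^{-c(u-v)^2/t}$, which exceeds the target $t^{-1/2}V_\lambda^{(1)}(u,\sqrt t)^{-1}e^{-c(u-v)^2/t}\sim R^{-2\lambda}t^{-1}e^{-c(u-v)^2/t}$ by the unbounded factor $R/\sqrt t$. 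The correct bound only emerges from the cancellation between the two terms: for $uv\gg t$ one has $I_{\nu+1}(z)/I_\nu(z)\to 1$, so the leading parts $\tfrac{v}{2t}$ and $\tfrac{u}{2t}$ combine to $\tfrac{u-v}{2t}$ plus lower-order corrections, and $|u-v|t^{-1}e^{-(u-v)^2/(4t)}\lesssim t^{-1/2}e^{-(u-v)^2/(8t)}$. Capturing this requires uniform asymptotics for $I_\nu$ and $I_{\nu+1}$ (or $I_\nu'$) to first order, or an integral representation one can differentiate under the integral sign; likewise, ``differentiating the Hankel asymptotic'' is only legitimate with control of the derivative of the error term. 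So either carry out that more careful two-regime analysis, or simply do what the paper does and cite \cite[Lemma 2.4]{FLL}.
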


To obtain the required lower bound, we first present the following lemma on the modified Bessel functions $I_{\nu}$:

\begin{lemma}
For any $u>0$ and $\nu>-1$, 
the modified Bessel functions satisfy
\begin{equation}\label{eq8.36}
  I_{\nu}(u) \sim 
	\begin{cases}
	u^{\nu} &\textup{if } u\in (0,1], \\
	u^{-\2}e^{u}	 &\textup{if }  u\in [1,\infty).
\end{cases}
	\end{equation}
\end{lemma}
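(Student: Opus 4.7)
The plan is to treat the two regimes $u \in (0,1]$ and $u \in [1,\infty)$ separately, using the power series for small arguments and the classical large-argument asymptotic for the other regime.

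For $u \in (0,1]$, I would start from the absolutely convergent power series
\begin{equation*}
I_{\nu}(u) = \sum_{k=0}^{\infty} \frac{(u/2)^{\nu+2k}}{k!\,\Gamma(\nu+k+1)}.
\end{equation*}
Since $\nu > -1$, all terms are positive. The $k=0$ term equals $\frac{(u/2)^\nu}{\Gamma(\nu+1)}$, which already gives the lower bound $I_\nu(u) \gtrsim u^\nu$. For the upper bound, I would factor out $(u/2)^\nu$ and bound the remaining sum: using $\Gamma(\nu+k+1) \geq c_\nu\, k!$ for $k$ large (and finitely many small $k$ being trivially controlled), the tail is dominated by $e^{(u/2)^2} \lesssim 1$ for $u \in (0,1]$. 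Hence $I_\nu(u) \sim u^\nu$ on this range.

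For $u \in [1,\infty)$, I would quote the classical asymptotic expansion (see e.g.\ Watson, \emph{A Treatise on the Theory of Bessel Functions}, or the relevant formula in Abramowitz--Stegun)
\begin{equation*}
I_\nu(u) = \frac{e^u}{\sqrt{2\pi u}}\bigl(1 + O(u^{-1})\bigr) \quad \text{as } u \to \infty,
\end{equation*}
valid for every fixed $\nu$. This immediately gives $I_\nu(u) \sim u^{-1/2} e^u$ for $u \geq M$ with $M$ sufficiently large. On the compact interval $[1,M]$, both $I_\nu(u)$ and $u^{-1/2} e^u$ are continuous and strictly positive, hence comparable. Combining the two subranges yields the claim on $[1,\infty)$.

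The argument is essentially bookkeeping around well-known classical facts; the only real obstacle is making sure the implicit constants in both regimes can depend on $\nu$ (which is harmless here, as $\nu$ is a fixed parameter in our application), and verifying that the two asymptotic regimes glue together correctly at $u=1$. Both $u^\nu$ and $u^{-1/2}e^u$ are positive at $u=1$, and $I_\nu(1)$ is a finite positive number, so the comparability constants can be chosen uniformly across the transition. This finishes the proof.
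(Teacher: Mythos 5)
Your proof is correct and takes essentially the same route as the paper's: small-argument and large-argument asymptotics of $I_\nu$, glued by positivity and continuity on the compact middle range (the paper quotes the limits $u^{-\nu}I_\nu(u)\to 1/(2^\nu\Gamma(\nu+1))$ and $\sqrt{2\pi u}\,e^{-u}I_\nu(u)\to 1$ and then uses continuity on $[\eps,M]$, whereas you treat all of $(0,1]$ directly from the power series, a harmless variant). One small fix in the series estimate: for $-1<\nu<0$ the claimed inequality $\Gamma(\nu+k+1)\geq c_\nu\,k!$ fails, since $\Gamma(\nu+k+1)/k!\sim k^{\nu}\to 0$; but the weaker bound $\Gamma(\nu+k+1)\geq c>0$ (valid because $\Gamma$ is bounded below on $(0,\infty)$ and $\nu+k+1>0$) already yields $\sum_{k}(u/2)^{2k}/(k!\,\Gamma(\nu+k+1))\lesssim e^{(u/2)^2}\lesssim 1$, so the conclusion stands.
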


\begin{proof}
We recall the asymptotic properties of the modified Bessel function $I_{\nu}$ from \cite{BHV}, p.109. For any $\nu>-1$,
\begin{equation*}
	\lim_{u\rightarrow 0^+}u^{-\nu}I_{\nu}(u)=\frac{1}{2^{\nu}\Gamma(\nu+1)},
\end{equation*}
and 
\begin{equation*} 
	\lim_{u\rightarrow +\infty} \frac{\sqrt{2\pi u}}{e^u} \cdot I_{\nu}(u)=1.
\end{equation*}
This implies that there exist constants $0<\ep <1$ and $M>1$ such that
\begin{equation*}
  I_{\nu}(u) \sim 
	\begin{cases}
	u^{\nu} &\textup{if } u\in (0,\ep], \\
	u^{-\2}e^{u}	 &\textup{if }  u\in [M,\infty).
\end{cases}
	\end{equation*}
On the other hand, one of definitions of $I_{\nu}(u)$ for any $u>0$ and $\nu>-1$ is  
$$
I_{\nu}(u)= \sum_{n=0}^{\infty}\frac{(u/2)^{2n+\nu}}{\Gamma(n+1)\Gamma(n+\nu+1)}.
$$
This shows that the function $I_{\nu}$ is positive and continuous. Consequently, 
\begin{equation*}
	I_{\nu}(u)  \sim 1 \sim
	u^{\nu} \sim
	u^{-\2}e^{u}	\quad \quad 
	 \textup{if }u \in [\ep,M].
\end{equation*}
Hence, we get \eqref{eq8.36}.
\end{proof}

We establish the following lower bound of the heat kernel $ K_{e^{-t\Delta_{\lambda}}} $.

\begin{lemma}\label{L8.10}
Let $n \ge 0$ and $\delta>0$. For all $x,y \in \mathbb{R}_+^{n+1} $,	\begin{equation}\label{5.7'}
 K_{e^{-t\Delta_{\lambda}}}(x,y) 
\gtrsim  \frac{1}{V_{\lambda}(x,\sqrt t)} \exp\Big({-(\frac{1}{4}+\delta)\cdot\frac{|x-y|^2}{t}}\Big).	
\end{equation}
\end{lemma}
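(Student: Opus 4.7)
The plan is to reduce the claim to a one-dimensional lower bound for the Bessel heat kernel $W_t^\lambda$ via the tensorisation \eqref{eq:BesselHeat} (with the convention $K_{e^{-t\Delta^{(0)}}}\equiv 1$, which brings the case $n=0$ under the same argument). Writing $x=(x',x_{n+1})$ and $y=(y',y_{n+1})$, using the identity $|x-y|^2=|x'-y'|^2+(x_{n+1}-y_{n+1})^2$, the trivial bound $e^{-|x'-y'|^2/(4t)}\ge e^{-(1/4+\delta)|x'-y'|^2/t}$, and the volume asymptotic $V_\lambda(x,\sqrt t)\sim t^{(n+1)/2}(x_{n+1}^{2\lambda}+t^\lambda)$ from Lemma \ref{L8.2}, it suffices to prove
\begin{equation*}
  W_t^\lambda(a,b)\gtrsim \frac{1}{\sqrt t\,(a^{2\lambda}+t^\lambda)}\exp\Big(-\big(\tfrac14+\delta\big)\tfrac{(a-b)^2}{t}\Big),\qquad a,b>0,
\end{equation*}
with the relabelling $a=x_{n+1}$, $b=y_{n+1}$.

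Substituting \eqref{eq8.36} for $I_{\lambda-1/2}(u)$ with $u=ab/(2t)$ into the definition \eqref{eq5.2} and splitting into two regimes handles this. In the regime $ab\le 2t$, the asymptotic $I_{\lambda-1/2}(u)\sim u^{\lambda-1/2}$ yields
\begin{equation*}
  W_t^\lambda(a,b)\sim t^{-\lambda-1/2}e^{-(a^2+b^2)/(4t)}= t^{-\lambda-1/2}e^{-(a-b)^2/(4t)}e^{-ab/(2t)}\gtrsim t^{-\lambda-1/2}e^{-(a-b)^2/(4t)},
\end{equation*}
the last step because $e^{-ab/(2t)}\ge e^{-1}$; together with $a^{2\lambda}+t^\lambda\ge t^\lambda$, this gives the target inequality (even with $\delta=0$).

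In the regime $ab\ge 2t$, the asymptotic $I_{\lambda-1/2}(u)\sim u^{-1/2}e^u$ yields $W_t^\lambda(a,b)\sim (ab)^{-\lambda}t^{-1/2}e^{-(a-b)^2/(4t)}$, reducing the target to $(ab)^\lambda\lesssim (a^{2\lambda}+t^\lambda)\,e^{\delta(a-b)^2/t}$. If $b\le 2a$, then $(ab)^\lambda\le 2^\lambda a^{2\lambda}$, which is enough. Otherwise $b>2a$, so $|a-b|>b/2$ and $ab<b^2/2$, giving $(ab)^\lambda/t^\lambda\le (b^2/(2t))^\lambda\lesssim_\delta e^{\delta b^2/(4t)}\le e^{\delta(a-b)^2/t}$ by the elementary polynomial-versus-exponential bound $u^\lambda\lesssim_\delta e^{\delta u/4}$ valid for all $u\ge 0$.

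The main obstacle is precisely the regime $ab\ge 2t$: the exponential growth of $I_{\lambda-1/2}$ produces an algebraic factor $(ab)^{-\lambda}$ that is not controlled by $1/(a^{2\lambda}+t^\lambda)$ alone, so the excess must be absorbed into the Gaussian tail of the target bound. Paying the price of enlarging the exponent from $1/4$ to $1/4+\delta$ is exactly what makes this absorption possible, and is the structural reason for the $\delta>0$ appearing in the statement.
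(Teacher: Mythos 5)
Your proof is correct and follows essentially the same route as the paper's: both split according to the two asymptotic regimes of $I_{\lambda-1/2}(x_{n+1}y_{n+1}/(2t))$, observe that the small-argument regime gives the bound with $\delta=0$, and in the large-argument regime absorb the excess algebraic factor $(x_{n+1}y_{n+1})^{\lambda}$ into the Gaussian at the cost of the $\delta$-loss. The only (immaterial) difference is in that last absorption: the paper uses $x_{n+1}y_{n+1}\lesssim x_{n+1}^2+|x-y|^2$ directly, whereas you split into the cases $y_{n+1}\le 2x_{n+1}$ and $y_{n+1}>2x_{n+1}$ — both are valid elementary arguments.
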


\begin{proof}
We now take $r=\sqrt t$ in \eqref{5.2}, which implies that
\begin{equation}\label{8.24}
V_{\lambda}(x,\sqrt t)
\sim 	t^{\frac{n+1}{2}}(x_{n+1}^{2 \lambda}+t^{\lambda}).
\end{equation}
 By using the identity
\begin{equation*}
e^{-\frac{|x-y|^2}{4t}}= e^{-\frac{|x'-y'|^2}{4t}}\cdot e^{-\frac{|x_{n+1}-y_{n+1}|^2}{4t}} ,
\end{equation*}
we rewrite the heat kernel \eqref{eq:BesselHeat} (where $W_t^\lambda$ is given in \eqref{eq5.2}) as 
\begin{equation}\label{8.25}
\begin{aligned}
		 K_{e^{-t\Delta_{\lambda}}}(x,y) &=
		 \frac{1}{(4\pi t)^{\frac{n}{2}}}	 e^{-\frac{|x'-y'|^2}{4t}}
\frac{(x_{n+1}y_{n+1})^{-\lambda+\frac{1}{2}}}{2t}
 I_{\lambda-\frac{1}{2}}(\frac{x_{n+1}y_{n+1}}{2t})
 e^{-\frac{x_{n+1}^2+y_{n+1}^2}{4t}}\\
&\sim t^{-\frac{n+1+2\lambda}{2}} e^{-\frac{|x-y|^2}{4t}}\cdot e^{-\frac{x_{n+1}y_{n+1}}{2t}}
\big(\frac{x_{n+1}y_{n+1}}{2t}\big)^{-\lambda+\2} 
I_{\lambda-\frac{1}{2}}(\frac{x_{n+1}y_{n+1}}{2t})\\
&=: t^{-\frac{n+1+2\lambda}{2}} e^{-\frac{|x-y|^2}{4t}} D_t(x_{n+1},y_{n+1}).
\end{aligned}	
\end{equation}

{\bf{Case 1:} $\frac{x_{n+1}y_{n+1}}{2t} <1$.}
Taking $\nu=\lambda-\2 $ in \eqref{eq8.36}, we conclude that 
\begin{equation}\label{8.26}
\begin{aligned}
D_t(x_{n+1},y_{n+1})
&\sim e^{-\frac{x_{n+1}y_{n+1}}{2t}}
\big(\frac{x_{n+1}y_{n+1}}{2t}\big)^{-\lambda+\2} \,\big(\frac{x_{n+1}y_{n+1}}{2t}\big)^{\lambda-\2} \\
& = e^{-\frac{x_{n+1}y_{n+1}}{2t}} \sim 1.
\end{aligned}	
\end{equation}
Hence
\begin{equation}\label{case1}
\begin{aligned}
			 K_{e^{-t\Delta_{\lambda}}}(x,y) 
			 & \sim
			 t^{-\frac{n+1+2\lambda}{2}} e^{-\frac{|x-y|^2}{4t}}
			 \quad \quad \quad \quad \quad \text{by}\,\eqref{8.25},\eqref{8.26}\\
			 			  & \gtrsim  \frac{1}{V_{\lambda}(x,\sqrt t)} e^{-\frac{|x-y|^2}{4t}}  \quad \quad \quad \quad \,\,\, \text{by}\,\eqref{8.24}\\
			 			 &\geq \frac{1}{V_{\lambda}(x,\sqrt t)} 
			 e^{-(\frac{1}{4}+\delta)\cdot\frac{|x-y|^2}{t}}. 
\end{aligned}	
\end{equation}

{\bf{Case 2:} $\frac{x_{n+1}y_{n+1}}{2t} >1$.}
We note that
\begin{align*}
		x_{n+1}y_{n+1}
		&\leq x_{n+1}(x_{n+1}+|x_{n+1}-y_{n+1}|)
		= x_{n+1}^2+x_{n+1}|x_{n+1}-y_{n+1}|\\
		& \leq 2x_{n+1}^2+|x_{n+1}-y_{n+1}|^2\lesssim x_{n+1}^2+|x-y|^2.
\end{align*}
Hence
\begin{equation}\label{8.28'}
\begin{aligned}
	(x_{n+1}y_{n+1})^\lambda
	&\lesssim x_{n+1}^{2\lambda}+|x-y|^{2\lambda} 
	= x_{n+1}^{2\lambda}+t^\lambda\frac{|x-y|^{2\lambda}}{t^\lambda}
	\lesssim (x_{n+1}^{2\lambda}+t^\lambda) e^{\delta\frac{|x-y|^2}{t}},
\end{aligned}	
\end{equation}
using $1\leq e^x$ and $x^a\lesssim e^x$ for all $x,a>0$ in the last inequality.

Taking $\nu=\lambda-\2 $ in \eqref{eq8.36} and recalling that $\frac{x_{n+1}y_{n+1}}{2t}>1$, we conclude that 
\begin{equation}\label{8.28}
\begin{aligned}
D_t(x_{n+1},y_{n+1})
&\sim e^{-\frac{x_{n+1}y_{n+1}}{2t}}
\big(\frac{x_{n+1}y_{n+1}}{2t}\big)^{-\lambda+\2} 
\cdot e^{\frac{x_{n+1}y_{n+1}}{2t}}
\big(\frac{x_{n+1}y_{n+1}}{2t}\big)^{-\2} \\
& = \big(\frac{x_{n+1}y_{n+1}}{2t}\big)^{-\lambda}
 \gtrsim t^{\lambda}(x_{n+1}^{2\lambda}+t^\lambda)^{-1}e^{-\delta\frac{\abs{x-y}^2}{t}}.
\end{aligned}	
\end{equation}

Thus we obtain 
\begin{equation}\label{case2}
\begin{split}
			 K_{e^{-t\Delta_{\lambda}}}(x,y) 
			 &\sim t^{-\frac{n+1+2\lambda}{2}} e^{-\frac{|x-y|^2}{4t}} D_t(x_{n+1},y_{n+1})
			 \qquad\text{by \eqref{8.25}} \\
			 & \gtrsim t^{-\frac{n+1}{2}} 
			 (x_{n+1}^{2 \lambda}+t^{\lambda})^{-1} e^{-(\frac{1}{4}+\delta)\frac{|x-y|^2}{t}}\qquad\text{by \eqref{8.28}} \\
			 &\sim \frac{1}{V_{\lambda}(x,\sqrt t)} 
			 e^{-(\frac14+\delta)\cdot\frac{|x-y|^2}{t}}. 
			 \qquad\text{by}\,\eqref{8.24}
\end{split}	
\end{equation}
Hence, combining \eqref{case1} with \eqref{case2}, we obtain \eqref{5.7'}. 
\end{proof}

\textit{Proof of Proposition \ref{P5.5}:}
Choose parameters  $$s=\frac{\alpha}{2}, \, \gamma=\frac{1}{2}, \, d=n+1, \,\ep=1,\, D=n+1+2\lambda$$ and
$$\mathcal{L}=-\Delta_{\lambda},\,p_t=K_{e^{-t\Delta_{\lambda}}},\,K_s=K_{\lambda,\alpha},\, V(\cdot,\cdot):=V_{\lambda}(\cdot,\cdot)$$ in Proposition \ref{GP}. 
Taking $|\beta|=0,1$ in Lemma \ref{P5.3} and by Lemma \ref{L8.10}, we obtain that $K_{e^{-t\Delta_{\lambda}}}$ satisfies \eqref{p_t1}-\eqref{p_t2} with $\Phi_j(u)=e^{-c_ju^2} $, where $c_1=c_3=c$ is the same as in Lemma \ref{P5.3}, and $c_2=\frac14+\delta$ for some $\delta>0$.

Obviously, $\Phi_2$ is positive in a set of positive measure. By Remark \ref{R8.8}, it is easy to verify that the functions $\Phi_1,\Phi_3 $ satisfy conditions \eqref{Phi_1} and \eqref{Phi_2}, respectively. 
Then, by Proposition \ref{GP}, the kernel $K_s=K_{\lambda,\alpha}$ satisfies \eqref{GP1}, \eqref{GP1'} and \eqref{GP2}. Thus, we obtain the desired estimates \eqref{5.9} and \eqref{5.10}.

\end{document}